\documentclass[12pt,reqno]{amsart}

\usepackage[margin=1in]{geometry}
\usepackage{amsmath,amssymb,amsthm,mathtools}
\usepackage{mathrsfs}
\usepackage{enumitem}
\usepackage{hyperref}
\usepackage{microtype}
\usepackage{cite}
\numberwithin{equation}{section}
\allowdisplaybreaks

\hypersetup{colorlinks=true,linkcolor=blue,citecolor=blue,urlcolor=blue}

\newcommand{\T}{\mathbb{T}}
\newcommand{\R}{\mathbb{R}}
\newcommand{\N}{\mathbb{N}}

\newcommand{\HN}{\mathcal{H}_N}
\newcommand{\dd}{\,\mathrm{d}}

\newcommand{\eps}{\varepsilon}

\newcommand{\diag}{\mathrm{diag}}
\newcommand{\tr}{\mathrm{tr}}
\newcommand{\HS}{\mathrm{HS}}

\newcommand{\I}{\mathrm{\uppercase\expandafter{\romannumeral1}}}
\newcommand{\II}{\mathrm{\uppercase\expandafter{\romannumeral2}}}
\newcommand{\III}{\mathrm{\uppercase\expandafter{\romannumeral3}}}
\newcommand{\IV}{\mathrm{\uppercase\expandafter{\romannumeral4}}}

\newtheorem{theorem}{Theorem}[section]
\newtheorem{proposition}[theorem]{Proposition}
\newtheorem{lemma}[theorem]{Lemma}
\newtheorem{corollary}[theorem]{Corollary}
\theoremstyle{definition}
\newtheorem{definition}[theorem]{Definition}

\theoremstyle{remark}
\newtheorem{remark}[theorem]{Remark}

\title[Propagation of Chaos and Gaussian Fluctuations for Coupled Maps]{Propagation of Chaos and Gaussian Fluctuations for Mean-field Coupled Maps}

\author[Cheng]{Ruicheng Cheng}
\address[Ruicheng Cheng]{\newline School of Mathematical Sciences, \newline
Peking University, Beijing, 100871, the People's Republic of China}
\email{chengruicheng02@stu.pku.edu.cn}

\date{}

\begin{document}

\maketitle

\begin{abstract}
    We study a class of discrete-time interacting particle systems arising from mean-field coupled maps. We establish a Dobrushin-type estimate, a relative entropy estimate and Central Limit Theorem (CLT) type results for this class of systems. First we control the growth of the 1-Wasserstein distance between the empirical measures and the limiting distributions. Then we use the relative entropy method to show the propagation of chaos. Finally we consider the asymptotic behavior of the fluctuations for the empirical measures and prove that the sequence of fluctuation processes converges in distribution to some Gaussian process, where we establish both qualitative and quantitative results.
\end{abstract}

\section{Introduction}\label{sec:introduction}
We consider a system of $N$ coupled maps on the one-dimensional torus $\T=\R/\mathbb{Z}$:
\begin{equation}\label{eq:particle-system}
 x_i(t+1)
 =F\left(x_i(t)+\frac{\Delta}{N}\sum_{j=1}^N h(x_i(t),x_j(t))\right),
 \qquad i=1,\dots,N,
\end{equation}
where $F:\T\to\T$ is a continuous map, $\Delta$ is a positive constant and $h:\T\times\T\to\R$ is the coupling map. If we define the empirical measure as
\begin{equation}\label{eq:def-empirical}
    \mu_N(t):=\frac{1}{N}\sum_{j=1}^N \delta_{x_j(t)},
\end{equation}
and for $\mu\in\mathcal{P}(\T)$ we define
\begin{equation}\label{eq:def-phimu}
    \phi_{\mu}(x):=x+\Delta\int_{\T}h(x,y)\dd\mu(y),\qquad F_{\mu}:=F\circ\phi_{\mu},
\end{equation}
then \eqref{eq:particle-system} can be rewritten as
\begin{equation}\label{eq2:particle-system}
x_i(t+1)=F_{\mu_N(t)}\left(x_i(t)\right),\qquad i=1,\dots,N.
\end{equation}
From \eqref{eq2:particle-system} we obtain
\begin{equation}\label{eq:one-step-empirical}
    \mu_N(t+1)=\left(F_{\mu_N(t)}\right)_{\ast}(\mu_N(t)),
\end{equation}
where for a measurable map $G:\T\to\T$, we let $G_{\ast}:\mathcal{P}(\T)\to\mathcal{P}(\T)$ denote the push-forward map for measures induced by $G$. Thus, if we assume that $\mu_N(t)\to\bar{\rho}_t$ as $N\to\infty$, then we can formally derive the limiting equation of the system:
\begin{equation}\label{eq:mf-dynamics}
\bar{\rho}_{t+1}=\left(F_{\bar{\rho}_t}\right)_{\ast}\bar{\rho}_t.
\end{equation}\par
The equation \eqref{eq:mf-dynamics} plays the role of the mean-field equation in the classical continuous-time mean-field limit. So it is natural to ask whether the closeness of the empirical measure $\mu_N(t)$ and the limiting distribution $\bar{\rho}_t$ can be propagated over time. In this article, we establish results of this type for our system \eqref{eq:particle-system} and \eqref{eq:mf-dynamics}. First we control the growth of the $1$-Wasserstein distance $W_1(\mu_N(t),\bar{\rho}_t)$ to give a Dobrushin-type estimate. Then we derive an exponential growth for the relative entropy of the particle distribution with respect to the tensorized distribution, which implies the propagation of chaos. Finally we consider the first order fluctuation behavior of $\mu_N$ around $\bar{\rho}$. We study the asymptotic behavior of the fluctuation processes $\eta_t^N:=\sqrt{N}(\mu_N(t)-\bar{\rho}_t)$ and prove that $\eta^N$ converges in distribution to some Gaussian process $\eta$ which is a solution to the equation
\begin{equation}\label{eq:etat}
    \eta_{t+1}=\left(F_{\bar{\rho}_t}\right)_{\ast}\eta_t-\Delta F_{\ast}\left(\left((\phi_{\bar{\rho}_t})_{\ast}(\bar{\rho}_t\cdot h\ast\eta_t)\right)'\right).
\end{equation}\par
Particle systems of the form \eqref{eq:particle-system} or its variants are called globally coupled maps (GCMs), which arose as high-dimensional models of complex systems having simple equations but exhibiting various behaviors\cite{Kan89}. The operator acting on $\bar{\rho}_t$ on the right hand side of \eqref{eq:mf-dynamics}, i.e. $\mu\mapsto (F_{\mu})_{\ast}\mu$, is called the self-consistent transfer operator (STO), which was first introduced in the setting of globally coupled maps in \cite{Kan92}.\par
In the continuous-time case, people have established various types of results including the Dobrushin's estimate and Kac's chaos. One can see \cite{Gol16} for these classical results on Lipschitz kernels. Jabin and Wang\cite{JW18} came up with the relative entropy method to show the propagation of chaos for $W^{-1,\infty}$ kernels.  Besides, Wang, Zhao and Zhu\cite{WZZ23} studied the asymptotic behavior of the fluctuation processes $\eta^N$ and showed that $\eta^N$ converges in distribution to a generalized Ornstein-Uhlenbeck process.\par
\subsection{Assumptions}
To state our main results, we list our assumptions as follows:\\
\textbf{(A1)-Regularities of the maps.}\ \ Since we need different regularities of $F$ and $h$ for different results, this assumption is split into three cases:
\begin{enumerate}[label=\textbf{(A1.\arabic*)},left=1em]
    \item The map $F:\T\to\T$ is Lipschitz and $h\in C^1(\T\times\T)$;
    \item The map $F:\T\to\T$ is continuous and $h\in C^3(\T\times\T)$;
    \item The map $F:\T\to\T$ is $C^3$ and $h\in C^3(\T\times\T)$. Furthermore, there exists a constant $c>0$, such that $|F'(x)|\geq c$ for any $x\in\T$.
\end{enumerate}
\textbf{(A2)-Regularity of the solution $\bar{\rho}_t$.}\ \ Let $\bar{\rho}_t,\,t=0,1,\dots,T$ be a strictly positive $C^2$ solution to \eqref{eq:mf-dynamics}, which means there exist constants $\kappa,K>0$, such that
$$\bar{\rho}_t\geq\kappa,\quad\|\bar{\rho}_t\|_{C^2}\leq K,\quad t=0,1,\dots,T.$$
\textbf{(A3)-Independent and identically distributed initial data.}\ \ The initial data $x_i(0),\ i\geq 1$ are i.i.d. random variables on $\T$, with common distribution $\bar{\rho}_0$. It is well-known that under this i.i.d assumption, there exists a centered Gaussian field $\eta_0$, which is a random element on the space of tempered distributions $\mathcal{S}'(\T)$, such that the sequence $\{\eta_0^N\}_{N=1}^{\infty}$ converges in distribution to $\eta_0$ in $\mathcal{S}'(\T)$.\par
We make a remark here on the centered Gaussian field $\eta_0$ in the assumption \textbf{(A3)}. Actually, for any test function $\varphi\in C^{\infty}(\T)$, by the standard Central Limit Theorem, we have
$$\left\langle\varphi,\eta_0^N\right\rangle=\frac{1}{\sqrt{N}}\sum_{i=1}^{N}\left[\varphi(x_i(0))-\langle\varphi,\bar{\rho}_0\rangle\right]\to\mathcal{N}\left(0,\langle\varphi^2,\bar{\rho}_0\rangle-\langle\varphi,\bar{\rho}_0\rangle^2\right)$$
in distribution as $N\to\infty$, where $\mathcal{N}(0,a)$ denotes the centered Gaussian distribution on $\R$ with variance $a$. Hereafter we use the bracket $\langle\cdot,\cdot\rangle$ as a shorthand notation for integration. Thus the Gaussian field $\eta_0$ can be characterized by
\begin{equation}\label{eq:Gaussian}
    \langle\varphi,\eta_0\rangle\sim\mathcal{N}\left(0,\langle\varphi^2,\bar{\rho}_0\rangle-\langle\varphi,\bar{\rho}_0\rangle^2\right),\ \varphi\in C^{\infty}(\T).
\end{equation}
\subsection{Main results}
In this subsection we give the full statements of the main results.\par
Our first main result is a Dobrushin-type estimate, which shows that the closeness of the empirical measure $\mu_N$ and the limiting distribution $\bar{\rho}$ can be propagated over time in the sense of $1$-Wasserstein distance.
\begin{theorem}\label{thm:Dobrushin}
    Under the assumption \textup{\textbf{(A1.1)}}, let $\bar{\rho}_t,t=0,1,\dots$ be a solution to \eqref{eq:mf-dynamics}. Then there exists some constant $C>0$, depending on $\Delta,\mathrm{Lip}(F)$ and $\Vert h\Vert_{C^1}$, such that
    \begin{equation}\label{eq:Dobrushin}
        W_1(\mu_N(t),\bar{\rho}_t)\leq\mathrm{e}^{Ct}W_1(\mu_N(0),\bar{\rho}_0),\qquad t=0,1,\dots,
    \end{equation}
    where $W_1$ denotes the $1$-Wasserstein distance.
\end{theorem}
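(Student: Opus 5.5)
The plan is to prove a one-step contraction-type estimate
\[
W_1\bigl((F_\mu)_\ast\mu,(F_\nu)_\ast\nu\bigr)\le \mathrm{e}^{C}\,W_1(\mu,\nu)
\]
for arbitrary $\mu,\nu\in\mathcal{P}(\T)$. Applying it with $\mu=\mu_N(t)$ and $\nu=\bar\rho_t$ and using \eqref{eq:one-step-empirical} and \eqref{eq:mf-dynamics}, induction on $t$ gives \eqref{eq:Dobrushin}. Since $\mu_N(t)$ is a deterministic function of the initial data, the estimate is purely deterministic and needs no randomness. Throughout, $W_1$ on $\T$ is taken with respect to the quotient distance $d(x,y)=\min_{k\in\mathbb Z}|x-y+k|$. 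I will use two characterizations: the coupling one, and the Kantorovich--Rubinstein duality with $1$-Lipschitz test functions.

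The one-step bound comes from splitting via the triangle inequality:
\[
W_1\bigl((F_\mu)_\ast\mu,(F_\nu)_\ast\nu\bigr)\le W_1\bigl((F_\mu)_\ast\mu,(F_\mu)_\ast\nu\bigr)+W_1\bigl((F_\mu)_\ast\nu,(F_\nu)_\ast\nu\bigr).
\]
\begin{itemize}
\item \textbf{First term.} If $\pi$ is an optimal coupling of $\mu,\nu$, then $(F_\mu\times F_\mu)_\ast\pi$ couples the push-forwards. Hence the first term is at most $\mathrm{Lip}(F_\mu)\,W_1(\mu,\nu)$.
\item \textbf{Lipschitz constant of $F_\mu$.} We have $\mathrm{Lip}(F_\mu)\le \mathrm{Lip}(F)\,\mathrm{Lip}(\phi_\mu)$. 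Since $\phi_\mu'(x)=1+\Delta\int\partial_xh(x,y)\dd\mu(y)$, this gives $\mathrm{Lip}(\phi_\mu)\le 1+\Delta\|h\|_{C^1}$, uniformly in $\mu$.
\item \textbf{Second term.} Couple $\nu$ with itself diagonally. The second term is at most
\[
\int d\bigl(F(\phi_\mu(x)),F(\phi_\nu(x))\bigr)\dd\nu(x)\le \mathrm{Lip}(F)\,\sup_x|\phi_\mu(x)-\phi_\nu(x)|.
\]
\item \textbf{Bounding $\phi_\mu-\phi_\nu$.} We have $\phi_\mu(x)-\phi_\nu(x)=\Delta\int h(x,y)\dd(\mu-\nu)(y)$. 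Since $y\mapsto h(x,y)$ is Lipschitz on $\T$ with constant at most $\|h\|_{C^1}$, Kantorovich--Rubinstein duality bounds this by $\Delta\|h\|_{C^1}W_1(\mu,\nu)$.
\end{itemize}
Combining the pieces gives the one-step factor $\mathrm{Lip}(F)(1+2\Delta\|h\|_{C^1})=:\mathrm{e}^{C}$. Here $C$ may be taken to be $\log^+$ of this quantity, adjusted to be positive, and it depends only on $\Delta$, $\mathrm{Lip}(F)$ and $\|h\|_{C^1}$.

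The main point needing care is the torus geometry: $\phi_\mu$ is a map $\T\to\T$ defined through a real-valued shift. I will check it is well defined and Lipschitz for $d$ by lifting. Take a lift $\tilde\phi_\mu(\tilde x)=\tilde x+\Delta\int h(x,y)\dd\mu(y)$ on $\R$; it is $1$-periodic-equivariant since $h$ is defined on $\T\times\T$. For $x,x'$ at distance $d(x,x')$, choose representatives with $|\tilde x-\tilde x'|=d(x,x')$. Then $d(\phi_\mu(x),\phi_\mu(x'))\le|\tilde\phi_\mu(\tilde x)-\tilde\phi_\mu(\tilde x')|\le(1+\Delta\|h\|_{C^1})d(x,x')$ by the mean value theorem along the short segment. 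Similarly, $d(\phi_\mu(x),\phi_\nu(x))\le|\phi_\mu(x)-\phi_\nu(x)|$ as real shifts.

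With these checks in place, the induction $W_1(\mu_N(t+1),\bar\rho_{t+1})\le \mathrm{e}^{C}W_1(\mu_N(t),\bar\rho_t)$ closes the proof.
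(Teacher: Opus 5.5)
Your proposal is correct and follows essentially the paper's argument: the same split into a ``same map $F_{\mu}$, different measures'' term and a ``same measure, different maps $\phi_\mu$ vs.\ $\phi_\nu$'' term, giving the identical one-step factor $\mathrm{Lip}(F)(1+2\Delta\|h\|_{C^1})$, followed by iteration. The differences are cosmetic. You bound the two pieces with couplings and the $W_1$ triangle inequality, whereas the paper uses a single Kantorovich--Rubinstein test function. You also spell out the torus lifting, which the paper leaves implicit.
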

A brief introduction to Wasserstein distances can be found in Section~\ref{sub:Wasserstein-distances}.\par
The Dobrushin-type estimate is one formulation of propagation of chaos, while another one is the Kac's chaos, which considers the weak convergence of the particle distribution $\rho_{N,t}$ of $X(t)=(x_1(t),\dots,x_N(t))$ to the tensor product $\bar{\rho}_t^{\otimes N}$. The notion of Kac's chaos was first introduced by Kac\cite{Kac56}. Sznitman\cite{Szn91} gave one of the most influential probabilistic treatments for this notion and also popularized the coupling method in the direction of mean-field limit. Jabin and Wang\cite{JW18} came up with the relative entropy method and used this method to prove the Kac's chaos for particle systems with singular kernels. We will also use the relative entropy method in this paper.
\begin{definition}\label{def:relative-entropy}
    Let $E$ be a Polish space and $\mu,\nu$ be two probability measures on $E$. The relative entropy between $\mu$ and $\nu$ is defined by
    $$H(\mu|\nu):=\begin{cases}
        \int_E\frac{\dd\mu}{\dd\nu}\log{\frac{\dd\mu}{\dd\nu}}\dd\nu,\quad&\textup{if }\mu\ll\nu,\\
        \infty,&\textup{otherwise},
    \end{cases}$$
    where $\frac{\dd\mu}{\dd\nu}$ is the Radon-Nikodym derivative of $\mu$ with respect to $\nu$.
\end{definition}
Let $\rho_{N,t}$ be the distribution of $X(t)=(x_1(t),\dots,x_N(t))$ and $\bar{\rho}_t$ be a solution to \eqref{eq:mf-dynamics}. Since both $\rho_{N,t}$ and $\bar{\rho}_t^{\otimes N}$ are probability measures on the $N$-dimensional torus $\T^N$, we will consider a rescaled relative entropy in this paper, which is defined by
\begin{equation}\label{eq:def-HN}
\HN(t)
:=\frac{1}{N} H\left(\rho_{N,t}\big|\bar{\rho}_t^{\otimes N}\right)
=\frac{1}{N}\int_{\T^N}\rho_{N,t}(X)
\log\frac{\rho_{N,t}(X)}{\bar{\rho}_t^{\otimes N}(X)}\dd X.
\end{equation}
Our second main result derives the exponential growth of $\HN(t)$.
\begin{theorem}\label{thm:main}
    Fix $T\in\N$. Under the assumptions \textup{\textbf{(A1.2)}} and \textup{\textbf{(A2)}}, there exists $\Delta_0>0$, such that if $\Delta<\Delta_0$, then it holds that
    \begin{equation}\label{eq:entropy-estimate}
        \HN(t)\leq\mathrm{e}^{Ct}\left(\HN(0)+\frac{1}{N}\right),\quad t=0,1,\dots,T,
    \end{equation}
    where $\Delta_0$ and $C$ depend on $\| h\|_{C^3},\kappa$ and $K$.
\end{theorem}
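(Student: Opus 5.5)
The key observation is that $F$ acts identically on both sides, so it can be removed by the data processing inequality for relative entropy. This is why \textbf{(A1.2)} asks only for continuity of $F$. Write
\[
X(t+1)=F^{\otimes N}\bigl(\Phi_N(X(t))\bigr),\qquad \Phi_N(X)_i:=x_i+\frac{\Delta}{N}\sum_{j=1}^N h(x_i,x_j),
\]
and, by \eqref{eq:mf-dynamics}, $\bar\rho_{t+1}^{\otimes N}=(F^{\otimes N})_\ast(\phi_{\bar\rho_t}^{\otimes N})_\ast\bar\rho_t^{\otimes N}$. Since relative entropy does not increase under a common measurable push-forward,
\[
H\bigl(\rho_{N,t+1}\big|\bar\rho_{t+1}^{\otimes N}\bigr)\le H\bigl((\Phi_N)_\ast\rho_{N,t}\big|(\phi_{\bar\rho_t}^{\otimes N})_\ast\bar\rho_t^{\otimes N}\bigr).
\]
The whole task is then to estimate the right-hand side by $(1+C)H(\rho_{N,t}|\bar\rho_t^{\otimes N})+C$. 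Iterating over $t=0,\dots,T-1$ and dividing by $N$ gives \eqref{eq:entropy-estimate}.

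First I would choose $\Delta_0$ small, depending on $\|h\|_{C^1}$, so that $\phi_\mu$ is a $C^2$ diffeomorphism of $\T$ for every $\mu$. I would also need $\Phi_N$ to be a diffeomorphism of $\T^N$. Its Jacobian is $I+O(\Delta)$ in operator norm, uniformly in $N$: the diagonal part is $O(\Delta)$, and the off-diagonal part $\frac{\Delta}{N}[\partial_2h(x_i,x_j)]$ has Hilbert–Schmidt norm $O(\Delta)$. A degree/homotopy argument from the identity then gives bijectivity.

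With both maps invertible, change variables $Y=\Phi_N(X)$ and write $\psi_t:=\phi_{\bar\rho_t}^{-1}$. This gives
\[
N\HN(t+1)\le N\HN(t)+\int\rho_{N,t}(X)\,\mathcal{E}(X)\dd X,
\]
where
\[
\mathcal{E}(X)=-\log\det D\Phi_N(X)+\sum_i\Bigl[\log\bar\rho_t(x_i)-\log\bar\rho_t\bigl(\psi_t(\Phi_N(X)_i)\bigr)+\log\phi_{\bar\rho_t}'\bigl(\psi_t(\Phi_N(X)_i)\bigr)\Bigr].
\]
Each bracket vanishes when $\mu_N=\bar\rho_t$. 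Set $\delta_i:=\langle h(x_i,\cdot),\mu_N-\bar\rho_t\rangle$, so that $\Phi_N(X)_i=\phi_{\bar\rho_t}(x_i)+\Delta\delta_i$. I would Taylor expand to second order in $\Delta\delta_i$; this uses $h\in C^3$ and the bounds $\|\bar\rho_t\|_{C^2}\le K$, $\bar\rho_t\ge\kappa$.

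For the determinant, factor $D\Phi_N=D(I+A)$ with $D$ diagonal. The diagonal factor contributes $\sum_i\log\bigl(1+\Delta\langle\partial_1h(x_i,\cdot),\mu_N\rangle\bigr)$; compared with $\log\phi_{\bar\rho_t}'(x_i)$, it again gives terms linear plus quadratic in fluctuations of $\mu_N$. For the other factor, $\log\det(I+A)=\tr A+O(\|A\|_{\HS}^2)$. Here $\tr A=O(\Delta)$, since its entries are $\Delta/N$ times bounded quantities, and $\|A\|_{\HS}^2=O(\Delta^2)$. This part is therefore $O(1)$ in total, and after division by $N$ it is exactly the $1/N$ term.

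So $\mathcal{E}$ reduces to three kinds of terms plus $O(1)$:
\begin{itemize}
\item linear terms $\frac1N\sum_{i,j}\varphi(x_i,x_j)$ with $\int\varphi(x,z)\bar\rho_t(z)\dd z=0$;
\item quadratic terms $\sum_i\delta_i^2$ with coefficients $O(\Delta^2)$;
\item cubic remainders, which are dominated by the quadratic ones since $|\delta_i|\le 2\|h\|_\infty$.
\end{itemize}

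For each of these I apply the Donsker–Varadhan inequality
\[
\int\rho_{N,t}\,\mathcal{E}\le\frac1\eta H\bigl(\rho_{N,t}|\bar\rho_t^{\otimes N}\bigr)+\frac1\eta\log\int\bar\rho_t^{\otimes N}e^{\eta\mathcal{E}}.
\]
The exponential moments under the tensorized law are then controlled by the Jabin–Wang large-deviation estimates. The first estimate handles $\frac1{N}\sum_{i,j}\varphi(x_i,x_j)$ with the cancellation above, bounded and uniform in $N$ for $\eta\|\varphi\|_\infty$ small. The second handles the quadratic forms $\frac1{N}\sum_{i,j,k}\varphi(x_i,x_j)\varphi(x_i,x_k)$, and needs the same smallness.

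I expect the main obstacle to be this last step. One must verify that every linear term really has the cancellation in the second variable needed by the Jabin–Wang estimate. Terms whose mean is nonzero only in the first variable are harmless, because they are i.i.d.\ sums of centered variables. One must also check that the smallness requirement in the quadratic estimate can be met with $\eta$ of order one. This is where $\Delta<\Delta_0$ is used: the quadratic coefficients carry $\Delta^2$, so for small $\Delta$ a fixed $\eta$ works. The result is the one-step bound $\HN(t+1)\le(1+C)\HN(t)+C/N$, and Gronwall in discrete time gives \eqref{eq:entropy-estimate}.
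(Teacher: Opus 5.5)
Your architecture matches the paper's. You strip off $F$ by data processing, change variables through $\Phi_N$, expand to second order, and close with the Gibbs inequality plus the two Jabin--Wang estimates. Your splitting of the determinant, which pulls out the exact diagonal $1+\Delta\langle\partial_1h(x_i,\cdot),\mu_N\rangle$ so that $\log\det(I+A)=O(1)$, is a harmless variant of the paper's.

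The gap is in the linear terms, exactly where you place the obstacle, and your resolution there is false. The Jabin--Wang estimate for $\frac1N\sum_{i,j}\varphi(x_i,x_j)$ (their Theorem 4) needs cancellation in \emph{both} variables. Cancellation in $y$ is automatic here, since every first-order term carries $\bar h_t(x,y)=h(x,y)-\int h(x,z)\bar\rho_t(z)\dd z$ or $\partial_1\bar h_t$ in the $y$-slot. The nontrivial condition is $\int\varphi(x,y)\bar\rho_t(x)\dd x=0$, and it fails for each of your three sources separately. The leftover $\sum_j g(x_j)$, with $g(y)=\int\varphi(x,y)\bar\rho_t(x)\dd x$, is not harmless. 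Its mean vanishes under $\bar\rho_t^{\otimes N}$, but you integrate against $\rho_{N,t}$, and the Gibbs step produces
\[
\frac1\lambda\log\int_{\T^N}\bar\rho_t^{\otimes N}(X)\,e^{\lambda\sum_j g(x_j)}\dd X=\frac N\lambda\log\int_{\T}\bar\rho_t\,e^{\lambda g}\dd x .
\]
This is $N$ times a positive constant whenever $g\neq 0$, by strict Jensen. So in $\HN$ you get an $N$-independent error per step, and optimizing $\lambda$ only improves it to $C\sqrt{\HN(t)}$. In fact no bound of the form $C(\HN(t)+1/N)$ can hold for such a term. Take $\rho_{N,t}=(\bar\rho_t(1+\eps g))^{\otimes N}$ with $\eps=N^{-1/2}$: the term is of order $N^{-1/2}$ per particle, while $\HN(t)$ is of order $1/N$. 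You would lose both the $1/N$ rate and the linear recursion.

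The missing idea, which is the heart of the paper's proof, is to add three first-order contributions into one kernel: the density ratio, $\log\phi'_{\bar\rho_t}$ at the moved point, and the diagonal of $-\log\det D\Phi_N$. This gives
\[
K_t(x,y)=-\Delta\Bigl[\frac{\bar\rho_t'(x)\,\bar h_t(x,y)}{\bar\rho_t(x)\,\phi_{\bar\rho_t}'(x)}-\frac{\phi_{\bar\rho_t}''(x)\,\bar h_t(x,y)}{(\phi_{\bar\rho_t}'(x))^2}+\frac{\partial_1\bar h_t(x,y)}{\phi_{\bar\rho_t}'(x)}\Bigr].
\]
Then $\bar\rho_t(x)K_t(x,y)=-\Delta\,\partial_x\bigl(\bar\rho_t(x)\bar h_t(x,y)/\phi'_{\bar\rho_t}(x)\bigr)$ is an exact derivative, so $\int K_t(x,y)\bar\rho_t(x)\dd x=0$. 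Only the sum has this property, and only then does the two-sided Jabin--Wang estimate bound the linear part by $\HN(t)+C/N$.

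A smaller point concerns the quadratic terms. The exponent should be $\lambda\Delta^2\sum_i\delta_i^2=\frac1N\sum_i\lambda\Delta^2N\delta_i^2$, an average over $i$ of the fixed-first-index quantity in Jabin--Wang's Theorem 3, handled by symmetry or Jensen. Your $\frac1N\sum_{i,j,k}$ is too large by a factor $N$.
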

Now we give the rigorous definition for the Kac's chaos and claim that Theorem~\ref{thm:main} implies the Kac's chaos.
\begin{definition}\label{def:Kacs-chaos}
     Let $E$ be a Polish space and $\rho$ be a probability measure on $E$. A sequence $P_N$ of symmetric $N$-particle probability measures on $E^N$ for all $N\geq 1$ is said to be \textit{$\rho$-chaotic} if for all $k\geq 1$, $P_{N:k}$ converges weakly to $\rho^{\otimes k}$ as $N\to\infty$, where $P_{N:k}$ is the $k$-marginal of $P_N$.
\end{definition}
\begin{corollary}\label{cor:Kacs-chaos}
    Fix $T\in\N$. Under the assumptions \textup{\textbf{(A1.2)}}, \textup{\textbf{(A2)}} and \textup{\textbf{(A3)}}, there exists $\Delta_0>0$, such that if $\Delta<\Delta_0$, then $\rho_{N,t}$ is $\bar{\rho}_t$-chaotic for $t=0,1,\dots,T$, where $\Delta_0$ and $C$ depends on $\| h\|_{C^3},\kappa$ and $K$.
\end{corollary}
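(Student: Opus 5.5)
The plan is to derive the corollary from Theorem~\ref{thm:main}, using the standard fact that a vanishing rescaled relative entropy forces chaos. The first step handles the initial time. Under \textbf{(A3)} the initial law is exactly $\rho_{N,0}=\bar{\rho}_0^{\otimes N}$, so $\HN(0)=0$. Theorem~\ref{thm:main} (whose hypotheses \textbf{(A1.2)} and \textbf{(A2)} are assumed) then gives, for $\Delta<\Delta_0$,
\[
\HN(t)\leq \frac{\mathrm{e}^{Ct}}{N}\leq \frac{\mathrm{e}^{CT}}{N}\longrightarrow 0,\qquad t=0,1,\dots,T .
\]
Symmetry of $\rho_{N,t}$ follows because the dynamics \eqref{eq:particle-system} is permutation-equivariant and the initial law is exchangeable.

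The second step passes from $N$ particles to $k$-marginals. I would use the subadditivity of relative entropy with respect to a product reference measure: for symmetric $P$ on $E^N$ and $N=mk+r$ with $0\le r<k$,
\[
m\,H(P_{N:k}\,|\,\rho^{\otimes k})\leq H(P\,|\,\rho^{\otimes N}) .
\]
This comes from the chain rule (equivalently, the superadditivity of entropy relative to a product measure) applied to the grouping of coordinates into $m$ disjoint blocks of size $k$, each having law $P_{N:k}$ by symmetry, with the remaining $r$ coordinates dropped using monotonicity under marginalization. It yields
\[
H(\rho_{N,t:k}\,|\,\bar{\rho}_t^{\otimes k})\leq \frac{N}{\lfloor N/k\rfloor}\,\HN(t)\leq 2k\,\HN(t)\to 0 .
\]

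The third step applies the Csiszár--Kullback--Pinsker inequality, $\|\rho_{N,t:k}-\bar{\rho}_t^{\otimes k}\|_{TV}\leq \sqrt{2H(\rho_{N,t:k}|\bar{\rho}_t^{\otimes k})}\to0$. Total variation convergence implies weak convergence, so $\rho_{N,t}$ is $\bar{\rho}_t$-chaotic for every $k$ and every $t\le T$.

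The only point needing care is the block subadditivity inequality. If I prefer to avoid the chain-rule argument, a fallback is the standard identity $H(P|\rho^{\otimes N})\ge \sum_{i} H(P_i|\rho)$ iterated on blocks. Alternatively, one can use the result that $\HN\to0$ implies convergence of the empirical measure to $\bar\rho_t$ in law, together with Sznitman's equivalence between such convergence and chaos. I would cite the inequality as in Jabin--Wang \cite{JW18}, where this exact deduction appears.
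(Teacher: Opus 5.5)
Your proposal is correct and follows the paper's proof: $\HN(0)=0$ by \textbf{(A3)}, Theorem~\ref{thm:main} then gives $\HN(t)\le \mathrm{e}^{Ct}/N$, superadditivity of relative entropy with respect to a product reference measure transfers this to the $k$-marginals, and Csisz\'ar--Kullback--Pinsker yields the convergence. The differences are only cosmetic. You prove a disjoint-block version of superadditivity yourself, with the sufficient constant $N/\lfloor N/k\rfloor\le 2k$ in front of $\HN(t)$, whereas the paper cites Hauray--Mischler for the sharp constant $k$. You also verify the symmetry of $\rho_{N,t}$ explicitly, and you pass from total variation to weak convergence directly rather than through $W_1$ and Proposition~\ref{prop:topology}.
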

After the results above on propagation of chaos, we want to study the behaviors of higher-order errors while using the empirical measure $\mu_N(t)$ to approximate the limiting distribution $\bar{\rho}_t$. So it is natural to consider the fluctuation processes
\begin{equation}\label{eq:def-etatN}
    \eta_t^N:=\sqrt{N}(\mu_N(t)-\bar{\rho}_t).
\end{equation}
For the first-order interacting particle system, Wang, Zhao and Zhu\cite{WZZ23} applied the relative entropy method to prove that $\eta^N$ converges in distribution to a generalized Ornstein-Uhlenbeck process. Our last result shows that for our system, the process $\eta^N=(\eta_t^N)_{0\leq t\leq T}$ converges in distribution to some Gaussian process.
\begin{theorem}\label{thm:CLT}
    Fix $T\in\N$. Under the assumptions \textup{\textbf{(A1.3)}}, \textup{\textbf{(A2)}} and \textup{\textbf{(A3)}}, assume further $\eta_0$ in the assumption \textup{\textbf{(A3)}} is characterized by
    $$\langle\varphi,\eta_0\rangle\sim\mathcal{N}\left(0,\langle\varphi^2,\bar{\rho}_0\rangle-\langle\varphi,\bar{\rho}_0\rangle^2\right),\quad\varphi\in C^{\infty}(\T),$$
    and let $\eta=(\eta_t)_{0\leq t\leq T}$ be the solution to \eqref{eq:etat} with initial datum $\eta_0$. If $\Delta<\Delta_0$, where $\Delta_0$ is the constant in Theorem~\ref{thm:main}, then the sequence $\eta_N=(\eta_t^N)_{0\leq t\leq T}$ converges in distribution to $\eta$ in the space $(H^{-\alpha}(\T))^{T+1}$ as $N\to\infty$ for every $\alpha>1/2$. Furthermore, the process $\eta$ is a Gaussian field satisfying
    $$\langle\varphi,\eta_t\rangle\sim\mathcal{N}\left(0,\langle|Q_{0,t}\varphi|^2,\bar{\rho}_0\rangle-\langle Q_{0,t}\varphi,\bar{\rho}_0\rangle^2\right),$$
    for each test function $\varphi$ and $0\leq t\leq T$. Here the time evolution operator $\{Q_{0,t}\}_{0\leq t\leq T}$ is given by $Q_{0,t}:=Q_{0,1}\circ\cdots\circ Q_{t-1,t},\ t\geq 0$, with
\begin{equation}\label{eq:def-Qtt+1}
    (Q_{t,t+1}\varphi)(y):=(\varphi\circ F_{\bar{\rho}_t})(y)+\Delta\int_{\T}(\varphi\circ F)'(\phi_{\bar{\rho}_t}(x))\bar{\rho}_t(x)h(x,y)\dd x,\quad y\in\T,\ t\geq 0.
\end{equation}
\end{theorem}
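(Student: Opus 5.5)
The proof has three parts: a linearized recursion for $\langle\varphi,\eta^N_t\rangle$, tightness in $(H^{-\alpha}(\T))^{T+1}$, and identification of the limit through the backward operator $Q$.

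\textbf{Step 1: the linearized recursion.} Fix a smooth test function $\varphi$. By \eqref{eq:one-step-empirical} and \eqref{eq:mf-dynamics},
\[
\langle\varphi,\mu_N(t+1)\rangle-\langle\varphi,\bar\rho_{t+1}\rangle=\langle\varphi\circ F_{\mu_N(t)},\mu_N(t)\rangle-\langle\varphi\circ F_{\bar\rho_t},\bar\rho_t\rangle .
\]
We expand $\varphi\circ F\circ\phi_{\mu_N(t)}$ around $\phi_{\bar\rho_t}$. Since
\[
\phi_{\mu_N(t)}(x)-\phi_{\bar\rho_t}(x)=\frac{\Delta}{\sqrt N}\int h(x,y)\,\dd\eta^N_t(y),
\]
Taylor's formula (using $F\in C^3$ from \textbf{(A1.3)}) gives
\[
\langle\varphi,\eta^N_{t+1}\rangle=\langle Q_{t,t+1}\varphi,\eta^N_t\rangle+R^N_t(\varphi),
\]
with $Q_{t,t+1}$ as in \eqref{eq:def-Qtt+1}. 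The remainder consists of two pieces. The first is $\frac{\Delta}{\sqrt N}\iint(\varphi\circ F)'(\phi_{\bar\rho_t}(x))h(x,y)\,\dd\eta^N_t(x)\,\dd\eta^N_t(y)$. The second is a term quadratic in $\int h(x,\cdot)\,\dd\eta^N_t$ multiplied by $\sqrt N\cdot N^{-1}$. Iterating the recursion yields
\[
\langle\varphi,\eta^N_t\rangle=\langle Q_{0,t}\varphi,\eta^N_0\rangle+\sum_{s<t}R^N_s(Q_{s+1,t}\varphi).
\]
The CLT for i.i.d.\ data in \textbf{(A3)} then gives the Gaussian law with the stated variance for the first term, and joint Gaussianity over $t$ follows by Cramér--Wold. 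Note that $\eta$ defined by $\langle\varphi,\eta_t\rangle=\langle Q_{0,t}\varphi,\eta_0\rangle$ solves \eqref{eq:etat}: this is the duality obtained by moving the derivative onto $\varphi\circ F$.

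\textbf{Step 2: the main obstacle, $R^N_s\to0$ in probability.} This requires $\mathbb E\,|\langle\psi,\eta^N_t\rangle|^2\lesssim\|\psi\|^2_{C^k}$ uniformly in $N$ and $t\le T$, together with a bound on the double integral. Here I would use Theorem~\ref{thm:main}, via the Donsker--Varadhan/Gibbs inequality
\[
\mathbb E_{\rho_{N,t}}\Phi\le\frac1\lambda\Big(H(\rho_{N,t}|\bar\rho_t^{\otimes N})+\log\mathbb E_{\bar\rho_t^{\otimes N}}e^{\lambda\Phi}\Big).
\]
Under \textbf{(A3)}, $\HN(0)=0$, so $H(\rho_{N,t}|\bar\rho_t^{\otimes N})=N\HN(t)\le e^{CT}$ is bounded uniformly in $N$.

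1. For linear statistics, take $\Phi=|\langle\psi,\eta^N_t\rangle|^2$ with $\lambda$ small. Hoeffding's inequality bounds $\mathbb E_{\bar\rho_t^{\otimes N}}e^{\lambda\Phi}$ uniformly in $N$, since under the product measure the summands are i.i.d.\ and centered.
2. For the quadratic $U$-statistic term, a large-deviation bound for degenerate $U$-statistics (Jabin--Wang type) gives the same kind of exponential control.

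Together these show $R^N_s(Q_{s+1,t}\varphi)=O_P(N^{-1/2})$. Justifying that $Q_{s,t}$ preserves enough smoothness is where $F\in C^3$, $|F'|\ge c$, and $h\in C^3$ enter.

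\textbf{Step 3: tightness and identification.} Apply the second-moment bound with $\psi=e_k$, the Fourier modes. Since $\|e_k\|_{C^1}\lesssim|k|$, this gives $\mathbb E\|\eta^N_t\|^2_{H^{-\beta}}\le C$ for suitable $\beta$. For $\alpha>\beta$ the embedding $H^{-\beta}\hookrightarrow H^{-\alpha}$ is compact, which gives tightness in $(H^{-\alpha})^{T+1}$. To reach every $\alpha>1/2$, I would instead bound $\mathbb E|\langle e_k,\eta^N_t\rangle|^2\le C$ uniformly in $k$ directly, using that the Hoeffding constant for bounded $\psi$ depends only on $\|\psi\|_\infty$ once $\lambda$ is fixed. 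Then $\sum_k(1+k^2)^{-\alpha'}$ converges for any $\alpha'>1/2$, and choosing $1/2<\alpha'<\alpha$ gives tightness. Finally, the finite-dimensional distributions of $(\langle\varphi_j,\eta^N_{t_j}\rangle)$ converge to those of $\eta$ by Steps 1--2, and this identifies the limit uniquely.
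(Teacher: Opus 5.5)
Your proof is correct, but it identifies the limit by a different route from the paper. The paper uses the entropy bound (Theorem~\ref{thm:main} together with Lemma~\ref{lem:bound}, which your Gibbs--Hoeffding computation essentially reproves) only for tightness. It then does the following:
\begin{itemize}
\item passes to a Skorohod representation;
\item proves the $H^{-3}$ stability estimate of Proposition~\ref{prop:identify} between $\tilde\eta^N$ and the solution of \eqref{eq:etat} started from the almost-sure limit;
\item bounds the nonlinear remainders pathwise by $C\sqrt N\,W_1^2(\mu_N(t),\bar\rho_t)$, and then in expectation via Theorem~\ref{thm:Dobrushin} and the Bobkov--Ledoux bound (Lemma~\ref{lem:expectation-Wasserstein});
\item concludes by uniform integrability and uniqueness for \eqref{eq:etat}, reading off Gaussianity afterwards from the duality with $Q_{0,t}$.
\end{itemize}

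You instead iterate the dual recursion back to time $0$, which the paper does only for Theorem~\ref{thm:quantitative}. You control the remainders through the entropy-based moment bound $\mathbb{E}\|\eta^N_t\|^2_{H^{-\alpha}}\le C$. You then identify the limit through finite-dimensional distributions, via the multivariate CLT at time $0$ and Slutsky.

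What your route buys:
\begin{itemize}
\item It avoids Skorohod, uniform integrability and the $H^3$ composition estimate of Lemma~\ref{lem:H3}.
\item In particular, contrary to your closing remark in Step 2, the lower bound $|F'|\ge c$ plays no role for you. Only $C^2$ bounds on $Q_{s,t}\varphi$ are needed, and these use just $F,h\in C^2$.
\item The Gaussian law of the limit comes out of the same computation rather than through uniqueness.
\end{itemize}
What the paper's route buys: its remainder control is independent of the entropy estimate, and it yields the stability bound \eqref{eq:quantitative2} as a by-product.

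One point of ordering. The quadratic remainder $\frac{\Delta^2}{2\sqrt N}\langle(\varphi\circ F)''(\xi)\,(h\ast\eta^N_t)^2,\mu_N(t)\rangle$ is evaluated at the random particle positions. It therefore needs control of $\sup_x|(h\ast\eta^N_t)(x)|$, not just of individual linear statistics $\langle\psi,\eta^N_t\rangle$. Both this term and your double integral follow from the Step~3 bound, with no separate $U$-statistic estimate, via
\begin{itemize}
\item $\|h\ast\nu\|_{C^1}\le C\|\nu\|_{H^{-\alpha}}$, and
\item $|\langle g\,(h\ast\nu),\nu\rangle|\le C\|g\|_{C^1}\|\nu\|^2_{H^{-\alpha}}$,
\end{itemize}
valid for $\alpha\in(1/2,1]$ and $h\in C^2$. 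So Step 2 should be run after the $H^{-\alpha}$ moment bound is established.
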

Theorem 1.6 is a qualitative result which states the convergence without giving any rates. Actually, we can also obtain the following Berry-Esseen/Stein type quantitative result, claiming that the convergence is of order $O(N^{-1/2})$.
\begin{theorem}\label{thm:quantitative}
    Fix $T\in\N$ and $m\in\N$. Assume $\Delta<\Delta_0$, where $\Delta_0$ is the constant in Theorem~\ref{thm:main}. Under the assumptions \textup{\textbf{(A1.3)}}, \textup{\textbf{(A2)}} and \textup{\textbf{(A3)}}, given any $m$ test functions $\varphi_1,\dots,\varphi_m\in C^2(\T)$ and a function $\Psi\in C_b^3(\R^m)$, we have
    \begin{equation}\label{eq:quantitative}
        \left|\mathbb{E}\Psi\left(\langle\varphi_1,\eta_t^N\rangle,\dots,\langle\varphi_m,\eta_t^N\rangle\right)-\mathbb{E}\Psi\left(\langle\varphi_1,\eta_t\rangle,\dots,\langle\varphi_m,\eta_t\rangle\right)\right|\leq\frac{C\mathrm{e}^{Ct}}{\sqrt{N}},\quad t=0,\dots,T,
    \end{equation}
    where the constant $C>0$ depends on $\Delta,m,\|\Psi\|_{C_b^3},\|F\|_{C^2},\|h\|_{C^2}$ and $\|\varphi_i\|_{C^2}(1\leq i\leq m)$.
\end{theorem}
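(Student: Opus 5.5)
The plan is to linearize the one-step dynamics and propagate the error backwards in time through the dual operators $Q_{s,s+1}$. The comparison is then reduced to the initial time, where it becomes a classical multivariate Berry--Esseen/Stein estimate for i.i.d.\ sums with a $C_b^3$ test function.

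\textbf{Step 1: one-step decomposition.} For a test function $\varphi\in C^2(\T)$, write
\[
\langle\varphi,\mu_N(t+1)\rangle-\langle\varphi,\bar\rho_{t+1}\rangle=\langle\varphi\circ F_{\mu_N(t)},\mu_N(t)\rangle-\langle\varphi\circ F_{\bar\rho_t},\bar\rho_t\rangle .
\]
Split the right-hand side as
\[
\langle\varphi\circ F_{\bar\rho_t},\mu_N(t)-\bar\rho_t\rangle+\langle\varphi\circ F_{\mu_N(t)}-\varphi\circ F_{\bar\rho_t},\mu_N(t)\rangle .
\]
Taylor-expand the second piece in the perturbation $\Delta\langle h(x,\cdot),\mu_N(t)-\bar\rho_t\rangle$. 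The first-order term, with $\mu_N(t)$ replaced by $\bar\rho_t$, is exactly $N^{-1/2}$ times the second term of $Q_{t,t+1}\varphi$ paired with $\eta^N_t$. Multiplying by $\sqrt N$ gives
\[
\langle\varphi,\eta^N_{t+1}\rangle=\langle Q_{t,t+1}\varphi,\eta^N_t\rangle+R^N_t(\varphi),
\]
where the remainder $R^N_t$ is quadratic. It is bounded by
\[
C\sqrt N\Big(\sup_x|\langle h(x,\cdot),\mu_N-\bar\rho_t\rangle|^2+\sup_x|\langle h(x,\cdot),\mu_N-\bar\rho_t\rangle|\,|\langle g,\mu_N-\bar\rho_t\rangle|\Big),
\]
with constants depending on $\|F\|_{C^2}$, $\|h\|_{C^2}$ and $\|\varphi\|_{C^2}$. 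Iterating from time $t$ down to time $0$ gives
\[
\langle\varphi,\eta^N_t\rangle=\langle Q_{0,t}\varphi,\eta^N_0\rangle+\sum_{s<t}R^N_s(Q_{s+1,t}\varphi).
\]
For this I need $Q_{s,s+1}$ to map $C^2$ into $C^2$ with norm at most $e^{C}$. From \eqref{eq:def-Qtt+1} this holds: the integral term inherits the smoothness of $h$ in $y$, and the composition term uses $F\in C^3$ together with (A1.3). This is the origin of the factor $e^{Ct}$.

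\textbf{Step 2: reduction to time $0$.} By the characterization in Theorem~\ref{thm:CLT}, the law of $(\langle\varphi_i,\eta_t\rangle)_i$ is the centered Gaussian with covariance given by the $\bar\rho_0$-covariances of the functions $Q_{0,t}\varphi_i$. Since $\Psi$ is Lipschitz with constant $\|\Psi\|_{C^1_b}$,
\[
|\mathbb E\Psi(\langle\varphi_i,\eta^N_t\rangle)-\mathbb E\Psi(\langle\varphi_i,\eta_t\rangle)|\le|\mathbb E\Psi(\langle Q_{0,t}\varphi_i,\eta^N_0\rangle)-\mathbb E\Psi(\langle Q_{0,t}\varphi_i,\eta_t\rangle)|+\|\Psi\|_{C^1}\sum_{s,i}\mathbb E|R^N_s(Q_{s+1,t}\varphi_i)|.
\]
The first term compares a normalized sum of the i.i.d.\ bounded vectors $(Q_{0,t}\varphi_i(x_j(0)))_i$, which by (A3) is what $\langle Q_{0,t}\varphi_i,\eta^N_0\rangle$ is, with the Gaussian having the same covariance. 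The standard Lindeberg replacement (or Stein's method) with $\Psi\in C^3_b$ bounds it by $C\|\Psi\|_{C^3}m^3\max_i\|Q_{0,t}\varphi_i\|_\infty^3/\sqrt N$.

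\textbf{Step 3: the remainder, which is the main obstacle.} I need $\mathbb E\sup_x|\langle h(x,\cdot),\mu_N(s)-\bar\rho_s\rangle|^2\le Ce^{Cs}/N$, and similar bounds for the cross terms. Given this, each $\mathbb E|R^N_s|$ is $O(\sqrt N\cdot N^{-1})=O(N^{-1/2})$. Note that the $C_b^3$ assumption plus Lipschitz continuity means only first moments of the remainder are needed.

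To get the second-moment bound, I will first use Theorem~\ref{thm:main}. Under (A3), $\HN(0)=0$, so $\HN(s)\le e^{Cs}/N$, that is, $H(\rho_{N,s}|\bar\rho_s^{\otimes N})\le e^{Cs}$. Then I apply the Donsker--Varadhan variational inequality:
\[
\mathbb E_{\rho_{N,s}}\Phi\le\frac1\lambda\Big(H(\rho_{N,s}|\bar\rho_s^{\otimes N})+\log\mathbb E_{\bar\rho_s^{\otimes N}}e^{\lambda\Phi}\Big),\qquad\Phi=N\sup_x|\langle h(x,\cdot),\mu_N-\bar\rho_s\rangle|^2 .
\]
Under the product measure, $\Phi$ has uniform Gaussian-type exponential moments for small $\lambda$. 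Pointwise in $x$ this follows from Hoeffding's inequality, and the supremum over $x$ is handled by expanding $h$ in Fourier modes in $x$ (using $h\in C^3$), or by a chaining/Lipschitz net argument. This yields $\mathbb E\Phi\le Ce^{Cs}$, as required.

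If a difficulty arises, it will be in making the exponential-moment bound for the supremum uniform in $N$. As a fallback I would use the Fourier decomposition $h(x,y)=\sum_k\hat h_k(y)e^{2\pi ikx}$, whose coefficients are summable thanks to $C^3$ regularity, and reduce to countably many scalar test functions with summable weights. The same argument applies to the cross terms $|\langle g,\mu_N-\bar\rho\rangle|$, using Cauchy--Schwarz.

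Combining Steps 2 and 3 gives \eqref{eq:quantitative}.
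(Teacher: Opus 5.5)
Your Steps 1 and 2 follow the paper's argument. This covers the identity $\langle\varphi,\eta^N_{t+1}\rangle=\langle Q_{t,t+1}\varphi,\eta^N_t\rangle+Z_t(\varphi)$, its iteration through $Q_{s+1,t}$, the use of the Lipschitz bound on $\Psi$ for the remainders, and a multivariate Berry--Esseen/Stein bound at time $0$.

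However, your remainder bound contains a concrete error. In your decomposition the first-order Taylor term is integrated against $\mu_N(t)$. After you replace $\mu_N(t)$ by $\bar\rho_t$, the leftover cross term is
\[
\sqrt N\,\Delta\,\langle G\,u,\nu\rangle=\sqrt N\,\Delta\int_{\T}\int_{\T}G(x)h(x,y)\,\dd\nu(y)\,\dd\nu(x),\qquad \nu=\mu_N(t)-\bar\rho_t,\quad u=h\ast\nu,\quad G=(\varphi\circ F)'\circ\phi_{\bar\rho_t}.
\]
This is a bilinear form in $\nu$, in which the random, $x$-dependent function $Gu$ is paired with $\nu$. It is not bounded by $\sup_x|u|\cdot|\langle g,\nu\rangle|$ for any fixed $g$: such a bound would force the term to vanish whenever $\langle g,\nu\rangle=0$. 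The trivial bound $\|Gu\|_{L^\infty}\|\nu\|_{TV}$ also fails, because it loses the factor $N^{-1/2}$ since $\|\nu\|_{TV}=2$. So the cross-term estimates you plan in Step 3 target the wrong quantity.

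There are two ways to repair this. You can use the regularity of $Gu$ in $x$:
\[
|\langle Gu,\nu\rangle|\le\mathrm{Lip}(Gu)\,W_1(\mu_N(t),\bar\rho_t)\le C\big(\|h\ast\nu\|_{L^\infty}+\|\partial_1h\ast\nu\|_{L^\infty}\big)W_1(\mu_N(t),\bar\rho_t),
\]
which is what the paper does. Alternatively, staying in your framework, expand $h(x,y)=\sum_k e_k(x)\hat h_k(y)$ so the cross term becomes $\sum_k\langle\hat h_k,\nu\rangle\langle Ge_k,\nu\rangle$. Then apply Cauchy--Schwarz termwise, using $\mathbb{E}\,N|\langle f,\nu\rangle|^2\le C\|f\|_{L^\infty}^2$ from your entropy/Hoeffding argument and the summability of $\|\hat h_k\|_{L^\infty}$.

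Once repaired, your Step 3 is a valid but genuinely different and heavier route than the paper's. The paper bounds every quantity in the remainder by $W_1$ via Kantorovich--Rubinstein: $\|h\ast\nu\|_{L^\infty}\le\|\partial_2h\|_{L^\infty}W_1$ and $\|\partial_1h\ast\nu\|_{L^\infty}\le\|\partial_1\partial_2h\|_{L^\infty}W_1$. Hence the whole remainder is at most $C\sqrt N\,W_1^2(\mu_N(t),\bar\rho_t)\|\varphi\|_{C^2}$, and the supremum over $x$ that you flag as the main obstacle costs nothing. The paper then uses the \emph{pathwise} Dobrushin estimate of Theorem~\ref{thm:Dobrushin} to reduce to time $0$. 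There, for i.i.d. samples, $\mathbb{E}W_1^2(\mu_N(0),\bar\rho_0)\le\mathbb{E}W_2^2(\mu_N(0),\bar\rho_0)\le 2J_2(\bar\rho_0)/(N+1)$.

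Your route runs through Theorem~\ref{thm:main} to get $H(\rho_{N,s}|\bar\rho_s^{\otimes N})\le e^{Cs}$, then Donsker--Varadhan and uniform-in-$N$ exponential moments of empirical processes under $\bar\rho_s^{\otimes N}$. This is closer in spirit to the relative-entropy approach of Wang--Zhao--Zhu, and it does not need a pathwise stability estimate. Here, though, it imports the hypotheses and constants of Theorem~\ref{thm:main} ($\Delta<\Delta_0$, $\kappa$, $K$, $\|h\|_{C^3}$) plus a concentration argument. The paper needs only $\mathrm{Lip}(F)$, $\|h\|_{C^2}$ and a one-line moment bound at time $0$.
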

Our main results study the mean-field coupled maps from the perspective of propagation of chaos and Gaussian fluctuations. Results of this type are quite well-known in the theory of mean field limit. To the best of the author's knowledge, however, such results are established here for globally coupled maps for the first time.
\subsection{Related literature}
One of the first works on GCMs was \cite{Kan89}, where they studied the globally coupled system
\begin{equation}
    x_i(t+1)=(1-\eps)f(x_i(t))+\frac{\eps}{N}\sum_{j=1}^{N}f(x_j(t)),\quad i=1,\dots,N,
\end{equation}
with $x_i(t)\in[-1,1]$ and $\eps\geq 0$. And the uncoupled map $f:[-1,1]\to[-1,1]$ is chosen from the logistic family
$$f(x):=f_a(x)=1-ax^2$$
with $0\leq a\leq 2$. By simulating the dynamics, they observed that when the parameters $\eps,a$ and the initial data vary, the behaviors of the particles can change drastically, from synchronization to complete chaos.\par
Since then, many aspects of GCMs have been studied. To the author's knowledge, the closest works to ours are \cite{EP95} and \cite{EP97}, where they studied the evolution of the mean-field coupling term
$$h_N(t):=\frac{1}{N}\sum_{j=1}^{N}f(x_j(t))$$
for different maps $f$ and gave quantitative fluctuation rates for $h_N(t)$. Other works include studies of Lyapunov exponents\cite{Kan95,Mor97}, ergodic theory\cite{SB16,Kel97} of GCMs and so on. Recently, Ha, Lee and Yoon\cite{HLY25} also studied a time-discrete infinite Kuramoto model of similar type.\par
The notion of STOs was first introduced in the setting of GCMs in \cite{Kan92}. People have used different approaches to study the stability of fixed points of STOs in different settings, including the functional analytic approach\cite{BKST18,Kel00} and the cone approach\cite{Liv95,Tan22}. The linear response to perturbations for STOs has also been widely studied\cite{ST21}.\par
The study of propagation of chaos can be traced back to Kac\cite{Kac56}, who introduced the concept of Kac's chaos. McKean\cite{McK66,McK67} connected propagation of chaos with nonlinear Markov processes. Dobrushin\cite{Dob79} used Wasserstein distances to give another formulation for propagation of chaos. Sznitman\cite{Szn91} gave one of the most influential probabilistic frameworks for propagation of chaos. Mischler and Mouhot\cite{MM13}, Hauray and Mischler\cite{HM14}, Deng, Hani and Ma\cite{DHM24} studied the propagation of chaos for the Boltzmann equation, which is one of the deepest areas since the Boltzmann equation is nonlinear and collision-driven.\par
A major modern direction is propagation of chaos for singular forces, including Coulomb, Newtonian, vortex, and hard-sphere interactions. These are substantially harder because classical Lipschitz estimates fail. Pickl\cite{BP16,LP17} came up with the particle trajectory method, which shows that the trajectories of the original particle system and the limiting mean-field system are close in probability. Jabin and Wang\cite{JW18} proposed the relative entropy method, which controls the growth of the relative entropy between the particle distribution and the tensorized distribution. Serfaty\cite{Ser20} introduced the modulated energy method, which controls the growth of modulated energy associated to the empirical measure and the limiting distribution. All the methods above give rise to the propagation of chaos in some sense.\par
One of the earliest works on Gaussian fluctuations is due to It\^{o}\cite{Ito83}, where he showed that for the system of 1D independent and identically distributed Brownian motions, the limit of the corresponding fluctuations is a Gaussian process. The fluctuation in path space type result was first obtained by Tanaka and Hitsuda\cite{TH81} and by Tanaka\cite{Tan84} for interacting diffusions. Fernandez and M\'{e}l\'{e}ard\cite{FM97} proposed a Hilbertian approach to study interacting diffusions with sufficiently regular coefficients. Wang, Zhao and Zhu\cite{WZZ23} used Jabin and Wang's relative entropy method to study the Gaussian fluctuations for interacting particle systems with singular kernels, where they proved that the fluctuation processes will converge in distribution to a generalized Ornstein-Uhlenbeck process. Recently, Hao, Zhang and Zhao\cite{HZZ26} gave a quantitative estimate of the fluctuations for kinetic McKean--Vlasov SDEs with singular interaction kernels.
\subsection{Structure of the paper}
The rest of the paper will be organized as follows. In Section~\ref{sec:preliminaries} we will give some preliminaries for our results, including brief introductions to Wasserstein distances, push-forward maps, the data processing inequality and Sobolev spaces on the torus. The proof of the Dobrushin-type estimate (Theorem~\ref{thm:Dobrushin}) will be given in Section~\ref{sec:Dobrushin-type-estimate}, which is a rather simple one.\par
Section~\ref{sec:relative-entropy-bound} will be devoted to proving the relative entropy bound(Theorem~\ref{thm:main}). First in Section~\ref{sub:large-deviation} we will give some large deviation type estimates, which have been established by Jabin and Wang in \cite{JW18}. Then in Section~\ref{sub:one-step} we propose a one-step inequality that controls the difference $\HN(t+1)-\HN(t)$. Next in Section~\ref{sub:estimates-on-Rt} we further estimate the difference term and use the estimate to complete the proof of Theorem~\ref{thm:main}. Finally in Section~\ref{sub:propagation-of-chaos} we give a short proof for Corollary~\ref{cor:Kacs-chaos}, which is a standard one.\par
Section~\ref{sec:Gaussian-fluctuations} is concerned with the central limit theory(Theorem~\ref{thm:CLT}) for our system. First in Section~\ref{sub:formal-derivation} we propose the formal derivation of the equation \eqref{eq:etat} satisfied by $\eta_t$. Then in Section~\ref{sub:time-evolution} we discuss the time evolution operator $Q_{0,t}$ and the reason why $\eta$ is a Gaussian process. Next in Section~\ref{sub:convergence} we use the relative entropy method to complete the proof of Theorem~\ref{thm:CLT}. Finally in Section~\ref{sub:quantitative} we establish the quantitative estimate(Theorem~\ref{thm:quantitative}).
\section{Preliminaries}\label{sec:preliminaries}
Throughout the rest of the paper, the letter $C$ denotes a generic positive constant whose value may vary from line to line. Its dependence will be specified in the statements when needed.
\subsection{Wasserstein Distances}\label{sub:Wasserstein-distances}
Let $E$ be a Polish space with distance $d$ and $\mathcal{P}(E)$ be the set of all Borel probability measures on $E$. For $p\geq 1$, we let $\mathcal{P}_p(E)$ denote the set of all Borel probability measures with a finite moment of order $p$, i.e.
$$\mathcal{P}_p(E):=\left\{\mu\in\mathcal{P}(E)\Big|\int_E d(x,x_0)^p\dd\mu(x)<+\infty\right\},$$
where $x_0\in E$ is a fixed point. One can easily check that the definition of $\mathcal{P}_p(E)$ does not depend on the choice of $x_0$.
\begin{definition}\label{def:Wasserstein}
    Let $\mu,\nu\in\mathcal{P}_p(E)$. A \textit{coupling} of $\mu$ and $\nu$ is a probability measure $\pi$ on $E\times E$ such that
    $$\pi(A\times E)=\mu(A),\qquad \pi(E\times B)=\nu(B),$$
    for all Borel sets $A,B\subseteq E$. The set of all couplings of $\mu$ and $\nu$ is denoted by $\Pi(\mu,\nu)$. The \textit{$p$-Wasserstein distance} of $\mu$ and $\nu$ is defined by
    \begin{equation}\label{eq:Wasserstein}
        W_p(\mu,\nu):=\left(\inf_{\pi\in\Pi(\mu,\nu)}\int_{E\times E}d(x,y)^p\dd\pi(x,y)\right)^{\frac{1}{p}}.
    \end{equation}
\end{definition}
\begin{remark}\label{rmk:Wasserstein}
    Equipped with the distance function $W_p$, the space $\mathcal{P}_p(E)$ becomes a Polish space.
\end{remark}
For the $1$-Wasserstein distance, we have the following Kantorovich-Rubinstein dual formula:
\begin{proposition}\label{prop:1-Wasserstein}
    Let $\mu,\nu\in\mathcal{P}_1(E)$. Then
    \begin{equation}\label{eq:1-Wasserstein}
        W_1(\mu,\nu)=\sup_{\substack{\varphi\in\mathrm{Lip}(E)\\\mathrm{Lip}(\varphi)\leq 1}}\left|\int_E\varphi(x)\dd(\mu-\nu)(x)\right|,
    \end{equation}
    where $\mathrm{Lip}(E)$ denotes the set of all Lipschitz functions on $E$, and for $\varphi\in\mathrm{Lip}(E)$, we denote the Lipschitz constant of $\varphi$ by $\mathrm{Lip}(\varphi)$.
\end{proposition}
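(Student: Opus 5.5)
The inequality ``$\geq$'' in \eqref{eq:1-Wasserstein} is elementary, so I would do it first. Let $\varphi$ be $1$-Lipschitz and let $\pi\in\Pi(\mu,\nu)$. Since $\mu,\nu\in\mathcal{P}_1(E)$, $\varphi$ is integrable under both measures, and the marginal conditions give
$$\int_E\varphi\dd(\mu-\nu)=\int_{E\times E}\bigl(\varphi(x)-\varphi(y)\bigr)\dd\pi(x,y)\leq\int_{E\times E}d(x,y)\dd\pi(x,y).$$
Replacing $\varphi$ by $-\varphi$ controls the absolute value. Taking the infimum over $\pi$ and then the supremum over $\varphi$ gives $\sup\leq W_1(\mu,\nu)$.

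For ``$\leq$'' I would invoke general Kantorovich duality for the lower semicontinuous cost $c=d$:
$$W_1(\mu,\nu)=\sup\Bigl\{\int\psi\dd\mu+\int\chi\dd\nu:\ \psi\in L^1(\mu),\ \chi\in L^1(\nu),\ \psi(x)+\chi(y)\leq d(x,y)\Bigr\}.$$
I would then reduce the pair $(\psi,\chi)$ to a single Lipschitz function by the $c$-transform.
\begin{enumerate}[label=(\roman*)]
\item Replace $\chi$ by $\psi^c(y):=\inf_x\bigl(d(x,y)-\psi(x)\bigr)\geq\chi(y)$. This is an infimum of $1$-Lipschitz functions of $y$, so it is $1$-Lipschitz, and it is finite wherever $\chi$ is finite.
\item Replace $\psi$ by $\psi^{cc}(x):=\inf_y\bigl(d(x,y)-\psi^c(y)\bigr)\geq\psi(x)$.
\item For a $1$-Lipschitz $u$, the choice $y=x$ gives $\inf_y\bigl(d(x,y)-u(y)\bigr)\leq -u(x)$. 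The Lipschitz bound $d(x,y)-u(y)\geq -u(x)$ gives the reverse inequality, so $\psi^{cc}=-\psi^c$.
\end{enumerate}
Each replacement only increases the dual objective. So with $\varphi:=-\psi^c$, which is $1$-Lipschitz, the value is at most $\int\varphi\dd\mu-\int\varphi\dd\nu$, and the duality formula yields $W_1\leq\sup$.

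It remains to justify Kantorovich duality itself. This is the main obstacle. The weak-duality half is trivial, exactly as above. For the nontrivial half, I would first treat $\mu,\nu$ finitely supported, where the problem is a finite linear program and LP duality (equivalently, a finite-dimensional Hahn--Banach separation) gives equality.

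For general $\mu,\nu$ I would approximate them weakly by finitely supported measures $\mu_k,\nu_k$ with $W_1(\mu_k,\mu),W_1(\nu_k,\nu)\to0$. The right side of \eqref{eq:1-Wasserstein} is then already known to be $1$-Lipschitz in each argument with respect to $W_1$, by the triangle inequality for $W_1$ and the easy direction. For the left side I would use tightness of couplings (Prokhorov) together with lower semicontinuity of $\pi\mapsto\int d\dd\pi$. These give $W_1(\mu,\nu)\leq\liminf_k W_1(\mu_k,\nu_k)$, and equality at the discrete level then passes to the limit.

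In the application of this paper $E=\T$ is compact. There all measures lie in $\mathcal{P}_1$, tightness is automatic, and one could even run Fenchel--Rockafellar duality directly on $C(\T\times\T)$. So this last step should be routine here.
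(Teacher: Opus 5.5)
Your proof is correct. There is, however, no argument in the paper to compare it with: Proposition~\ref{prop:1-Wasserstein} is quoted as the classical Kantorovich--Rubinstein duality without proof, and it is only ever applied with $E=\T$.

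Compared with the standard textbook routes (general Kantorovich duality via $c$-cyclical monotonicity, or Fenchel--Rockafellar on a compact space followed by truncation), yours is more elementary and self-contained:
\begin{itemize}
\item the easy direction comes from integrating $\varphi(x)-\varphi(y)\leq d(x,y)$ against a coupling;
\item the hard direction comes from finite LP duality plus the $c$-transform;
\item a limiting argument then uses lower semicontinuity of $\pi\mapsto\int d\dd\pi$ under tightness to get $W_1(\mu,\nu)\leq\liminf_k W_1(\mu_k,\nu_k)$, with no gluing lemma.
\end{itemize}

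Be aware that this limiting argument passes to the limit in \eqref{eq:1-Wasserstein} itself. It does not pass to the limit in the general two-function duality you announced you would justify. So steps (i)--(iii) should be carried out at the discrete level, where the LP dual variables $\psi_i,\chi_j$ live only on the supports. This is harmless: $\psi^c(y):=\min_i\bigl(d(x_i,y)-\psi_i\bigr)$ is $1$-Lipschitz on all of $E$, with $\psi^c(y_j)\geq\chi_j$ and $\psi^c(x_i)\leq-\psi_i$. Hence $\varphi:=-\psi^c$ dominates the dual value.

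Two minor points to fix:
\begin{itemize}
\item The $1$-Lipschitz dependence of the right-hand side on each argument follows from the triangle inequality for the seminorm $\sigma\mapsto\sup_{\mathrm{Lip}(\varphi)\leq1}|\int\varphi\dd\sigma|$ together with the easy direction. It does not come from the triangle inequality for $W_1$, which you never need.
\item The existence of finitely supported $\mu_k$ with $W_1(\mu_k,\mu)\to0$ on a Polish space should be justified, by separability plus a truncation controlled by the first moment.
\end{itemize}
On $\T$ both points are immediate.
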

Besides, we can characterize the topology generated by the $1$-Wasserstein distance by the following proposition.
\begin{proposition}\label{prop:topology}
    Let $\mu_n,\mu\in\mathcal{P}_1(E)$ and fix an $x_0\in E$. Then $W_1(\mu_n,\mu)\to 0$ as $n\to\infty$ if and only if:\\
    \textup{(i)} $\mu_n$ converges weakly to $\mu$ in $\mathcal{P}(E)$ as $n\to \infty$;\\
    \textup{(ii)} $\lim_{n\to\infty}\int_E d(x,x_0)\dd\mu_n=\int_E d(x,x_0)\dd\mu$.\\
\end{proposition}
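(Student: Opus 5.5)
This is the standard characterization of $W_1$-convergence, and the plan is to prove both directions using the Kantorovich--Rubinstein formula (Proposition~\ref{prop:1-Wasserstein}) together with a truncation argument.

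\emph{($\Rightarrow$).} Suppose $W_1(\mu_n,\mu)\to0$. Since $x\mapsto d(x,x_0)$ is $1$-Lipschitz, Proposition~\ref{prop:1-Wasserstein} gives
$$\left|\int d(x,x_0)\,\dd\mu_n-\int d(x,x_0)\,\dd\mu\right|\le W_1(\mu_n,\mu)\to0,$$
which is (ii). For (i), the same formula gives $\int\varphi\,\dd\mu_n\to\int\varphi\,\dd\mu$ for every bounded Lipschitz $\varphi$, after rescaling by $\mathrm{Lip}(\varphi)$. By the portmanteau theorem, convergence against bounded Lipschitz functions is equivalent to weak convergence. One can see this by approximating $\mathbf 1_F$ for closed $F$ from above by $\max(0,1-k\,d(x,F))$, which yields $\limsup_n\mu_n(F)\le\mu(F)$.

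\emph{($\Leftarrow$).} Assume (i) and (ii). Fix $\varphi$ with $\mathrm{Lip}(\varphi)\le1$; subtracting a constant, we may take $\varphi(x_0)=0$, so $|\varphi(x)|\le d(x,x_0)$. For $R>0$ split
$$\varphi=\varphi_R+(\varphi-\varphi_R),\qquad \varphi_R:=\max(-R,\min(\varphi,R)).$$
Then $|\varphi-\varphi_R|\le (d(x,x_0)-R)_+=:g_R(x)$. The key uniform-integrability step is
$$\int g_R\,\dd\mu_n=\int d(\cdot,x_0)\,\dd\mu_n-\int \min(d(\cdot,x_0),R)\,\dd\mu_n\longrightarrow \int g_R\,\dd\mu ,$$
which holds by (ii) and by (i) applied to the bounded continuous function $\min(d(\cdot,x_0),R)$. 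Since $\int g_R\,\dd\mu\to0$ as $R\to\infty$ by dominated convergence, we get $\limsup_n\int g_R\,\dd\mu_n\le\epsilon$ once $R$ is large.

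What remains is the uniformity over the class $\{\varphi_R:\mathrm{Lip}\le1,\ \|\varphi_R\|_\infty\le R\}$, since (i) only handles one function at a time. This is the main obstacle, and I would handle it with a tightness argument. By Prokhorov the sequence $(\mu_n)$ is tight, so choose a compact $K$ with $\mu_n(K^c),\mu(K^c)\le\epsilon/R$ for all $n$. By Arzel\`a--Ascoli, the family of restrictions $\varphi_R|_K$ (equi-Lipschitz, uniformly bounded) is totally bounded in $C(K)$. Take a finite $\epsilon$-net $\psi_1,\dots,\psi_m$, and extend each $\psi_j$ to a $1$-Lipschitz function on $E$ bounded by $R$, using the McShane extension followed by truncation.

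Now let $\psi_j$ be the net element with $\sup_K|\varphi_R-\psi_j|\le\epsilon$. Off $K$, $|\varphi_R-\psi_j|\le 2R$, so the contribution from $K^c$ is at most $2R\cdot\epsilon/R$ for each measure. Hence
$$\sup_\varphi\left|\int\varphi_R\,\dd(\mu_n-\mu)\right|\le \max_{j\le m}\left|\int\psi_j\,\dd(\mu_n-\mu)\right|+C\epsilon,$$
and the finite maximum tends to zero by (i). Combining this with the tail bound gives $\limsup_n W_1(\mu_n,\mu)\le C\epsilon$, and $\epsilon$ is arbitrary.

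On a compact space such as $\T$, which is the setting used in the paper, (ii) is automatic from (i) and the tightness step is trivial.
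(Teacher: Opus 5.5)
Your proof is correct and complete. The paper gives no proof of this proposition; it is quoted as a standard fact from optimal transport. The only place it is invoked is the proof of Corollary~\ref{cor:Kacs-chaos}, on the compact space $\T^k$. That proof uses just the easy implication ($W_1\to0$ implies weak convergence), which your first paragraph already settles.

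The relevant comparison is therefore with the usual textbook proof of the nontrivial direction, which works with couplings:
\begin{enumerate}
\item take optimal plans $\pi_n\in\Pi(\mu_n,\mu)$;
\item use tightness to extract a weak limit $\pi\in\Pi(\mu,\mu)$;
\item invoke stability of optimal plans under weak limits to get $\int d\,\dd\pi=0$;
\item use the uniform integrability of $d(\cdot,x_0)$ furnished by (ii) to pass to the limit in $\int d\,\dd\pi_n$.
\end{enumerate}
You stay entirely on the dual side instead. Kantorovich--Rubinstein plus truncation reduces everything to uniform convergence over a bounded equi-Lipschitz class. You obtain that uniformity from tightness and Arzel\`a--Ascoli, exactly as in the classical proof that the bounded-Lipschitz metric metrizes weak convergence, with (ii) supplying the tail control. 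Your route avoids existence and stability of optimal plans, but it is tied to $p=1$ through Proposition~\ref{prop:1-Wasserstein}. The coupling route works essentially verbatim for every $W_p$.

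Two points deserve a word of precision. First, tightness of $\{\mu_n\}\cup\{\mu\}$ comes from the converse half of Prokhorov's theorem, which is where completeness and separability of $E$ are used. Second, choose the $\epsilon$-net $\psi_1,\dots,\psi_m$ inside the family $\{\varphi_R|_K\}$ itself. This is always possible for a totally bounded set, at the cost of $2\epsilon$ instead of $\epsilon$. It ensures the McShane extension is applied to functions that really are $1$-Lipschitz on $K$. Alternatively, you can simply use the original functions $\varphi_R$ on $E$ as the extensions.
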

\begin{remark}\label{rmk:topology}
    When $E$ is compact, the distance function $d(\cdot,x_0)$ is a bounded continuous function on $E$, and hence (i) implies (ii). So in this case, the $1$-Wasserstein distance generates the weak topology on $\mathcal{P}_1(E)$.
\end{remark}
\subsection{Push-forward maps}\label{sub:push-forward-maps}
As introduced in Section~\ref{sec:introduction}, the push-forward maps play an important role in our system. So it is necessary to write down the explicit expression for push-forward maps on the torus.\par
For a measurable map $f:\T^d\to\T^d$, the push-forward map $f_{\ast}:\mathcal{P}(\T^d)\to\mathcal{P}(\T^d)$ induced by $f$ is characterized by
\begin{equation}\label{eq:def-push-forward}
    \int_{\T^d}\varphi(x)\dd(f_{\ast}\mu)(x)=\int_{\T^d}\varphi(f(x))\dd\mu(x),
\end{equation}
for all measurable functions $\varphi$ on $\T^d$. For a $\mu\in\mathcal{P}(\T^d)$ that is absolutely continuous with respect to the Lebesgue measure on $\T^d$, we abuse notation to denote both the probability measure and the density function by $\mu$. Under certain circumstances, e.g. $f$ is a diffeomorphism on $\T^d$, $f_{\ast}$ maps absolutely continuous measures to absolutely continuous measures, in which case we can write down the explicit expression for $f_{\ast}\mu$ by an application of the change of variables formula.
\begin{proposition}\label{prop:general-push-forward}
    Let $f:\T^d\to\T^d$ be a $C^1$ diffeomorphism and $\mu\in L^1(\T^d)\cap\mathcal{P}(\T^d)$ be an absolutely continuous Borel probability measure on $\T^d$. Then we have the formula
    \begin{equation}
        (f_{\ast}\mu)(x)=\frac{\mu(f^{-1}(x))}{|Df|(f^{-1}(x))},\quad x\in\T^d,
    \end{equation}
    where $|Df|$ denotes the absolute value of the Jacobian of $f$.
\end{proposition}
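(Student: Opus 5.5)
The statement to prove is the change-of-variables formula $(f_\ast\mu)(x)=\mu(f^{-1}(x))/|Df|(f^{-1}(x))$ for a $C^1$ diffeomorphism $f$ of $\T^d$. I will verify it through the defining identity \eqref{eq:def-push-forward}. It suffices to show that for every bounded measurable $\varphi$ on $\T^d$,
$$\int_{\T^d}\varphi(f(y))\mu(y)\dd y=\int_{\T^d}\varphi(x)\frac{\mu(f^{-1}(x))}{|Df|(f^{-1}(x))}\dd x.$$
By a monotone class argument, indicators of Borel sets suffice, or continuous $\varphi$, since those determine a finite Borel measure on the compact space $\T^d$. Once the identity holds, the right-hand side exhibits $f_\ast\mu$ as absolutely continuous with the claimed density.

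\textbf{Lifting to $\R^d$.} The identity follows from the classical change of variables formula on $\R^d$, applied to the substitution $x=f(y)$:
$$\int \psi(x)\dd x=\int\psi(f(y))|Df|(y)\dd y,\qquad \psi(x)=\varphi(x)\frac{\mu(f^{-1}(x))}{|Df|(f^{-1}(x))}.$$
Here $\psi(f(y))|Df|(y)=\varphi(f(y))\mu(y)$. The factor $|Df|$ is nonvanishing and continuous, because $f$ is a $C^1$ diffeomorphism. Hence $1/|Df|\circ f^{-1}$ is bounded and measurable, and $\psi\in L^1$ whenever $\mu\in L^1$ and $\varphi$ is bounded.

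To make the torus version rigorous, I will cover $\T^d$ by finitely many small open sets $U_k$ on which $f$ lifts to a $C^1$ diffeomorphism between open subsets of $\R^d$. This works because the lift of $f$ to $\R^d$ is a local diffeomorphism, and on a sufficiently small chart the quotient map $\R^d\to\T^d$ is an isometry. I then take a measurable partition $\{A_k\}$ of $\T^d$ with $A_k\subseteq U_k$ and apply the Euclidean formula on each $A_k$, using $\varphi\mathbf 1_{f(A_k)}$ on the right-hand side. Since $f$ is a bijection, the images $f(A_k)$ partition $\T^d$ and are Borel. Summing over $k$ therefore gives the identity.

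\textbf{Main obstacle.} The only delicate point is this localization. The Jacobian $|Df|$ must be computed from the local lift; it is well defined because the lifts differ by integer translations, which do not change derivatives. The partition must also be compatible with images under the bijection $f$. Beyond that, the argument reduces to the standard Euclidean theorem.
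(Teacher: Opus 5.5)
Your proposal is correct and follows the same route as the paper, which gives no separate proof and simply says the formula comes from the change of variables formula. Your localization makes that appeal rigorous on $\T^d$: you use small charts on which $f$ lifts injectively, so the Euclidean formula applies piece by piece, and the images $f(A_k)$ form a Borel partition, so the pieces sum correctly.
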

We let $\Phi:\T^N\to\T^N$ denote the one-step transformation in \eqref{eq:particle-system}, i.e. $\Phi=(\Phi_1,\Phi_2,\dots,\Phi_N)$ with
\begin{equation}\label{eq:def-Phi}
    \Phi_i(x_1,\dots,x_N)=x_i+\frac{\Delta}{N}\sum_{j=1}^{N}h(x_i,x_j),\quad i=1,\dots,N.
\end{equation}
From \eqref{eq:def-phimu} and \eqref{eq:def-Phi}, it is obvious that for $\Delta$ small enough, both $\Phi$ and $\phi_{\mu}$ are diffeomorphisms, in which case we can apply Proposition~\ref{prop:general-push-forward} to write down the explicit expressions for $\Phi_{\ast}$ and $(\phi_{\mu})_{\ast}$.
\begin{corollary}\label{cor:push-forward}
    There exists a constant $\Delta_1>0$, depending only on $\|h\|_{C^1}$, such that when $\Delta<\Delta_1$, the mapping $\Phi$ becomes a diffeomorphism on $\T^N$ and $\phi_{\mu}$ also becomes a diffeomorphism on $\T$ for any $\mu\in\mathcal{P}(\T)$. In this case, for any $\rho\in L^1(\T^N)\cap\mathcal{P}(\T^N)$ and $\bar{\rho}\in L^1(\T)\cap\mathcal{P}(\T)$, we have
    \begin{equation}\label{eq:push-forward}
        (\Phi_{\ast}\rho)(X)=\frac{\rho(\Phi^{-1}(X))}{|D\Phi|(\Phi^{-1}(X))},\quad((\phi_{\mu})_{\ast}\bar{\rho})(x)=\frac{\bar{\rho}(\phi_{\mu}^{-1}(x))}{\phi_{\mu}'(\phi_{\mu}^{-1}(x))}.
    \end{equation}
\end{corollary}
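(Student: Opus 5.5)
The plan is to prove Corollary~\ref{cor:push-forward} in two parts. First we show that $\phi_\mu$ and $\Phi$ are diffeomorphisms when $\Delta$ is small. The formulas \eqref{eq:push-forward} then follow at once from Proposition~\ref{prop:general-push-forward}.

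For $\phi_\mu$, lift to $\R$. The function $x\mapsto x+\Delta\int h(x,y)\dd\mu(y)$ has derivative
\[
\phi_\mu'(x)=1+\Delta\int_\T\partial_1h(x,y)\dd\mu(y)\ge 1-\Delta\|h\|_{C^1}.
\]
Differentiation under the integral is justified because $h\in C^1$ on a compact set. If $\Delta<\Delta_1:=1/(2\|h\|_{C^1})$, then $\phi_\mu'\ge 1/2>0$. Since $h$ is $1$-periodic in $x$, the lift satisfies $\tilde\phi_\mu(x+1)=\tilde\phi_\mu(x)+1$, so $\phi_\mu$ is a degree-one circle map with positive derivative. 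It is therefore a $C^1$ bijection of $\T$ with $C^1$ inverse (inverse function theorem), i.e.\ a $C^1$ diffeomorphism. The bound is uniform in $\mu\in\mathcal P(\T)$.

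For $\Phi$, write $\Phi=\mathrm{Id}+\Delta G$ on the lift to $\R^N$, where $G_i(X)=\frac1N\sum_j h(x_i,x_j)$ is $\Z^N$-periodic. The Jacobian is $D\Phi=I+\Delta DG$, with entries
\[
\partial_{x_i}G_i=\frac1N\sum_j\partial_1h(x_i,x_j)+\frac1N\partial_2h(x_i,x_i),\qquad \partial_{x_k}G_i=\frac1N\partial_2h(x_i,x_k)\quad (k\ne i).
\]
Hence every absolute row sum of $DG$ is at most $2\|h\|_{C^1}$, so $\|DG\|_{\ell^\infty\to\ell^\infty}\le 2\|h\|_{C^1}$, independently of $N$. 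This uniformity in $N$ is the only delicate point.

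With $\Delta<\Delta_1:=1/(4\|h\|_{C^1})$, the map $\Delta G$ is a contraction with constant $1/2$ in the $\ell^\infty$ norm on $\R^N$. For each $Y$ the map $X\mapsto Y-\Delta G(X)$ is then a contraction, and Banach's fixed point theorem gives a unique $X$ with $\Phi(X)=Y$. So the lift $\tilde\Phi$ is a bijection of $\R^N$. Also $D\Phi$ is invertible by a Neumann series, so $\tilde\Phi^{-1}$ is $C^1$. Periodicity $\tilde\Phi(X+k)=\tilde\Phi(X)+k$ for $k\in\Z^N$ lets $\tilde\Phi$ descend to a bijection of $\T^N$: injectivity on $\T^N$ follows because $\tilde\Phi(X)-\tilde\Phi(X')\in\Z^N$ forces $X-X'\in\Z^N$ by uniqueness. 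Thus $\Phi$ is a $C^1$ diffeomorphism of $\T^N$. Taking the smaller of the two thresholds, $\Delta_1=1/(4\|h\|_{C^1})$ works for both maps.

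Finally, apply Proposition~\ref{prop:general-push-forward} with $f=\Phi$ (on $\T^N$) and $f=\phi_\mu$ (on $\T$). In the one-dimensional case $|D\phi_\mu|=\phi_\mu'$, since the derivative is positive. This yields exactly \eqref{eq:push-forward}. The main obstacle, though mild, is getting the invertibility of $\Phi$ with a threshold independent of $N$, which the $\ell^\infty$ contraction argument handles.
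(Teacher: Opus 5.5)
Your proof is correct and takes the same route as the paper: you show that $\phi_\mu$ and $\Phi$ are diffeomorphisms for small $\Delta$, then read off \eqref{eq:push-forward} from Proposition~\ref{prop:general-push-forward}. The paper only asserts the diffeomorphism property as obvious, so your lift-and-contraction argument fills in details it omits, in particular the $\ell^\infty$ row-sum bound that gives a threshold $\Delta_1$ depending only on $\|h\|_{C^1}$ and not on $N$.
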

\subsection{The data processing inequality}\label{sub:the-data-processing-inequality}
The data processing inequality is a significant inequality in the field of information theory, which states that the relative entropy of two probability measures does not increase under any push-forward map.
\begin{proposition}\label{prop:data-processing}
    Let $E$ be a Polish space and $\mu,\nu\in\mathcal{P}(E)$ be two Borel probability measures on $E$. Suppose that $f:E\to E$ is a continuous map. Then we have
    \begin{equation}\label{eq:data-processing}
        H(f_{\ast}\mu|f_{\ast}\nu)\leq H(\mu|\nu).
    \end{equation}
\end{proposition}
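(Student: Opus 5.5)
The plan is to prove the data processing inequality through the Donsker--Varadhan variational formula for relative entropy. For Borel probability measures $\mu,\nu$ on a Polish space $E$ this formula reads
\begin{equation*}
    H(\mu|\nu)=\sup_{g\in C_b(E)}\left\{\int_E g\dd\mu-\log\int_E \mathrm{e}^{g}\dd\nu\right\},
\end{equation*}
where the supremum may be taken over bounded continuous functions. I would take this formula as standard, or sketch its proof. The inequality ``$\geq$'' comes from Jensen's inequality applied to the density $\frac{\dd\mu}{\dd\nu}$. The equality is obtained by approximating $\log\frac{\dd\mu}{\dd\nu}$ by truncated bounded measurable functions and then by continuous ones, using Lusin's theorem on the Polish space. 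If $\mu\not\ll\nu$, the supremum is $+\infty$: one tests against $g=n\mathbf{1}_A$ for a set $A$ with $\nu(A)=0<\mu(A)$, again after continuous approximation.

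Given the formula, the inequality is immediate. Take any $g\in C_b(E)$. Since $f$ is continuous, $g\circ f\in C_b(E)$. By the definition of push-forward \eqref{eq:def-push-forward},
\begin{equation*}
    \int_E g\dd(f_{\ast}\mu)-\log\int_E\mathrm{e}^{g}\dd(f_{\ast}\nu)
    =\int_E g\circ f\dd\mu-\log\int_E\mathrm{e}^{g\circ f}\dd\nu\leq H(\mu|\nu).
\end{equation*}
Taking the supremum over $g$ on the left-hand side gives $H(f_{\ast}\mu|f_{\ast}\nu)\leq H(\mu|\nu)$. This argument handles both the absolutely continuous case and the case $H(\mu|\nu)=\infty$ without any case split.

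An alternative route avoids the variational formula. Assume $\mu\ll\nu$, since otherwise there is nothing to prove, and let $\rho=\frac{\dd\mu}{\dd\nu}$. Then $f_{\ast}\mu\ll f_{\ast}\nu$, with density given by the conditional expectation $\mathbb{E}_\nu[\rho\mid f](y)$, that is, $\tilde\rho\circ f=\mathbb{E}_\nu[\rho\,|\,\sigma(f)]$. Apply the conditional Jensen inequality to the convex function $u\mapsto u\log u$ and integrate. This gives
\begin{equation*}
    \int\tilde\rho\log\tilde\rho\dd(f_\ast\nu)=\int(\tilde\rho\circ f)\log(\tilde\rho\circ f)\dd\nu\leq\int\rho\log\rho\dd\nu.
\end{equation*}

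The main technical point in either route is measure-theoretic rather than conceptual. In the first route it is the justification of the variational formula with continuous test functions. In the second it is the existence of the disintegration, or equivalently the Radon--Nikodym identification of $\frac{\dd f_\ast\mu}{\dd f_\ast\nu}$. I would present the first route and cite the Donsker--Varadhan formula as standard.
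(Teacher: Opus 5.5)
Your proof is correct, and it takes a different route from the paper. The paper gives no proof of Proposition~\ref{prop:data-processing}; it treats it as standard. In Remark~\ref{rmk:data-processing} it points to the chain rule $H(\mu|\nu)=H(f_{\ast}\mu|f_{\ast}\nu)+\int H(\mu^y|\nu^y)\dd(f_{\ast}\mu)(y)$, obtained by disintegrating $\mu,\nu$ along $f$. The inequality then follows because the conditional entropies are nonnegative. That route needs regular conditional probabilities, hence the Polish hypothesis, but in exchange it identifies the loss term exactly.

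Your Donsker--Varadhan route avoids disintegration altogether. The half of the formula you apply to $(\mu,\nu)$ with test function $g\circ f$ is exactly the Gibbs inequality of Proposition~\ref{prop:gibbs} with $\lambda=1$. The nontrivial half, that the supremum actually attains $H$, is needed only for the pair $(f_{\ast}\mu,f_{\ast}\nu)$. You could also state the formula with bounded measurable test functions; it holds in that form on any measurable space, using just the truncation argument you describe. Then you need neither Lusin's theorem nor the continuity of $f$, and the inequality holds for every measurable $f$.

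Your second route is the one closest to the paper's. The Jensen gap $\int\log\frac{\rho}{\tilde\rho\circ f}\dd\mu$ is precisely the loss term $\int H(\mu^y|\nu^y)\dd(f_{\ast}\mu)(y)$ in the remark. Note, however, that this route uses only the conditional expectation $\mathbb{E}_\nu[\rho\,|\,\sigma(f)]$ and the Doob--Dynkin factorization $\tilde\rho\circ f$. Both exist in complete generality, and no disintegration is required, so the technical obstacle you flag for that route is lighter than you suggest.
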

\begin{remark}\label{rmk:data-processing}
    Actually, we can obtain the explicit expression for the loss term when applying the data processing inequality, using the regular condition probability distribution(r.c.p.d.). Indeed, if $X,Y$ are two Polish spaces and $f:X\to Y$ is a continuous map, let $\mu,\nu\in\mathcal{P}(X)$ be two Borel probability measures on $X$ and $\{\mu^y|y\in Y\},\ \{\nu^y|y\in Y\}$ be the r.c.p.d. of $\mu$ and $\nu$ given $f$ respectively, then we have
    \begin{equation}\label{eq:data-processing-loss}
        H(\mu|\nu)=H(f_{\ast}\mu|f_{\ast}\nu)+\int_Y H(\mu^y|\nu^y)\dd(f_{\ast}\mu)(y).
    \end{equation}
\end{remark}
\subsection{Sobolev spaces on the torus}\label{sub:Sobolev-spaces-on-the-torus}
We use $\{e_k\}_{k\in\mathbb{Z}}$ to represent the Fourier basis on $\T$ or $e_k(x)=\mathrm{e}^{2\pi\mathrm{i}kx}$ for $k\in\mathbb{Z}$. For simplicity, we define $\langle k\rangle:=\sqrt{1+k^2}$. The norm of the Sobolev space $H^{\alpha}(\T)$, $\alpha\in\R$, is defined by
\begin{equation}\label{eq:def-Halpha}
    \|f\|_{H^{\alpha}}^2:=\sum_{k\in\mathbb{Z}}\langle k\rangle^{2\alpha}|\langle f,e_k\rangle|^2.
\end{equation}
With the definition \eqref{eq:def-Halpha}, one can easily check the dual formula
\begin{equation}\label{eq:dual}
     \|f\|_{H^{-\alpha}}=\sup_{\|\varphi\|_{H^{\alpha}}\leq 1}|\langle\varphi,f\rangle|,
\end{equation}
for any $\alpha\in\R$ and $f\in H^{-\alpha}(\T)$.\par
When $\alpha\in\mathbb{N}$, it is well-known that an equivalent norm for $H^{\alpha}$ is $\|f\|:=(\sum_{j=0}^{\alpha}\|f^{(j)}\|_{L^2}^2)^{\frac{1}{2}}$.
\begin{proposition}\label{prop:Hn}
    Let $n\in\mathbb{N}$. Then there exists a constant $C\geq 1$, such that
    \begin{equation}\label{eq:Hn}
        C^{-1}\|f\|_{H^n}^2\leq\sum_{j=0}^{n}\|f^{(j)}\|_{L^2}^2\leq C\|f\|_{H^n}^2,\quad\forall f\in H^n(\T).
    \end{equation}
\end{proposition}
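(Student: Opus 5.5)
The statement is Proposition~\ref{prop:Hn}: for $n\in\N$ the Fourier norm $\|f\|_{H^n}$ is equivalent to $(\sum_{j=0}^n\|f^{(j)}\|_{L^2}^2)^{1/2}$. I will work directly on the Fourier side, using Parseval's identity on $\T$ together with the formula for Fourier coefficients of derivatives.

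\emph{Step 1: reduction to Fourier coefficients.} First I take $f\in C^\infty(\T)$. Writing $\hat f(k)=\langle f,e_k\rangle$, integration by parts on the torus has no boundary terms, so $\widehat{f^{(j)}}(k)=(2\pi\mathrm{i}k)^j\hat f(k)$. Parseval then gives $\|f^{(j)}\|_{L^2}^2=\sum_k(2\pi k)^{2j}|\hat f(k)|^2$. Summing over $j$ yields
\[
\sum_{j=0}^n\|f^{(j)}\|_{L^2}^2=\sum_{k\in\mathbb{Z}}p_n(k)\,|\hat f(k)|^2,\qquad p_n(k):=\sum_{j=0}^n(2\pi k)^{2j}.
\]

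\emph{Step 2: comparing the weights.} It now suffices to find $C\ge1$ with $C^{-1}\langle k\rangle^{2n}\le p_n(k)\le C\langle k\rangle^{2n}$ for all $k\in\mathbb{Z}$.
\begin{itemize}
\item Upper bound: each term satisfies $(2\pi k)^{2j}\le(2\pi)^{2n}(1+k^2)^n$, so $p_n(k)\le(n+1)(2\pi)^{2n}\langle k\rangle^{2n}$.
\item Lower bound: the binomial expansion gives $(1+k^2)^n=\sum_j\binom nj k^{2j}\le 2^n\max_j k^{2j}$. Since $(2\pi)^{2j}\ge1$, this is at most $2^n p_n(k)$.
\end{itemize}
So $C=\max\{(n+1)(2\pi)^{2n},2^n\}$ works, and it depends only on $n$.

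\emph{Step 3: extension to all of $H^n$.} Here $f^{(j)}$ is understood as a distributional derivative. The identity $\widehat{f^{(j)}}(k)=(2\pi\mathrm{i}k)^j\hat f(k)$ still holds for distributions on $\T$, by testing against $e_{-k}$. Hence the computation in Step 1 remains valid for every $f\in H^n(\T)$ by Parseval, and no density argument is needed.

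The only point requiring a little care is the meaning of $f^{(j)}$ for nonsmooth $f$, handled in Step 3. The rest is elementary.
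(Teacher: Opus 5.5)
Your argument is correct and complete. Parseval together with $|\langle f^{(j)},e_k\rangle|=(2\pi|k|)^j|\langle f,e_k\rangle|$, which holds for distributional derivatives, reduces the claim to the weight comparison $2^{-n}\langle k\rangle^{2n}\le\sum_{j=0}^n(2\pi k)^{2j}\le(n+1)(2\pi)^{2n}\langle k\rangle^{2n}$, and you verify both inequalities correctly.

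The paper gives no proof of this proposition and simply quotes it as well known; what you wrote is the standard argument behind that citation. One harmless convention point: with the paper's bracket $\langle f,e_k\rangle=\int_{\T}f\,e_k$ (no conjugation), the derivative rule reads $(-2\pi\mathrm{i}k)^j$ rather than $(2\pi\mathrm{i}k)^j$, which does not matter since only moduli enter.
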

\section{Dobrushin-type Estimate}\label{sec:Dobrushin-type-estimate}
In this section we give the proof of Theorem~\ref{thm:Dobrushin}, which is rather simple. The estimate \eqref{eq:Dobrushin} follows by an application of the Kantorovich-Rubinstein dual formula(Proposition~\ref{prop:1-Wasserstein}) for the $1$-Wasserstein distance and some direct calculations.
\begin{proof}[Proof of Theorem~\ref{thm:Dobrushin}]
    We will use Proposition ~\ref{prop:1-Wasserstein} to compute the $1$-Wasserstein distance of two measures.\par
    Fix any Lipschitz function $\varphi$ on $\T$ with $\mathrm{Lip}(\varphi)\leq 1$, then we have
    \begin{align*}
        &\langle\varphi,\mu_N(t+1)-\bar{\rho}_{t+1}\rangle\\
        =&\langle\varphi,(F_{\mu_N(t)})_{\ast}\mu_N(t)-(F_{\bar{\rho}_t})_{\ast}\bar{\rho}_t\rangle\\
        =&\langle\varphi,(F_{\mu_N(t)})_{\ast}\mu_N(t)-(F_{\mu_N(t)})_{\ast}\bar{\rho}_t\rangle+\langle\varphi,(F_{\mu_N(t)})_{\ast}\bar{\rho}_t-(F_{\bar{\rho}_t})_{\ast}\bar{\rho}_t\rangle\\
        =&\langle\varphi\circ F\circ\phi_{\mu_N(t)},\mu_N(t)-\bar{\rho}_t\rangle+\langle\varphi\circ F\circ\phi_{\mu_N(t)}-\varphi\circ F\circ\phi_{\bar{\rho}_t},\bar{\rho}_t\rangle\\
        =&\I+\II.
    \end{align*}\par
    For $\I$, since
    $$\phi_{\mu_N(t)}'(x)=1+\Delta\int_{\T}\partial_1 h(x,y)\dd(\mu_N(t))(y),$$
    we have $\mathrm{Lip}(\phi_{\mu_N(t)})\leq 1+\Delta\Vert h\Vert_{C^1}$. And hence
    \begin{align*}
        |\I|&\leq\mathrm{Lip}(\varphi\circ F\circ\phi_{\mu_N(t)})\cdot W_1(\mu_N(t),\bar{\rho}_t)\\
        &\leq\mathrm{Lip}(\varphi)\cdot\mathrm{Lip}(F)\cdot\mathrm{Lip}(\phi_{\mu_N(t)})\cdot W_1(\mu_N(t),\bar{\rho}_t)\\
        &\leq\mathrm{Lip}(F)(1+\Delta\Vert h\Vert_{C^1})\cdot W_1(\mu_N(t),\bar{\rho}_t).
    \end{align*}\par
    For $\II$, since
    $$\phi_{\mu_N(t)}(x)-\phi_{\bar{\rho}_t}(x)=\Delta\int_{\T}h(x,y)\dd(\mu_N(t)-\bar{\rho}_t)(y),$$
    we have $\Vert\phi_{\mu_N(t)}-\phi_{\bar{\rho}_t}\Vert_{L^{\infty}}\leq\Delta\Vert h\Vert_{C^1}\cdot W_1(\mu_N(t),\bar{\rho}_t)$. And thus
    \begin{align*}
        |\II|&\leq\Vert\varphi\circ F\circ\phi_{\mu_N(t)}-\varphi\circ F\circ\phi_{\bar{\rho}_t}\Vert_{L^{\infty}}\\
        &\leq\mathrm{Lip}(\varphi\circ F)\cdot\Vert\phi_{\mu_N(t)}-\phi_{\bar{\rho}_t}\Vert_{L^{\infty}}\\
        &\leq\Delta\mathrm{Lip}(F)\Vert h\Vert_{C^1}\cdot W_1(\mu_N(t),\bar{\rho}_t).
    \end{align*}\par
    Combining the estimates above, we obtain
    $$|\langle\varphi,\mu_N(t+1)-\bar{\rho}_{t+1}\rangle|\leq\mathrm{Lip}(F)(1+2\Delta\Vert h\Vert_{C^1})\cdot W_1(\mu_N(t),\bar{\rho}_t).$$
    Taking supremum over all Lipschitz functions $\varphi$ on $\T$ with $\mathrm{Lip}(\varphi)\leq 1$, by Proposition~\ref{prop:1-Wasserstein} again we get
    $$W_1(\mu_N(t+1),\bar{\rho}_{t+1})\leq\mathrm{Lip}(F)(1+2\Delta\Vert h\Vert_{C^1})\cdot W_1(\mu_N(t),\bar{\rho}_t).$$
    Iterating this inequality for $t$ times gives the desired estimate \eqref{eq:Dobrushin}.
\end{proof}
\begin{remark}\label{rmk:Dobrushin}
    Recall that the Dobrushin-type estimate \eqref{eq:Dobrushin} says that $W_1(\mu_N(t),\bar{\rho}_t)\leq\mathrm{e}^{Ct}W_1(\mu_N(0),\bar{\rho}_0)$, which yields that if we choose the initial data $X_N(0)$ appropriately such that $W_1(\mu_N(0),\bar{\rho}_0)\to 0$ as $N\to\infty$, then $W_1(\mu_N(t),\bar{\rho}_t)\to 0$ as $N\to\infty$ for any $t\geq 0$. This is what we exactly mean when we say in Section~\ref{sec:introduction} that the closeness of the empirical measure $\mu_N$ and the limiting distribution $\bar{\rho}$ can be propagated along with the time in the sense of $1$-Wasserstein distance.
\end{remark}
\section{Relative Entropy Bound and Propagation of Chaos}\label{sec:relative-entropy-bound}
In this section we are going to prove Theorem~\ref{thm:main} and Corollary~\ref{cor:Kacs-chaos}. The idea of proof is as follows. First we use the expression for the push-forward maps and apply the data processing inequality to obtain a one-step inequality $\HN(t+1)\leq \HN(t)+R_t$, where $R_t$ is the integral of some function with respect to the particle density $\rho_{N,t}$. Then we use Taylor's expansion to expand the integrand in $R_t$, and apply the Gibbs' inequality (Proposition~\ref{prop:gibbs}) to change the measure $\rho_{N,t}$ into the tensor product $\bar{\rho}_t^{\otimes N}$. Next we use different large deviation type estimates to bound the first order term and second order terms respectively and obtain that $R_t\leq C\left(\HN(t)+\frac{1}{N}\right)$. Then the estimate \eqref{eq:entropy-estimate} follows by induction. Finally, the Kac's chaos follows by an application of the classical Csisz\'{a}r-Kullback-Pinsker inequality.
\subsection{Large deviation type estimates}\label{sub:large-deviation}
As mentioned before, we need some large deviation type estimates to bound the first order term and second order terms in the Taylor's expansion. The following two results are from Jabin and Wang\cite{JW18}, and we adapt a bit for convenience. They state that under some cancellation rules of the function $\phi$, the expectations of some exponentials related to $\phi$ with respect to the tensorized law $\rho^{\otimes N}$ are uniformly bounded. In \cite{JW18}, Jabin and Wang gave a combinatorial proof for these results. Later, Guo, Liang and Wang\cite{GHW26} used the technique of generating functions to propose a shorter proof.
\begin{proposition}[Jabin and Wang {\cite[Theorem 4]{JW18}}]\label{prop:exp1}
    Let $E$ be a Polish space and $\rho\in\mathcal{P}(E)$ be a probability measure on $E$. Suppose that $\phi:E\times E\to\R$ is a measurable function satisfying the following two cancellation rules:
    \begin{equation}\label{eq:cancellation1}
        \int_E\phi(x,y)\dd\rho(x)=0,\quad\forall y\in E;\qquad
        \int_E\phi(x,y)\dd\rho(y)=0,\quad\forall x\in E.
    \end{equation}
    Then there exists a constant $C_1>0$, such that when $\|\phi\|_{L^{\infty}}\leq C_1$, one has
    \begin{equation}\label{eq:exp1}
        \sup_{N\geq 2}\int_{E^N}   \rho^{\otimes N}(X_N)\exp{\left(\frac{1}{N}\sum_{i,j=1}^{N}\phi(x_i,x_j)\right)}\dd X_N\leq C<+\infty,
    \end{equation}
    where $X_N=(x_1,x_2,\dots,x_N)$ for $N\geq 2$ and $C$ is a generic constant independent of $N$.
\end{proposition}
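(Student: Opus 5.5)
We plan to expand the exponential as a power series and bound the moments of the double sum under $\rho^{\otimes N}$. Write $S_N:=\frac{1}{N}\sum_{i,j=1}^N\phi(x_i,x_j)$. The exponential moment then splits as
\begin{equation*}
    \int\rho^{\otimes N}e^{S_N}=\sum_{k\ge 0}\frac{1}{k!}\,\mathbb{E}_{\rho^{\otimes N}}\bigl[S_N^k\bigr].
\end{equation*}
It therefore suffices to show $\mathbb{E}[S_N^k]\le (C\|\phi\|_{L^\infty})^k\,k!$ up to a factor that is summable in $k$, uniformly in $N$. Then taking $\|\phi\|_{L^\infty}\le C_1$ small makes the series converge to a bound independent of $N$.

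First we separate the diagonal terms $i=j$. These contribute $\frac{1}{N}\sum_i\phi(x_i,x_i)$, which is bounded by $\|\phi\|_{L^\infty}$. By Cauchy--Schwarz, $\mathbb{E}e^{S_N}\le(\mathbb{E}e^{2S_N^{\mathrm{off}}})^{1/2}e^{\|\phi\|_\infty}$, so we only need to handle the off-diagonal sum $S_N^{\mathrm{off}}=\frac1N\sum_{i\ne j}\phi(x_i,x_j)$, at the cost of doubling $\phi$.

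Next we expand $\mathbb{E}[(S_N^{\mathrm{off}})^k]=N^{-k}\sum\mathbb{E}\prod_{m=1}^k\phi(x_{i_m},x_{j_m})$ over $k$-tuples of ordered pairs with $i_m\ne j_m$. Each tuple is viewed as a multigraph on the vertex set $\{1,\dots,N\}$ with $k$ edges. By the two cancellation rules (integrate out a vertex of degree one, using $\int\phi(x,y)\dd\rho(x)=0$ or $\int\phi(x,y)\dd\rho(y)=0$), the expectation vanishes whenever some vertex appears in exactly one edge. Hence every vertex in a contributing multigraph has degree at least $2$, so the number of distinct vertices is $q\le k$. Each surviving term is bounded by $\|\phi\|_\infty^k$.

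It remains to count the surviving tuples. Choosing the $q$ vertices gives at most $N^q/q!$ options, and the number of ways to distribute $2k$ edge-endpoints among $q$ labelled vertices, each of degree $\ge 2$, is at most $q^{2k}$. This gives
\begin{equation*}
    \mathbb{E}\bigl[(S_N^{\mathrm{off}})^k\bigr]\le\|\phi\|_\infty^k\sum_{q\le k}\frac{N^{q-k}q^{2k}}{q!}.
\end{equation*}
Since $q\le k$, we have $N^{q-k}\le 1$, and the term $q=k$ gives roughly $k^{2k}/k!\sim e^k k^k$, which is comparable to $(Ce)^k k!$. Dividing by $k!$ leaves a geometric series $\sum_k(C\|\phi\|_\infty)^k$, which is summable for $\|\phi\|_\infty$ small.

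The main obstacle is this combinatorial count. The crude bound $q^{2k}$ must be refined to something like $C^k k!$, uniformly across all $q\le k$. Two regimes need checking. For small $q$ the count is small but $N^{q-k}$ is the only available gain; for $q$ close to $k$ the multigraph is nearly $2$-regular and the count is about $k!\,C^k$. I would carry out the count by bounding the number of multigraphs with all degrees $\ge2$ via the configuration/pairing model, giving at most $(2k)!/(2^k k!)\cdot\binom{N}{q}N^{-k}\le C^k k!$. If that gets messy, the fallback is the generating-function approach of Guo--Liang--Wang: bound $\mathbb{E}e^{\lambda S_N^{\mathrm{off}}}$ through a Hoeffding-type decoupling of degenerate U-statistics. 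That route also yields the stated $N$-uniform bound for small $\|\phi\|_\infty$.
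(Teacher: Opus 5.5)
Your skeleton (Taylor-expanding the exponential, discarding the diagonal, using the two cancellation rules to kill every tuple with a vertex of degree one, and bounding the surviving tuples with $q\le k$ distinct indices by $\binom{N}{q}q^{2k}$) is correct. It is the combinatorial route of Jabin and Wang that the paper cites; the paper gives no proof of its own.

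The gap is the final summation, which you flag yourself but misdiagnose. Using only $N^{q-k}\le 1$, you examine just the term $q=k$. The resulting $N$-free bound $\sum_{q\le k}q^{2k}/q!$ is \emph{not} of size $C^kk!$. Already at $q\approx k/2$ the quantity $q^{2k}/(q!\,k!)$ grows like $k^{k/2}$ up to exponential factors. So the series in $k$ diverges for every $\|\phi\|_{L^\infty}>0$, and ``dividing by $k!$ leaves a geometric series'' does not follow. Your proposed remedy, refining the $q^{2k}$ count through a configuration model, targets the wrong factor, and the displayed bound there is not justified. The fallback to Guo--Liang--Wang is a citation, not a proof.

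The missing observation is that the gain must come from $N^{q-k}$, via the fact that the $q$ distinct indices lie in $\{1,\dots,N\}$. Since $q\le N$ and $q-k\le 0$, one has $N^{q-k}\le q^{q-k}$. Combined with $q^q/q!\le e^q$ and $k^k\le e^k k!$, this gives
\[
\sum_{q=2}^{\min(k,N)}N^{q-k}\frac{q^{2k}}{q!}\le\sum_{q=2}^{k}q^{k}\,\frac{q^{q}}{q!}\le k\,e^{k}k^{k}\le k\,e^{2k}\,k!.
\]
Hence $|\mathbb{E}(S_N^{\mathrm{off}})^k|\le k\,(e^{2}\|\phi\|_{L^\infty})^k\,k!$, and therefore $\mathbb{E}\,e^{S_N}\le e^{\|\phi\|_{L^\infty}}\bigl(1+\sum_{k\ge1}k\,(e^{2}\|\phi\|_{L^\infty})^k\bigr)$. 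This is finite and independent of $N$ once $\|\phi\|_{L^\infty}<e^{-2}$. So your crude count $q^{2k}$ is already sufficient, and the regime $k>N$ needs no separate treatment because $q\le\min(k,N)$.

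Two minor points. The Cauchy--Schwarz step is unnecessary, since $e^{S_N}\le e^{\|\phi\|_{L^\infty}}e^{S_N^{\mathrm{off}}}$ pointwise. Exchanging the sum and the expectation is justified because $|S_N|\le N\|\phi\|_{L^\infty}$.
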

\begin{proposition}[Jabin and Wang {\cite[Theorem 3]{JW18}}]\label{prop:exp2}
     Let $E$ be a Polish space and $\rho\in\mathcal{P}(E)$ be a probability measure on $E$. Suppose that $\phi:E\times E\to\R$ is a measurable function satisfying the cancellation rule
     \begin{equation}\label{eq:cancellation2}
         \int_E\phi(x,y)\dd\rho(y)=0,\quad\forall x\in E.
     \end{equation}
     Then there exists a constant $C_2>0$, such that when $\|\phi\|_{L^{\infty}}\leq C_2$, one has
     \begin{equation}\label{eq:exp2}
         \sup_{N\geq 2}\int_{E^N}   \rho^{\otimes N}(X_N)\exp{\left(\frac{1}{N}\sum_{j,k=1}^{N}\phi(x_1,x_j)\phi(x_1,x_k)\right)}\dd X_N\leq C<+\infty,
     \end{equation}
     where $X_N=(x_1,x_2,\dots,x_N)$ for $N\geq 2$ and $C$ is a generic constant independent of $N$.
\end{proposition}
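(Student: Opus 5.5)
We would prove Proposition~\ref{prop:exp2} by conditioning on $x_1$. Once $x_1$ is frozen, the exponent is the square of a sum of independent, centered, bounded random variables. That quantity can be linearized with a Gaussian (Hubbard--Stratonovich) trick and then controlled with Hoeffding's lemma. No combinatorics is needed, because the cancellation rule \eqref{eq:cancellation2} is exactly the statement that these conditional variables have mean zero.

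\textbf{Step 1 (reduction to a square of a sum).} Write $\eps:=\|\phi\|_{L^\infty}$. The exponent is
$$\frac1N\sum_{j,k=1}^N\phi(x_1,x_j)\phi(x_1,x_k)=\frac1N\Big(\phi(x_1,x_1)+S\Big)^2,\qquad S:=\sum_{j=2}^N\phi(x_1,x_j).$$
Using $(a+b)^2\le 2a^2+2b^2$ and $|\phi(x_1,x_1)|\le\eps$, the exponent is at most $2\eps^2/N+\frac2N S^2\le \eps^2+\frac2N S^2$. This separates the harmless diagonal term $j=1$, which is not centered, from the rest.

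\textbf{Step 2 (conditional structure and Gaussian linearization).} Fix $x_1\in E$. Under $\rho^{\otimes N}$, the variables $Y_j:=\phi(x_1,x_j)$, $j=2,\dots,N$, are i.i.d. with law determined by $\rho$. By \eqref{eq:cancellation2} they satisfy $\mathbb E Y_j=\int_E\phi(x_1,y)\dd\rho(y)=0$, and $|Y_j|\le\eps$. Let $g\sim\mathcal N(0,1)$ be independent of everything. We use the identity $\mathrm e^{a^2/2}=\mathbb E_g\,\mathrm e^{ag}$ with $a=2S/\sqrt N$, which gives
$$\exp\Big(\frac2N S^2\Big)=\mathbb E_g\exp\Big(\frac{2g}{\sqrt N}S\Big).$$
By Fubini we may take the expectation over $x_2,\dots,x_N$ first. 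Independence and Hoeffding's lemma ($\mathbb E\,\mathrm e^{sY}\le \mathrm e^{s^2\eps^2/2}$ for centered $|Y|\le\eps$) then give
$$\mathbb E\Big[\exp\Big(\tfrac{2g}{\sqrt N}S\Big)\,\Big|\,x_1,g\Big]\le \exp\Big(\frac{2g^2\eps^2(N-1)}{N}\Big)\le \mathrm e^{2\eps^2 g^2}.$$

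\textbf{Step 3 (conclusion).} Since $\mathbb E_g\,\mathrm e^{2\eps^2g^2}=(1-4\eps^2)^{-1/2}<\infty$ whenever $\eps^2<1/4$, we obtain for every $x_1$ and every $N\ge2$
$$\int_{E^{N-1}}\rho^{\otimes(N-1)}\exp\Big(\frac1N\sum_{j,k}\phi(x_1,x_j)\phi(x_1,x_k)\Big)\dd x_2\cdots\dd x_N\le \mathrm e^{\eps^2}(1-4\eps^2)^{-1/2}.$$
Integrating against $\rho(x_1)$ gives \eqref{eq:exp2} with, for instance, $C_2=1/4$ and $C=\mathrm e^{1/16}\sqrt{4/3}$, uniformly in $N$.

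The only point needing care is the non-centered diagonal contribution $\phi(x_1,x_1)$, together with the fact that $x_1$ appears in every summand. Both are handled in Step~1 and by conditioning on $x_1$ in Step~2. Since the cancellation \eqref{eq:cancellation2} holds for every $x$, the Hoeffding bound is uniform in $x_1$, so we do not expect a genuine obstacle. In contrast, Proposition~\ref{prop:exp1} would need the two-sided cancellation \eqref{eq:cancellation1} and a more delicate argument, because its exponent is a genuine $U$-statistic of order two rather than a square of a conditionally independent sum.
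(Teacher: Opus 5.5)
Your proof is correct, but it follows a different route from the one the paper relies on. The paper gives no argument of its own; it cites Jabin--Wang, who expand the exponential in its power series. The $m$-th term becomes $\frac{1}{m!\,N^m}\int\rho^{\otimes N}\bigl(\sum_{j}\phi(x_1,x_j)\bigr)^{2m}$, and they bound it by counting index tuples $(j_1,\dots,j_{2m})$ in which every index other than $1$ occurs at least twice. All other tuples vanish by the cancellation rule \eqref{eq:cancellation2}, and the resulting bounds are summable against $1/m!$ once $\|\phi\|_{L^\infty}$ is small. The Guo--Liang--Wang proof mentioned in the paper replaces this combinatorial counting with a generating-function argument.

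You bypass the moment counting entirely. Once $x_1$ is frozen, $S$ is a sum of i.i.d.\ bounded centered variables, so Hoeffding's lemma makes it sub-Gaussian with variance proxy $(N-1)\eps^2$. The Gaussian linearization $\mathrm{e}^{a^2/2}=\mathbb{E}_g\mathrm{e}^{ag}$ then turns this into integrability of $\exp(2S^2/N)$, which is the standard fact that $\mathbb{E}\mathrm{e}^{\lambda Z^2}<\infty$ for a sub-Gaussian $Z$ and small $\lambda$. Your computations check out:
\begin{itemize}
\item the diagonal split uses $N\geq 2$ correctly;
\item the Hoeffding exponent $2g^2\eps^2(N-1)/N$ is right;
\item $\mathbb{E}_g\mathrm{e}^{2\eps^2g^2}=(1-4\eps^2)^{-1/2}$ is right.
\end{itemize}

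Your route is shorter, gives explicit constants (any $C_2<1/2$ works, with $C=\mathrm{e}^{C_2^2}(1-4C_2^2)^{-1/2}$), and shows plainly why only the one-sided cancellation is needed. The combinatorial route gives uniformity. With the two-sided cancellation \eqref{eq:cancellation1}, the same counting scheme proves Proposition~\ref{prop:exp1}, where the exponent $\frac1N\sum_{i,j}\phi(x_i,x_j)$ is a degenerate $U$-statistic. No conditioning reduces that exponent to the square of a sum of independent terms, so your Gaussian/Hoeffding trick does not carry over, as you correctly anticipate.

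One small point: your bounds $|\phi(x_1,x_1)|\leq\eps$ and $|Y_j|\leq\eps$ for every frozen $x_1$ read $\|\phi\|_{L^\infty}$ as a pointwise bound. This matters because the diagonal may be a null set for $\rho\otimes\rho$. It is harmless here, since the kernels to which the paper applies the proposition ($\Delta\bar h_t$ and $\Delta\,\partial_1\bar h_t/\phi_{\bar\rho_t}'$) are continuous on $\T\times\T$.
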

In the proof below, Proposition~\ref{prop:exp1} and Proposition~\ref{prop:exp2} will be used to bound the first order term and second order terms respectively.
\subsection{A one-step inequality}\label{sub:one-step}
In the continuous-time case\cite{JW18}, the estimate of type \eqref{eq:entropy-estimate} follows from computing the time derivative of the rescaled relative entropy $\HN(t)$. In the discrete-time case, the corresponding notion is the difference $\HN(t+1)-\HN(t)$. Therefore, we first use the expression for the push-forward maps and apply the data processing inequality to obtain a one-step inequality as follows.
\begin{proposition}\label{prop:one-step}
    Assume $\Delta<\Delta_1$, where $\Delta_1$ is the constant in Corollary~\ref{cor:push-forward}. Then under the assumption of Theorem~\ref{thm:main}, one has
    \begin{equation}\label{eq:one-step}
    \HN(t+1)\leq \HN(t)+R_t,\quad t=0,1,\dots,T-1,
    \end{equation}
    where
    \begin{equation}\label{eq:def-Rt}
        R_t:=\frac{1}{N}\int_{\T^N}\rho_{N,t}(X)\left[\sum_{i=1}^N\log{\frac{\bar{\rho}_t(x_i)}{\bar{\rho}_t(\phi_{\bar{\rho}_t}^{-1}(\Phi_i(X)))}}+\log{\frac{\prod_{i=1}^N \phi_{\bar{\rho}_t}'(\phi_{\bar{\rho}_t}^{-1}(\Phi_i(X)))}{|D\Phi|(X)}}\right]\dd X.
    \end{equation}
\end{proposition}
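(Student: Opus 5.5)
The plan is to factor one time step of the particle dynamics into two maps and apply the data processing inequality (Proposition~\ref{prop:data-processing}) to the outer one. By \eqref{eq:particle-system} and \eqref{eq:def-Phi}, $X(t+1)=F^{\otimes N}(\Phi(X(t)))$, where $F^{\otimes N}(y_1,\dots,y_N):=(F(y_1),\dots,F(y_N))$ is continuous on $\T^N$. Hence
\[
\rho_{N,t+1}=(F^{\otimes N})_{\ast}\Phi_{\ast}\rho_{N,t}.
\]
On the limiting side, \eqref{eq:mf-dynamics} gives $\bar\rho_{t+1}=F_{\ast}(\phi_{\bar\rho_t})_{\ast}\bar\rho_t$. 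Push-forward by a product map commutes with tensorization, so
\[
\bar\rho_{t+1}^{\otimes N}=(F^{\otimes N})_{\ast}\big((\phi_{\bar\rho_t})_{\ast}\bar\rho_t\big)^{\otimes N}.
\]
Proposition~\ref{prop:data-processing} with $f=F^{\otimes N}$ then yields
\[
N\HN(t+1)\le H\Big(\Phi_{\ast}\rho_{N,t}\,\Big|\,\big((\phi_{\bar\rho_t})_{\ast}\bar\rho_t\big)^{\otimes N}\Big).
\]
This is the only point where $F$ enters, which is consistent with assumption \textbf{(A1.2)} requiring $F$ to be merely continuous.

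Next I compute the right-hand side explicitly. If $\HN(t)=\infty$ there is nothing to prove, so I assume $\rho_{N,t}\ll\bar\rho_t^{\otimes N}$. Since $\bar\rho_t\ge\kappa$ and $\bar\rho_t$ is bounded, $\rho_{N,t}$ is then a density on $\T^N$. Because $\Delta<\Delta_1$, Corollary~\ref{cor:push-forward} applies: $\Phi$ and $\phi_{\bar\rho_t}$ are diffeomorphisms, and \eqref{eq:push-forward} gives the densities
\[
(\Phi_{\ast}\rho_{N,t})(Y)=\frac{\rho_{N,t}(\Phi^{-1}(Y))}{|D\Phi|(\Phi^{-1}(Y))},\qquad
\big((\phi_{\bar\rho_t})_{\ast}\bar\rho_t\big)(y)=\frac{\bar\rho_t(\phi_{\bar\rho_t}^{-1}(y))}{\phi_{\bar\rho_t}'(\phi_{\bar\rho_t}^{-1}(y))}.
\]
I write the relative entropy as an integral in $Y$ and change variables $Y=\Phi(X)$, so that $\dd Y=|D\Phi|(X)\dd X$ and the Jacobian cancels the denominator of the first density. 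This gives
\[
\int_{\T^N}\rho_{N,t}(X)\Big[\log\frac{\rho_{N,t}(X)}{|D\Phi|(X)}-\sum_{i=1}^N\log\frac{\bar\rho_t(\phi_{\bar\rho_t}^{-1}(\Phi_i(X)))}{\phi_{\bar\rho_t}'(\phi_{\bar\rho_t}^{-1}(\Phi_i(X)))}\Big]\dd X .
\]

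Finally, I subtract
\[
N\HN(t)=\int_{\T^N}\rho_{N,t}(X)\Big[\log\rho_{N,t}(X)-\sum_{i=1}^N\log\bar\rho_t(x_i)\Big]\dd X .
\]
The $\log\rho_{N,t}$ terms cancel. Regrouping what remains gives exactly $N R_t$ with $R_t$ as in \eqref{eq:def-Rt}, which proves \eqref{eq:one-step}.

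The main obstacle is justifying this splitting and the change of variables. I must check that the bracketed function in $R_t$ is bounded, so that the difference of the two integrals is well defined and is not of the form $\infty-\infty$. This follows from three facts:
\begin{itemize}
\item $\kappa\le\bar\rho_t\le K$;
\item $\phi_{\bar\rho_t}'\ge 1-\Delta\|h\|_{C^1}>0$ and $\phi_{\bar\rho_t}'\le 1+\Delta\|h\|_{C^1}$;
\item $|D\Phi|$ is bounded above and bounded away from zero, uniformly on the compact set $\T^N$, for $\Delta<\Delta_1$.
\end{itemize}
With the bracket bounded, $\int\rho_{N,t}\log\rho_{N,t}$ is finite exactly when $\HN(t)$ is. So in the case $\HN(t)<\infty$ every term above is finite, and the manipulations are legitimate.
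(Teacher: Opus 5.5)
Your proposal is correct and follows the same route as the paper. Both arguments factor the step as $F^{\otimes N}\circ\Phi$, remove $F^{\otimes N}$ with the data processing inequality, change variables $Y=\Phi(X)$ using the push-forward formulas of Corollary~\ref{cor:push-forward}, and subtract $\HN(t)$. Your extra check that the bracket defining $R_t$ is bounded (so the subtraction is never $\infty-\infty$, and the case $\HN(t)=\infty$ is trivial) supplies rigor the paper leaves implicit.
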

\begin{proof}
    According to the equations \eqref{eq:particle-system} and \eqref{eq:mf-dynamics}, and applying the data processing inequality(Proposition~\ref{prop:data-processing}), we have
    \begin{align*}
        \HN(t+1)&=\frac{1}{N}\int_{\T^N}\rho_{N,t+1}(X)\log{\frac{\rho_{N,t+1}(X)}{\bar{\rho}_{t+1}^{\otimes N}(X)}}\dd X\\
        &=\frac{1}{N}\int_{\T^N}((F^{\otimes N})_{\ast}\Phi_{\ast}\rho_{N,t})(X)\log{\frac{((F^{\otimes N})_{\ast}\Phi_{\ast}\rho_{N,t})(X)}{((F^{\otimes N})_{\ast}(\phi_{\bar{\rho}_t}^{\otimes N})_{\ast}\bar{\rho}_t^{\otimes N})(X)}}\dd X\\
        &\leq\frac{1}{N}\int_{\T^N}(\Phi_{\ast}\rho_{N,t})(X)\log{\frac{(\Phi_{\ast}\rho_{N,t})(X)}{((\phi_{\bar{\rho}_t}^{\otimes N})_{\ast}\bar{\rho}_t^{\otimes N})(X)}}\dd X\\
        &=\frac{1}{N}\int_{\T^N}\rho_{N,t}(X)\log{\frac{(\Phi_{\ast}\rho_{N,t})(\Phi(X))}{\prod_{i=1}^{N}((\phi_{\bar{\rho}_t})_{\ast}\bar{\rho}_t)(\Phi_i(X))}}\dd X.
    \end{align*}
    Then we can use the explicit expressions for push-forward maps \eqref{eq:push-forward} to continue with
    \begin{align*}
        \HN(t+1)&\leq\frac{1}{N}\int_{\T^N}\rho_{N,t}(X)\log{\frac{\rho_{N,t}(X)/|D\Phi|(X)}{\prod_{i=1}^{N}(\bar{\rho}_t(\phi_{\bar{\rho}_t}^{-1}(\Phi_i(X)))/\phi_{\bar{\rho}_t}'(\phi_{\bar{\rho}_t}^{-1}(\Phi_i(X))))}}\dd X\\
        &=\frac{1}{N}\int_{\T^N}\rho_{N,t}(X)\log{\frac{\rho_{N,t}(X)}{\bar{\rho}_t^{\otimes N}(X)}}\dd X+R_t\\
        &=\HN(t)+R_t.
    \end{align*}
\end{proof}
\subsection{Estimates on $R_t$ and completing the proof of Theorem~\ref{thm:main}}\label{sub:estimates-on-Rt}
Now we need to estimate the term $R_t$. We want to bound $R_t$ by $C\HN(t)$ plus some small terms, and then we can complete the proof by induction. Before that, for the readers' convenience, we give the full statement of the Gibbs' inequality here. It is used to change the measure that expectations are taken with respect to from the particle density $\rho_{N,t}$ to the tensor product $\bar{\rho}_t^{\otimes N}$. For a short proof, one can see \cite[Lemma 1]{JW18}.
\begin{proposition}\label{prop:gibbs}
     Let $E$ be a Polish space and $\mu,\nu\in\mathcal{P}(E)$ be two probability measures on $E$. Suppose that $\Phi:E\to\R$ is a measurable function. Then $\forall\lambda>0$,
     \begin{equation}\label{eq:gibbs}
         \int_E\Phi(x)\dd\mu(x)\leq\frac{1}{\lambda}H(\mu|\nu)+\frac{1}{\lambda}\log{\int_{E}\mathrm{e}^{\lambda\Phi(x)}\dd\nu(x)}.
     \end{equation}
\end{proposition}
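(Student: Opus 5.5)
Since $\lambda>0$ is fixed, we may replace $\Phi$ by $\lambda\Phi$, so it suffices to prove the case $\lambda=1$:
\[
\int_E\Phi\,\dd\mu\le H(\mu|\nu)+\log\int_E \mathrm{e}^{\Phi}\dd\nu .
\]
The plan is to reduce this to the nonnegativity of relative entropy, after tilting $\nu$ exponentially by $\Phi$. First dispose of the trivial cases. If $H(\mu|\nu)=\infty$ or $\int \mathrm{e}^{\Phi}\dd\nu=\infty$, the right-hand side is $+\infty$ (with the convention that it is read as $+\infty$ whenever one term is), and there is nothing to prove. Otherwise $\mu\ll\nu$; write $f=\dd\mu/\dd\nu$ and $Z=\int \mathrm{e}^{\Phi}\dd\nu\in(0,\infty)$. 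Here $Z>0$ because $\mathrm{e}^{\Phi}>0$.

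Next define the tilted probability measure $\dd\nu_\Phi:=Z^{-1}\mathrm{e}^{\Phi}\dd\nu$. It is equivalent to $\nu$, so $\mu\ll\nu_\Phi$ with density $f Z \mathrm{e}^{-\Phi}$. Formally,
\[
0\le H(\mu|\nu_\Phi)=\int \log\bigl(fZ\mathrm{e}^{-\Phi}\bigr)\dd\mu=H(\mu|\nu)+\log Z-\int\Phi\,\dd\mu,
\]
which is exactly the claim. The nonnegativity of $H(\mu|\nu_\Phi)$ follows from Jensen's inequality applied to the convex function $s\mapsto s\log s$ under $\nu_\Phi$.

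The main obstacle is justifying the splitting of the logarithm when $\int\Phi\,\dd\mu$ may not be well defined or finite. To handle this, I would first prove the inequality for bounded truncations $\Phi_n=\max(\min(\Phi,n),-n)$, where every integral is finite and the computation above is rigorous. Note that $\int f\log f\,\dd\nu$ is well defined in $(-\infty,\infty]$, since its negative part is bounded by $1/\mathrm{e}$.

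Then pass to the limit. Split $\Phi_n=\Phi_n^+-\Phi_n^-$. Monotone convergence handles $\int\Phi_n^+\dd\mu$. For the negative part, the terms $-\int\Phi_n^-\dd\mu$ decrease to $-\int\Phi^-\dd\mu$. On the right-hand side, $\int \mathrm{e}^{\Phi_n}\dd\nu\to Z$ by dominated convergence on $\{\Phi\le 0\}$ and monotone convergence on $\{\Phi>0\}$.

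If $\int\Phi\,\dd\mu$ is of the form $\infty-\infty$, the statement should be interpreted with $\Phi^-$ integrable, or with $\Phi$ bounded below, which is the case in the applications. Taking the limit $n\to\infty$ then gives the inequality for $\lambda=1$, and rescaling $\Phi$ by $\lambda$ gives the general case.
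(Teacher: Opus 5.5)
Your proof is correct and follows the standard argument behind \cite[Lemma 1]{JW18}, which the paper cites rather than proves: tilt $\nu$ by the normalized density $Z^{-1}\mathrm{e}^{\lambda\Phi}$, then use $H(\mu|\nu_\Phi)\geq 0$, which follows from Jensen's inequality. Your truncation step extends the argument from bounded $\Phi$ (the setting of \cite{JW18} and of every application in the paper) to general measurable $\Phi$; note also that when the right-hand side is finite, your truncated inequality applied to $\min(\Phi^+,n)$ already gives $\int_E\Phi^+\dd\mu\leq H(\mu|\nu)+\log(1+Z)<\infty$, so the $\infty-\infty$ caveat only arises when the right-hand side is $+\infty$.
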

Now we can propose the estimates on $R_t$, which is the major part of this section.
\begin{proposition}\label{prop:estimate-Rt}
    Let $R_t$ be as in Proposition~\ref{prop:one-step}. Then under the assumption of Theorem~\ref{thm:main}, there exists a constant $\Delta_0>0$, such that when $\Delta<\Delta_0$, one has the estimate
    \begin{equation}\label{eq:estimate-Rt}
        R_t\leq C\left(\HN(t)+\frac{1}{N}\right),
    \end{equation}
    where $\Delta_0$ and $C$ depend on $\| h\|_{C^3},\kappa$ and $K$.
\end{proposition}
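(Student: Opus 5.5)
We expand the integrand of $R_t$ around the mean-field quantities. For each $i$ set
\[
\delta_i(X):=\Phi_i(X)-\phi_{\bar{\rho}_t}(x_i)=\frac{\Delta}{N}\sum_{j=1}^N\psi(x_i,x_j),\qquad \psi(x,y):=h(x,y)-\int_{\T}h(x,z)\bar{\rho}_t(z)\dd z .
\]
Then $\int\psi(x,y)\bar{\rho}_t(y)\dd y=0$ for every $x$, and $\|\psi\|_{C^2}\le C\|h\|_{C^2}$. Write $g:=\phi_{\bar{\rho}_t}^{-1}$; this is a $C^3$ diffeomorphism with bounds depending on $\|h\|_{C^3}$ once $\Delta<\Delta_1$. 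We have $g(\Phi_i(X))=x_i+g'(\phi_{\bar{\rho}_t}(x_i))\delta_i+O(\delta_i^2)$.

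Taylor expansion of $\log\bar{\rho}_t$, using $\bar{\rho}_t\ge\kappa$ and $\|\bar{\rho}_t\|_{C^2}\le K$, gives
\[
\log\frac{\bar{\rho}_t(x_i)}{\bar{\rho}_t(g(\Phi_i(X)))}=-a(x_i)\delta_i+O(\delta_i^2),
\]
where $a=(\log\bar{\rho}_t)'\cdot g'\circ\phi_{\bar{\rho}_t}$ is bounded. The Jacobian term splits into two pieces. First, $\log\phi'_{\bar{\rho}_t}(g(\Phi_i))=\log\phi'_{\bar{\rho}_t}(x_i)+b(x_i)\delta_i+O(\delta_i^2)$. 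Second, $D\Phi=\diag(\phi_{\mu_N}'(x_i))+\frac{\Delta}{N}(\partial_2h(x_i,x_j))_{ij}$, so
\[
\log|D\Phi|=\sum_i\log\phi'_{\mu_N}(x_i)+\log\det\Bigl(I+\tfrac{\Delta}{N}A\Bigr),
\]
with $A_{ij}=\partial_2h(x_i,x_j)/\phi'_{\mu_N}(x_i)$. Here
\[
\log\phi'_{\mu_N}(x_i)-\log\phi'_{\bar{\rho}_t}(x_i)=c(x_i)\frac{\Delta}{N}\sum_j\partial_1\psi(x_i,x_j)+O\Bigl(\bigl(\tfrac{\Delta}{N}\sum_j\partial_1\psi(x_i,x_j)\bigr)^2\Bigr).
\]
The determinant term is $\frac{\Delta}{N}\tr A+O(\|\tfrac{\Delta}{N}A\|_{\HS}^2)=O(1)$ in total, since $\tr A=O(N)$ and $\|A\|_{\HS}^2=O(N^2)$. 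That is only $O(1)$ rather than $O(N)$, so after dividing by $N$ it contributes $O(1/N)$. Summing up, $NR_t/N$ becomes
\[
R_t\le\frac{1}{N}\mathbb{E}_{\rho_{N,t}}\Bigl[\frac{1}{N}\sum_{i,j}\Lambda(x_i,x_j)\Bigr]+\frac{C}{N}\sum_i\mathbb{E}_{\rho_{N,t}}\Bigl[\Bigl(\frac{1}{N}\sum_j\psi(x_i,x_j)\Bigr)^2+\Bigl(\frac{1}{N}\sum_j\partial_1\psi(x_i,x_j)\Bigr)^2\Bigr]+\frac{C}{N}.
\]
The first-order kernel is $\Lambda(x,y)=\Delta[(b-a)(x)\psi(x,y)-c(x)\partial_1\psi(x,y)]$.

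For the first-order term, $\Lambda$ has zero mean in $y$ for each $x$, but the $x$-cancellation in \eqref{eq:cancellation1} is not automatic. This is the main obstacle. We must check that $\int\Lambda(x,y)\bar{\rho}_t(x)\dd x=0$. This should follow from the mean-field identity, namely that $\bar{\rho}_{t}$ pushed by $\phi_{\bar{\rho}_t}$ is exactly the reference density appearing in the denominator. Equivalently, linearizing the exact identity $\int\bar{\rho}_t(g(z))g'(z)\dd z=1$ under perturbations $\phi_{\bar{\rho}_t}\to\phi_{\bar{\rho}_t}+\Delta\psi(\cdot,y)$ shows that
\[
-a\psi+b\psi-c\,\partial_1\psi
\]
is an exact $x$-divergence $-\partial_x(\bar{\rho}_t\psi(\cdot,y))/\bar{\rho}_t$ up to sign conventions. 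Hence it integrates to zero against $\bar{\rho}_t$. If residual diagonal or mean terms appear, I would subtract $\int\Lambda(x,\cdot)\bar{\rho}_t$, which is $O(1/N)$ after accounting for the $i=j$ terms.

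With both cancellations in hand, we apply Proposition~\ref{prop:gibbs} with $\mu=\rho_{N,t}$, $\nu=\bar{\rho}_t^{\otimes N}$ and $\lambda=N$. This bounds the first term by $\HN(t)+\frac1N\log\int\bar{\rho}_t^{\otimes N}\exp(\frac1N\sum\Lambda)$. Proposition~\ref{prop:exp1} makes the log bounded once $\|\Lambda\|_\infty\le C_1$, which holds for $\Delta$ small.

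For the quadratic terms we use symmetry of $\rho_{N,t}$, which is inherited from (A3) and the symmetric dynamics, to reduce to $i=1$. We then apply Gibbs with $\lambda=N$ to
\[
\frac{\gamma}{N}\sum_{j,k}\psi(x_1,x_j)\psi(x_1,x_k),
\]
rescaling by a small $\gamma$ so that $\|\sqrt{\gamma}\psi\|_\infty\le C_2$. Proposition~\ref{prop:exp2} then bounds this term by $C(\HN(t)+1/N)$, and the same holds for $\partial_1\psi$, which also has zero $\bar{\rho}_t$-mean in $y$. The factors of $\Delta^2$ only help. The Taylor remainders are uniformly bounded by the same squares, because $|\delta_i|\le C\Delta$ keeps every argument in a compact region where the $C^3$ bounds apply. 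Collecting all pieces gives \eqref{eq:estimate-Rt}, with $\Delta_0\le\Delta_1$ chosen so that both smallness conditions hold.
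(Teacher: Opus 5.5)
Your proposal is correct and is essentially the paper's proof. It uses the same split of $R_t$ into the density ratio, the $\phi'_{\bar\rho_t}$ ratio and the Jacobian, and the same combined first-order kernel $\Lambda=K_t^1+K_t^2+K_t^3$, controlled by Gibbs and Proposition~\ref{prop:exp1}. The quadratic remainders get the same Gibbs plus Proposition~\ref{prop:exp2} bound; factoring $\diag(\phi'_{\mu_N}(x_i))$ rather than $\diag(\phi'_{\bar\rho_t}(x_i))$ out of $D\Phi$ is only a cosmetic variant.

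Two small corrections are needed. First, the $x$-cancellation is a direct computation,
\[
\bar\rho_t(x)\Lambda(x,y)=-\Delta\,\partial_x\bigl(\bar\rho_t(x)\psi(x,y)/\phi'_{\bar\rho_t}(x)\bigr),
\]
whereas your formula drops the factor $1/\phi'_{\bar\rho_t}$. Your fallback of subtracting $\int\Lambda(x,\cdot)\bar\rho_t(x)\dd x$ would leave an $O(\Delta)$ one-body term rather than an $O(1/N)$ one, but it is unnecessary. Second, you may not invoke (A3) to get symmetry of $\rho_{N,t}$, since Theorem~\ref{thm:main} does not assume it. Instead, apply Gibbs separately for each $i$ with the full entropy and use only the exchangeability of $\bar\rho_t^{\otimes N}$, as the paper does.
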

\begin{proof}
    We first rewrite $R_t$ as
    \begin{align*}
        R_t=&\frac{1}{N}\int_{\T^N}\rho_{N,t}(X)\sum_{i=1}^N\log{\frac{\bar{\rho}_t(x_i)}{\bar{\rho}_t(\phi_{\bar{\rho}_t}^{-1}(\Phi_i(X)))}}\dd X\\
        &+\frac{1}{N}\int_{\T^N}\rho_{N,t}(X)\sum_{i=1}^{N}\log{\frac{\phi_{\bar{\rho}_t}'(\phi_{\bar{\rho}_t}^{-1}(\Phi_i(X)))}{\phi_{\bar{\rho}_t}'(x_i)}}\dd X\\
        &+\frac{1}{N}\int_{\T^N}\rho_{N,t}(X)\log{\frac{\prod_{i=1}^{N}\phi_{\bar{\rho}_t}'(x_i)}{|D\Phi|(X)}}\dd X\\
        =&\I+\II+\III.
    \end{align*}
    Then we are going to estimate the three terms above respectively.\par
    For $\I$, we denote $f_t:=-\log{\bar{\rho}_t\circ\phi_{\bar{\rho}_t}^{-1}}$. Then using Taylor's expansion, we obtain
    \begin{align*}
        \I=&\frac{1}{N}\int_{\T^N}\rho_{N,t}(X)\sum_{i=1}^{N}\left(f_t(\Phi_i(X))-f_t(\phi_{\bar{\rho}_t}(x_i))\right)\dd X\\
        =&\frac{1}{N}\int_{\T^N}\rho_{N,t}(X)\sum_{i=1}^{N}f_t'(\phi_{\bar{\rho}_t}(x_i))\left(\Phi_i(X)-\phi_{\bar{\rho}_t}(x_i)\right)\dd X\\
        &+\frac{1}{N}\int_{\T^N}\rho_{N,t}(X)\sum_{i=1}^{N}\frac{1}{2}f_t''(\xi_i(X))\left(\Phi_i(X)-\phi_{\bar{\rho}_t}(x_i)\right)^2\dd X\\
        =&\I_1+\I_2,
    \end{align*}
    where $\xi_i(X)$ is some intermediate point between $\phi_{\bar{\rho}_t}(x_i)$ and $\Phi_i(X)$. Note that
    $$\Phi_i(X)-\phi_{\bar{\rho}_t}(x_i)=\frac{\Delta}{N}\sum_{j=1}^{N}h(x_i,x_j)-\Delta\int_{\T}h(x_i,y)\dd\bar{\rho}_t(y)=\frac{\Delta}{N}\sum_{j=1}^{N}\bar{h}_t(x_i,x_j),$$
    where
    $$\bar{h}_t(x,y)=h(x,y)-\int_{\T}h(x,z)\bar{\rho}_t(z)\dd z.$$
    Obviously $\bar{h}_t$ satisfies the cancellation rule
    $$\int_{\T}\bar{h}_t(x,y)\bar{\rho}_t(y)\dd y=0,\quad\forall x\in\T.$$
    We further simplify that
    $$\I_1=\frac{\Delta}{N^2}\int_{\T^N}\rho_{N,t}(X)\sum_{i,j=1}^{N}f_t'(\phi_{\bar{\rho}_t}(x_i))\bar{h}_t(x_i,x_j)\dd X=\frac{1}{N^2}\int_{\T^N}\rho_{N,t}(X)\sum_{i,j=1}^{N}K_t^1(x_i,x_j)\dd X,$$
    where
    $$K_t^1(x,y)=\Delta f_t'(\phi_{\bar{\rho}_t}(x))\bar{h}_t(x,y)=-\Delta\frac{\bar{\rho}_t'(x)\bar{h}_t(x,y)}{\bar{\rho}_t(x)\phi_{\bar{\rho}_t}'(x)}.$$
    Under the assumptions on $\bar{\rho}_t$ and $h$, we can easily obtain that $\|f_t''\|_{L^{\infty}}\leq C$. Thus by the Gibbs' inequality(Proposition~\ref{prop:gibbs}), we have
    \begin{align*}
        \I_2&\leq\frac{C\Delta^2}{N^3}\sum_{i=1}^{N}\int_{\T^N}\rho_{N,t}(X)\left(\sum_{j=1}^{N}\bar{h}_t(x_i,x_j)\right)^2\dd X\\
        &=\frac{C}{N^2}\sum_{i=1}^{N}\int_{\T^N}\rho_{N,t}(X)\left(\frac{\Delta^2}{N}\sum_{j,k=1}^{N}\bar{h}_t(x_i,x_j)\bar{h}_t(x_i,x_k)\right)\dd X\\
        &\leq\frac{C}{N^2}\sum_{i=1}^{N}\left[H(\rho_{N,t}|\bar{\rho}_t^{\otimes N})+\log{\int_{\T^N}\bar{\rho}_t^{\otimes N}(X)\exp{\left(\frac{\Delta^2}{N}\sum_{j,k=1}^{N}\bar{h}_t(x_i,x_j)\bar{h}_t(x_i,x_k)\right)}\dd X}\right]\\
        &=C\HN(t)+\frac{C}{N}\log{\int_{\T^N}}\bar{\rho}_t^{\otimes N}(X)\exp{\left(\frac{\Delta^2}{N}\sum_{j,k=1}^{N}\bar{h}_t(x_1,x_j)\bar{h}_t(x_1,x_k)\right)}\dd X.
    \end{align*}
    The last equality is owing to the fact that $\bar{\rho}_t^{\otimes N}$ is  a symmetric distribution. Note that $\bar{h}_t$ satisfies the cancellation rule \eqref{eq:cancellation2}. Hence applying Proposition~\ref{prop:exp2}, we obtain that for $\displaystyle\Delta<\frac{C_2}{2\|h\|_{L^{\infty}}}$, one has
    $$\I_2\leq C\left(\HN(t)+\frac{1}{N}\right),$$
    where $C_2$ is the constant in Proposition~\ref{prop:exp2}.\par
    For $\II$, it has the same structure as $\I$, with $f_t$ replaced by $g_t:=\log{\phi_{\bar{\rho}_t}'\circ\phi_{\bar{\rho}_t}^{-1}}$. So we can similarly obtain that
    \begin{align*}
        \II=&\frac{1}{N}\int_{\T^N}\rho_{N,t}(X)\sum_{i=1}^{N}g_t'(\phi_{\bar{\rho}_t}(x_i))\left(\Phi_i(X)-\phi_{\bar{\rho}_t}(x_i)\right)\dd X\\
        &+\frac{1}{N}\int_{\T^N}\rho_{N,t}(X)\sum_{i=1}^{N}\frac{1}{2}g_t''(\eta_i(X))\left(\Phi_i(X)-\phi_{\bar{\rho}_t}(x_i)\right)^2\dd X\\
        =&\II_1+\II_2,
    \end{align*}
    where $\eta_i(X)$ is some intermediate point between $\phi_{\bar{\rho}_t}(x_i)$ and $\Phi_i(X)$. Moreover,
    $$\II_1=\frac{1}{N^2}\int_{\T^N}\rho_{N,t}(X)\sum_{i,j=1}^{N}K_t^2(x_i,x_j)\dd X,$$
    where
    $$K_t^2(x,y)=\Delta g_t'(\phi_{\bar{\rho}_t}(x))\bar{h}_t(x,y)=\Delta\frac{\phi_{\bar{\rho}_t}''(x)\bar{h}_t(x,y)}{(\phi_{\bar{\rho}_t}'(x))^2},$$
    and
    $$\II_2\leq C\left(\HN(t)+\frac{1}{N}\right),$$
    when $\displaystyle\Delta<\frac{C_2}{2\|h\|_{L^{\infty}}}$.\par
    For $\III$, we first write down the entries of $D\Phi$:
    $$(D\Phi(X))_{ij}=\begin{cases}
    1+\dfrac{\Delta}{N}\displaystyle\sum_{k=1}^N \partial_1 h(x_i,x_k)+\dfrac{\Delta}{N}\partial_2 h(x_i,x_i),& i=j,\\
    \dfrac{\Delta}{N}\partial_2 h(x_i,x_j),& i\neq j.
    \end{cases}$$
    Then we introduce the diagonal matrix
    $$D_t(X):=\diag\left\{\phi_{\bar{\rho}_t}'(x_1),\dots,\phi_{\bar{\rho}_t}'(x_N)\right\},$$
    and the error matrix
    $$M_t(X):=D\Phi(X)-D_t(X).$$
    Note that
    $$\phi_{\bar{\rho}_t}'(x)=1+\Delta\int_{\T}\partial_1 h(x,y)\bar{\rho}_t(y)\dd y.$$
    Thus the entries of $M_t(X)$ are given by
    $$(M_t(X))_{ij}=\begin{cases}
        \dfrac{\Delta}{N}\displaystyle\sum_{k=1}^N \partial_1 \bar{h}_t(x_i,x_k)+\dfrac{\Delta}{N}\partial_2 h(x_i,x_i),& i=j,\\
        \dfrac{\Delta}{N}\partial_2 h(x_i,x_j),& i\neq j.
    \end{cases}$$
    We further define
    $$B_t(X):=(D_t(X))^{-1}M_t(X),$$
    then we have
    $$|D\Phi|(X)=\det{(M_t(X)+D_t(X))}=\left(\prod_{i=1}^{N}\phi_{\bar{\rho}_t}'(x_i)\right)\det{(I+B_t(X))}.$$
    Thus
    $$\III=-\frac{1}{N}\int_{\T^N}\rho_{N,t}(X)\log{\det{(I+B_t(X))}}\dd X.$$
    By the matrix Taylor's expansion,
    $$\log{\det{(I+B_t(X))}}=\tr{B_t(X)}+\mathcal{R}^{(2)}_t(X),$$
    where $|\mathcal{R}^{(2)}_t(X)|\leq C\|B_t(X)\|_{\HS}^2$. Here for a matrix $A=(a_{ij})_{N\times N}$, we let 
    $$\|A\|_{\HS}:=\left(\sum_{i,j=1}^{N}|a_{ij}|^2\right)^{\frac{1}{2}}$$
    denote the Hilbert-Schmidt norm of $A$. With this expansion, we can continue with
    \begin{align*}
        \III\leq&-\frac{\Delta}{N^2}\int_{\T^N}\rho_{N,t}(X)\sum_{i,j=1}^{N}\frac{\partial_1 \bar{h}_t(x_i,x_j)}{\phi_{\bar{\rho}_t}'(x_i)}\dd X-\frac{\Delta}{N^2}\int_{\T^N}\rho_{N,t}(X)\sum_{i=1}^{N}\frac{\partial_2 h(x_i,x_i)}{\phi_{\bar{\rho}_t}'(x_i)}\dd X\\
        &+\frac{C\Delta^2}{N^3}\sum_{i=1}^{N}\int_{\T^N}\rho_{N,t}(X)\left(\sum_{j=1}^N \frac{\partial_1 \bar{h}_t(x_i,x_j)}{\phi_{\bar{\rho}_t}'(x_i)}\right)^2\dd X+\frac{C\Delta^2}{N^3}\int_{\T^N}\rho_{N,t}(X)\sum_{i,j=1}^{N}\left(\frac{\partial_2 h(x_i,x_j)}{\phi_{\bar{\rho}_t}'(x_i)}\right)^2\dd X\\
        &+\frac{2C\Delta^2}{N^3}\int_{\T^N}\rho_{N,t}(X)\sum_{i,j=1}^{N}\frac{\partial_1 \bar{h}_t(x_i,x_j)\cdot\partial_2 h(x_i,x_i)}{\left(\phi_{\bar{\rho}_t}'(x_i)\right)^2}\dd X\\
        \leq&-\frac{\Delta}{N^2}\int_{\T^N}\rho_{N,t}(X)\sum_{i,j=1}^{N}\frac{\partial_1 \bar{h}_t(x_i,x_j)}{\phi_{\bar{\rho}_t}'(x_i)}\dd X+\frac{C\Delta^2}{N^3}\sum_{i=1}^{N}\int_{\T^N}\rho_{N,t}(X)\left(\sum_{j=1}^N \frac{\partial_1 \bar{h}_t(x_i,x_j)}{\phi_{\bar{\rho}_t}'(x_i)}\right)^2\dd X+\frac{C}{N}\\
        =&\III_1+\III_2+\frac{C}{N}.
    \end{align*}
    For $\III_1$, we can rewrite it as
    $$\III_1=\frac{1}{N^2}\int_{\T^N}\rho_{N,t}(X)\sum_{i,j=1}^{N}K_t^3(x_i,x_j)\dd X,$$
    where
    $$K_t^3(x,y)=-\Delta\frac{\partial_1 \bar{h}_t(x,y)}{\phi_{\bar{\rho}_t}'(x)}.$$
    For $\III_2$, it has the same structure as $\I_2$ and $\II_2$. Hence, applying Proposition~\ref{prop:exp2} again, we have the estimate
    $$\III_2\leq C\left(\HN(t)+\frac{1}{N}\right),$$
    when $\displaystyle\Delta<\frac{\min\{1,C_2\}}{4\|\partial_1 h\|_{L^{\infty}}}$.\par
    Combining all the estimates above, we now obtain
    $$R_t\leq \I_1+\II_1+\III_1+C\left(\HN(t)+\frac{1}{N}\right).$$
    Note that
    $$\I_1+\II_1+\III_1=\frac{1}{N^2}\int_{\T^N}\rho_{N,t}(X)\sum_{i,j=1}^{N}K_t(x_i,x_j)\dd X,$$
    where
    $$K_t(x,y)=\sum_{i=1}^{3}K_t^i(x,y)=-\Delta\left[\frac{\bar{\rho}_t'(x)\bar{h}_t(x,y)}{\bar{\rho}_t(x)\phi_{\bar{\rho}_t}'(x)}-\frac{\phi_{\bar{\rho}_t}''(x)\bar{h}_t(x,y)}{(\phi_{\bar{\rho}_t}'(x))^2}+\frac{\partial_1 \bar{h}_t(x,y)}{\phi_{\bar{\rho}_t}'(x)}\right].$$
    One can check that $K_t$ satisfies the two cancellation rules \eqref{eq:cancellation1}. In fact,
    $$\int_{\T}K_t(x,y)\bar{\rho}_t(y)\dd y=0,\quad\forall x\in\T,$$
    since $\bar{h}_t$ satisfies the cancellation rule. And
    \begin{align*}
        \int_{\T}K_t(x,y)\bar{\rho}_t(x)\dd x&=-\Delta\int_{\T}\left[\frac{\bar{\rho}_t'(x)\bar{h}_t(x,y)}{\phi_{\bar{\rho}_t}'(x)}-\frac{\bar{\rho}_t(x)\phi_{\bar{\rho}_t}''(x)\bar{h}_t(x,y)}{(\phi_{\bar{\rho}_t}'(x))^2}+\frac{\bar{\rho}_t(x)\partial_1 \bar{h}_t(x,y)}{\phi_{\bar{\rho}_t}'(x)}\right]\dd x\\
        &=-\Delta\int_{\T}\frac{\partial}{\partial x}\left(\frac{\bar{\rho}_t(x)\bar{h}_t(x,y)}{\phi_{\bar{\rho}_t}'(x)}\right)\dd x=0,\quad\forall y\in\T.
    \end{align*}
    Therefore, by the Gibbs' inequality(Proposition~\ref{prop:gibbs}) and Proposition~\ref{prop:exp1}, we have
    \begin{align*}
        \I_1+\II_1+\III_1&\leq\frac{1}{N}\left[H(\rho_{N,t}|\bar{\rho}_t^{\otimes N})+\log{\int_{\T^N}\bar{\rho}_t^{\otimes N}(X)\exp{\left(\frac{1}{N}\sum_{i,j=1}^{N}K_t(x_i,x_j)\right)}\dd X}\right]\\
        &=\HN(t)+\frac{1}{N}\log{\int_{\T^N}\bar{\rho}_t^{\otimes N}(X)\exp{\left(\frac{1}{N}\sum_{i,j=1}^{N}K_t(x_i,x_j)\right)}\dd X}\\
        &\leq\HN(t)+\frac{C}{N},
    \end{align*}
    for $\Delta<\Delta_0(\|h\|_{C^2},\kappa,K)$. With all these estimates, we can now conclude that
    $$R_t\leq C\left(\HN(t)+\frac{1}{N}\right).$$
\end{proof}
\begin{remark}
    We remark here that we can estimate the second order terms $\I_2,\II_2,\III_2$ respectively by Proposition~\ref{prop:exp2}, but we cannot do this for first order terms $\I_1,\II_1,\III_1$. This is because $K_t^i,i=1,2,3$ do not satisfy the cancellation rule with respect to the first variable respectively. It holds only when we are concerned with the sum $K_t$.
\end{remark}
Now the final estimate \eqref{eq:entropy-estimate} follows immediately by induction.
\begin{proof}[Proof of Theorem~\ref{thm:main}]
Combining Proposition~\ref{prop:one-step} and Proposition~\ref{prop:estimate-Rt}, we get
$$\HN(t+1)\leq C\left(\HN(t)+\frac{1}{N}\right),\quad t=0,1,\dots,T-1,$$
for $\Delta<\Delta_0$. Thus
$$\HN(t+1)+\frac{1}{N}\leq C\left(\HN(t)+\frac{1}{N}\right),\quad t=0,1,\dots,T-1.$$
Iterating the inequality above for $t$ times, we get to the conclusion
$$\HN(t)<\HN(t)+\frac{1}{N}\leq\mathrm{e}^{Ct}\left(\HN(0)+\frac{1}{N}\right),\quad t=0,1,\dots,T,$$
with the constant $C$ modified.
\end{proof}
\subsection{Propagation of chaos}\label{sub:propagation-of-chaos}
With the relative entropy estimate \eqref{eq:entropy-estimate}, the Kac's chaos (Corollary~\ref{cor:Kacs-chaos}) follows by a standard argument. For the sake of completeness, we give a short proof here.
\begin{proof}[Proof of Corollary~\ref{cor:Kacs-chaos}]
    Under the assumption \textbf{(A3)}, we have $\rho_{N,0}=\bar{\rho}_0^{\otimes N}$, and thus $\HN(0)=\frac{1}{N}H(\rho_{N,0}|\bar{\rho}_0^{\otimes N})=0$. By Theorem~\ref{thm:main}, we have $\HN(t)\leq\frac{\mathrm{e}^{Ct}}{N}$ and thus $H(\rho_{N,t}|\bar{\rho}_t^{\otimes N})=N\HN(t)$ is uniformly bounded. Let $\rho_{N,t;k}$ be the $k$-marginal of $\rho_{N,t}$. Then by the super-additivity of relative entropy\cite{HM14}, we have
    $$H\left(\rho_{N,t;k}|\bar{\rho}_t^{\otimes k}\right)\leq\frac{k}{N}H\left(\rho_{N,t}|\bar{\rho}_t^{\otimes N}\right)\to 0,\quad (N\to\infty).$$
    Then by the classical Csisz\'{a}r-Kullback-Pinsker inequality\cite[(22.25)]{Vil08}, for fixed $k\in\mathbb{N}$,
    $$W_1\left(\rho_{N,t;k},\bar{\rho}_t^{\otimes k}\right)\leq C\|\rho_{N,t;k}-\bar{\rho}_t^{\otimes k}\|_{TV}\leq C\sqrt{2H\left(\rho_{N,t;k}|\bar{\rho}_t^{\otimes k}\right)}\to 0,\quad (N\to\infty),$$
    where $\|\cdot\|_{TV}$ denotes the total variation and the first inequality is ensured by the fact that $\T^k$ is compact. Then the Kac's chaos follows by an application of Proposition~\ref{prop:topology}.
\end{proof}
\section{Gaussian Fluctuations}\label{sec:Gaussian-fluctuations}
We are concerned with the central limit theory of our system in this section, i.e. the asymptotic behavior of the fluctuation processes $\eta_t^N=\sqrt{N}(\mu_N(t)-\bar{\rho}_t)$. We first formally derive the equation \eqref{eq:etat} satisfied by the limiting process $\eta_t$. Then we make a short discussion on the time evolution operator $Q_{0,t}$, with which we can show the Gaussianity of $\eta_t$. Finally we use the relative entropy method to prove Theorem~\ref{thm:CLT}, i.e. the convergence of $\eta^N$ to $\eta$ in distribution. More precisely, we will use a lemma from \cite{WZZ23} to bound the negative Sobolev norms of $\eta_t^N$ by the relative entropy, which implies the tightness of laws of $\eta^N$. Then actually the only thing we need to show is that any tight limit of $\eta^N$ is indeed a solution to \eqref{eq:etat}.
\subsection{Formal derivation of the equation for $\eta_t$}\label{sub:formal-derivation}
We first derive the equation \eqref{eq:etat} on a heuristic level. In Section~\ref{sub:convergence} we will rigorously justify the heuristic assumptions we make in the process of formal derivation.\par
According to \cite[Theorem 1]{FG15}, if the initial data $\{x_i(0)\}_{i=1}^{\infty}$ are i.i.d. with common distribution $\bar{\rho}_0$, then $\mathbb{E}W_1(\mu_N(0),\bar{\rho}_0)=O(N^{-1/2})$. Thus we formally assume that $\displaystyle W_1(\mu_N(0),\bar{\rho}_0)=O(N^{-1/2})$ almost surely. Then by Theorem \ref{thm:Dobrushin}, we have $\displaystyle W_1(\mu_N(t),\bar{\rho}_t)=O(N^{-1/2})$ for any $t\geq 0$. We consider
$$\eta_{t+1}^N=\sqrt{N}(\mu_N(t+1)-\bar{\rho}_{t+1})=\sqrt{N}\left(\left(F_{\mu_N(t)}\right)_{\ast}\mu_N(t)-\left(F_{\bar{\rho}_t}\right)_{\ast}\bar{\rho}_t\right).$$
Then for any test function $\varphi\in C^{\infty}(\T)$, we have
\begin{align*}
    \langle\varphi,\eta_{t+1}^N\rangle=&\sqrt{N}\langle\varphi,\left(F_{\mu_N(t)}\right)_{\ast}\mu_N(t)-\left(F_{\bar{\rho}_t}\right)_{\ast}\bar{\rho}_t\rangle\\
    =&\sqrt{N}\langle\varphi,\left(F_{\mu_N(t)}\right)_{\ast}\mu_N(t)-\left(F_{\mu_N(t)}\right)_{\ast}\bar{\rho}_t\rangle+\sqrt{N}\langle\varphi,\left(F_{\mu_N(t)}\right)_{\ast}\bar{\rho}_t-\left(F_{\bar{\rho}_t}\right)_{\ast}\bar{\rho}_t\rangle\\
    =&\langle\varphi,\left(F_{\mu_N(t)}\right)_{\ast}\eta_t^N\rangle+\sqrt{N}\langle\varphi\circ F\circ\phi_{\mu_N(t)}-\varphi\circ F\circ\phi_{\bar{\rho}_t},\bar{\rho}_t\rangle.
\end{align*}
Note that
$$\phi_{\mu_N(t)}(x)-\phi_{\bar{\rho}_t}(x)=\Delta\int_{\T}h(x,y)\dd(\mu_N(t)-\bar{\rho}_t)(y)=\Delta(h\ast(\mu_N(t)-\bar{\rho}_t))(x),$$
where for a measure $\mu$ on $\T$, we denote
$$(h\ast\mu)(x):=\int_{\T}h(x,y)\dd\mu(y).$$
Thus
$$|\phi_{\mu_N(t)}-\phi_{\bar{\rho}_t}|\leq\Delta\Vert\partial_2h\Vert_{L^{\infty}}W_1(\mu_N(t),\bar{\rho}_t)=O\left(\frac{1}{\sqrt{N}}\right).$$
And by Taylor's expansion, we can continue with
\begin{equation}\label{eq:weak-etatN}
\begin{aligned}
    \langle\varphi,\eta_{t+1}^N\rangle=&\langle\varphi,\left(F_{\mu_N(t)}\right)_{\ast}\eta_t^N\rangle+\Delta\langle(\varphi\circ F)'(\phi_{\bar{\rho}_t})\cdot h\ast\eta_t^N,\bar{\rho}_t\rangle\\
    &\hspace{0.5cm}+\frac{\Delta^2\sqrt{N}}{2}\langle(\varphi\circ F)''(\xi)\cdot(\phi_{\mu_N(t)}-\phi_{\bar{\rho}_t})^2,\bar{\rho}_t\rangle\\
    =&\langle\varphi,\left(F_{\mu_N(t)}\right)_{\ast}\eta_t^N\rangle+\Delta\langle(\varphi\circ F)'(\phi_{\bar{\rho}_t})\cdot h\ast\eta_t^N,\bar{\rho}_t\rangle+O\left(\frac{1}{\sqrt{N}}\right).
\end{aligned}
\end{equation}    
Letting $N\to\infty$, we formally obtain
\begin{equation}\label{eq:weak-etat}
    \langle\varphi,\eta_{t+1}\rangle=\langle\varphi,\left(F_{\bar{\rho}_t}\right)_{\ast}\eta_t\rangle+\Delta\langle(\varphi\circ F)'(\phi_{\bar{\rho}_t})\cdot h\ast\eta_t,\bar{\rho}_t\rangle,
\end{equation}
which is the weak form of the equation \eqref{eq:etat}.
\subsection{Time evolution operator $Q_{0,t}$ and Gaussianity of $\eta$}\label{sub:time-evolution}
Recall that the assumption \textbf{(A3)} is that the initial data $\{x_i(0)\}_{i=1}^{\infty}$ are i.i.d. with common distribution $\bar{\rho}_0$. Under this assumption, it is well-known that there exists a centered Gaussian field $\eta_0$, which is a random element on the space of tempered distributions $\mathcal{S}'(\T)$, such that the sequence $\{\eta_0^N\}_{N=1}^{\infty}$ converges in distribution to $\eta_0$ in $\mathcal{S}'(\T)$.\par
We now briefly discuss how to characterize this $\eta_0$. Indeed, the characterization \eqref{eq:Gaussian} using a single test function cannot completely characterize the properties of $\eta_0$. What we need is a multi-dimensional version of \eqref{eq:Gaussian}. In fact, according to the multi-dimensional version of the Central Limit Theorem, for any $m$ test functions $\varphi_1,\dots,\varphi_m\in C^{\infty}(\T)$, we have
$$\begin{aligned}
    \left(\left\langle\varphi_1,\eta_0^N\right\rangle,\dots,\left\langle\varphi_m,\eta_0^N\right\rangle\right)&=\frac{1}{\sqrt{N}}\sum_{i=1}^{N}\left(\varphi_1(x_i(0))-\langle\varphi_1,\bar{\rho}_0\rangle,\dots,\varphi_m(x_i(0))-\langle\varphi_m,\bar{\rho}_0\rangle\right)\\
    &\to\mathcal{N}\left(0,\Sigma\right)
\end{aligned}$$
in distribution as $N\to\infty$, where $\Sigma$ is an $m\times m$ matrix with coefficients
$$\Sigma_{ij}=\langle\varphi_i\varphi_j,\bar{\rho}_0\rangle-\langle\varphi_i,\bar{\rho}_0\rangle\langle\varphi_j,\bar{\rho}_0\rangle,\quad 1\leq i,j\leq m,$$
and $\mathcal{N}(0,\Sigma)$ denotes the $m$-dimensional centered Gaussian distribution with covariance matrix $\Sigma$. Thus a complete characterization of $\eta_0$ is given by
\begin{equation}\label{eq:multi-Gaussian}
    \left(\left\langle\varphi_1,\eta_0\right\rangle,\dots,\left\langle\varphi_m,\eta_0\right\rangle\right)\sim\mathcal{N}\left(0,\Sigma\right),\quad m\geq 1,\ \varphi_1,\dots,\varphi_m\in C^{\infty}(\T),
\end{equation}
where the covariance matrix $\Sigma$ is as described above.\par
Starting from $\eta_0$ and evolving according to \eqref{eq:etat}, we will get a solution $\eta=(\eta_t)_{0\leq t\leq T}$ of \eqref{eq:etat}. One can check that $\eta$ is still a centered Gaussian field. To see this, we need to introduce the time evolution operator $Q_{0,t}$, which is indeed the adjoint of the operator $\eta_0\mapsto\eta_t$. Actually, given any $\varphi\in C^{\infty}(\T)$, we deduce from \eqref{eq:etat} that
\begin{align*}
    \langle\varphi,\eta_{t+1}\rangle&=\langle\varphi,\left(F_{\bar{\rho}_t}\right)_{\ast}\eta_t\rangle+\Delta\langle(\varphi\circ F)'(\phi_{\bar{\rho}_t})\cdot h\ast\eta_t,\bar{\rho}_t\rangle\\
    &=\langle\varphi\circ F_{\bar{\rho}_t},\eta_t\rangle+\Delta\int_{\T}(\varphi\circ F)'(\phi_{\bar{\rho}_t}(x))\bar{\rho}_t(x)\left(\int_{\T}h(x,y)\dd\eta_t(y)\right)\dd x\\
    &=\int_{\T}\left((\varphi\circ F_{\bar{\rho}_t})(y)+\Delta\int_{\T}(\varphi\circ F)'(\phi_{\bar{\rho}_t}(x))\bar{\rho}_t(x)h(x,y)\dd x\right)\dd\eta_t(y).
\end{align*}
Thus if we define
$$(Q_{t,t+1}\varphi)(y):=(\varphi\circ F_{\bar{\rho}_t})(y)+\Delta\int_{\T}(\varphi\circ F)'(\phi_{\bar{\rho}_t}(x))\bar{\rho}_t(x)h(x,y)\dd x,\quad y\in\T,\ t\geq 0,$$
and
\begin{equation}\label{eq:def-Qst}
    Q_{s,t}:=Q_{s,s+1}\circ\cdots\circ Q_{t-1,t},\quad 0\leq s\leq t,
\end{equation}
then
$$\langle\varphi,\eta_{t+1}\rangle=\langle Q_{t,t+1}\varphi,\eta_t\rangle,$$
and hence
$$\langle\varphi,\eta_t\rangle=\langle Q_{0,t}\varphi,\eta_0\rangle\sim\mathcal{N}\left(0,\langle|Q_{0,t}\varphi|^2,\bar{\rho}_0\rangle-\langle Q_{0,t}\varphi,\bar{\rho}_0\rangle^2\right).$$
To see the Gaussianity of $\eta$, we take $T+1$ test functions $\phi_0,\dots,\phi_T\in C^{\infty}(\T)$. Then by the discussion above, we have
$$\left(\langle\varphi_0,\eta_0\rangle,\dots,\langle\varphi_T,\eta_T\rangle\right)=\left(\langle\varphi_0,\eta_0\rangle,\langle Q_{0,1}\varphi_1,\eta_0\rangle,\dots,\langle Q_{0,T}\varphi_T,\eta_0\rangle\right)\sim\mathcal{N}\left(0,\Gamma\right),$$
where $\Gamma$ is a $(T+1)\times(T+1)$ matrix with coefficients
$$\Gamma_{st}=\langle(Q_{0,s}\varphi_s)(Q_{0,t}\varphi_t),\bar{\rho}_0\rangle-\langle Q_{0,s}\varphi_s,\bar{\rho}_0\rangle\langle Q_{0,t}\varphi_t,\bar{\rho}_0\rangle,\quad 0\leq s,t\leq T.$$
\subsection{Convergence of the fluctuation processes}\label{sub:convergence}
Now we are going to prove the main part of Theorem~\ref{thm:CLT}, which is the convergence of $\eta^N$ to $\eta$ in distribution in $(H^{-\alpha}(\T))^{T+1},\alpha>1/2$.\par
Our crucial lemma is from \cite{WZZ23}, which bounds the $H^{-\alpha}$ norm of $\eta_t^N$ uniformly by the relative entropy.
\begin{lemma}[Wang, Zhao and Zhu {\cite[Lemma 2.6]{WZZ23}}]\label{lem:bound}
    For each $\alpha>1/2$, there exists a constant $C_{\alpha}>0$ such that for $0\leq t\leq T$,
    \begin{equation}\label{eq:bound-negative-Sobolev}
        \mathbb{E}\Vert\mu_N(t)-\bar{\rho}_t\Vert_{H^{-\alpha}}^2\leq\frac{C_{\alpha}}{N}(H(\rho_{N,t}|\bar{\rho}_t^{\otimes N})+1).
    \end{equation}
\end{lemma}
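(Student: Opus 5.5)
The plan is to expand the $H^{-\alpha}$ norm in the Fourier basis and then reduce the estimate to a bound on $\mathbb{E}|\langle e_k,\mu_N(t)-\bar{\rho}_t\rangle|^2$ that is uniform in $k$. By \eqref{eq:def-Halpha},
$$\mathbb{E}\|\mu_N(t)-\bar{\rho}_t\|_{H^{-\alpha}}^2=\sum_{k\in\mathbb{Z}}\langle k\rangle^{-2\alpha}\,\mathbb{E}\left|\frac{1}{N}\sum_{i=1}^N\psi_k(x_i(t))\right|^2,\qquad \psi_k:=e_k-\langle e_k,\bar{\rho}_t\rangle .$$
The weights $\langle k\rangle^{-2\alpha}$ are summable exactly when $\alpha>1/2$, so it is enough to prove
$$\mathbb{E}\left|\frac{1}{N}\sum_i\psi_k(x_i)\right|^2\leq\frac{C}{N}\big(H(\rho_{N,t}|\bar{\rho}_t^{\otimes N})+1\big)$$
with $C$ independent of $k$. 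Each $\psi_k$ is bounded by $2$ and has $\bar{\rho}_t$-mean zero. We treat real and imaginary parts separately, so that we only need to consider real bounded functions $\psi$ with $\|\psi\|_\infty\le 2$ and $\int\psi\,\dd\bar{\rho}_t=0$.

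For a fixed such $\psi$, write
$$\left|\frac1N\sum_i\psi(x_i)\right|^2=\frac{1}{N}\cdot\frac{1}{N}\sum_{i,j}\psi(x_i)\psi(x_j).$$
We then apply the Gibbs inequality (Proposition~\ref{prop:gibbs}) with $\mu=\rho_{N,t}$, $\nu=\bar{\rho}_t^{\otimes N}$ and a small fixed $\lambda>0$:
$$\mathbb{E}\,\frac{1}{N}\sum_{i,j}\psi(x_i)\psi(x_j)\leq\frac{1}{\lambda}H(\rho_{N,t}|\bar{\rho}_t^{\otimes N})+\frac1\lambda\log\int\bar{\rho}_t^{\otimes N}\exp\Big(\frac{\lambda}{N}\sum_{i,j}\psi(x_i)\psi(x_j)\Big)\dd X.$$
The kernel $\phi(x,y)=\lambda\psi(x)\psi(y)$ satisfies both cancellation rules \eqref{eq:cancellation1}, because $\psi$ has $\bar{\rho}_t$-mean zero. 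Choosing $\lambda\le C_1/4$ gives $\|\phi\|_\infty\le C_1$, so Proposition~\ref{prop:exp1} bounds the exponential integral by a constant uniform in $N$. Dividing by $N$ then yields the desired bound with a constant depending only on $C_1$.

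The main obstacle is checking that the constant in Proposition~\ref{prop:exp1} depends only on the bound $\|\phi\|_\infty$ and not on the specific $\phi$; this is what makes the estimate uniform in $k$. If the statement as quoted is not explicit enough on this point, the rank-one structure gives a direct route. For i.i.d. $x_i\sim\bar{\rho}_t$, the variable $S=N^{-1/2}\sum_i\psi(x_i)$ is sub-Gaussian with variance proxy $\|\psi\|_\infty^2$ by Hoeffding's inequality. Hence $\mathbb{E}e^{\lambda S^2}\le(1-2\lambda\cdot 4)^{-1/2}$ for $\lambda<1/8$, which is explicitly uniform in $\psi$ and $N$. Summing over $k$ against $\langle k\rangle^{-2\alpha}$ then gives the estimate \eqref{eq:bound-negative-Sobolev} with $C_\alpha=C\sum_k\langle k\rangle^{-2\alpha}$.
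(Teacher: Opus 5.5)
The paper does not prove this lemma; it is imported verbatim from \cite{WZZ23}, so there is no in-paper argument to compare against. Your proof is correct and complete as a self-contained justification.

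On the uniformity issue you flag: it can be avoided entirely with the paper's own toolkit by not splitting into modes. By Tonelli,
\[
\|\mu_N(t)-\bar\rho_t\|_{H^{-\alpha}}^2=\frac{1}{N^2}\sum_{i,j=1}^N G_t(x_i,x_j),\qquad G_t(x,y):=\sum_{k\in\mathbb{Z}}\langle k\rangle^{-2\alpha}\psi_k(x)\overline{\psi_k(y)}.
\]
Here $G_t$ is real (pair $k$ with $-k$), is bounded by $4\sum_k\langle k\rangle^{-2\alpha}$ (finite exactly when $\alpha>1/2$), and satisfies both cancellation rules \eqref{eq:cancellation1}. One application of Proposition~\ref{prop:gibbs} with $\lambda=C_1/(4\sum_k\langle k\rangle^{-2\alpha})$, followed by Proposition~\ref{prop:exp1}, gives the bound with $C_\alpha$ proportional to $\sum_k\langle k\rangle^{-2\alpha}$. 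No $k$-uniformity question arises, because there is a single kernel for each of the finitely many times. Your main line is also fine as written, since in \cite{JW18} the bound of Theorem 4 is an explicit function of $\|\phi\|_{L^\infty}$ alone.

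Your Hoeffding fallback buys something the kernel route does not: it replaces the combinatorial Jabin--Wang estimate by a two-line argument. First, Hoeffding's lemma gives $\mathbb{E}\,e^{sS}\le e^{2s^2}$ under $\bar\rho_t^{\otimes N}$. Second, the Gaussian linearization $e^{\lambda S^2}=\mathbb{E}_G\,e^{\sqrt{2\lambda}\,GS}$, with $G\sim\mathcal{N}(0,1)$ independent, yields $\mathbb{E}\,e^{\lambda S^2}\le(1-8\lambda)^{-1/2}$. Write this second step out, since ``by Hoeffding's inequality'' alone does not give the exponential-square moment. This route makes the constants explicit. It also shows that the lemma uses nothing about the dynamics, \textbf{(A2)}, or the symmetry of $\rho_{N,t}$: it holds for any law on $\T^N$ and any reference probability measure on $\T$.
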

In particular, Lemma \ref{lem:bound} implies the tightness of laws of $\{\eta_0^N\}$ in $H^{-\alpha}$, which together with the Assumption \textbf{(A3)} yields the convergence of $\{\eta_0^N\}$ in negative Sobolev spaces.
\begin{corollary}\label{cor:tightconvergence}
    For every $\alpha>1/2$, $\eta_0^N$ converges in distribution to $\eta_0$ in $H^{-\alpha}(\T)$.
\end{corollary}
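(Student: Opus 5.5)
The plan is to combine tightness in $H^{-\alpha}(\T)$, which follows from Lemma~\ref{lem:bound}, with the identification of finite-dimensional limits that assumption \textbf{(A3)} supplies. Under \textbf{(A3)} we have $\rho_{N,0}=\bar{\rho}_0^{\otimes N}$, so $H(\rho_{N,0}|\bar{\rho}_0^{\otimes N})=0$. Lemma~\ref{lem:bound} at $t=0$ then gives
$$\mathbb{E}\|\eta_0^N\|_{H^{-\alpha}}^2=N\,\mathbb{E}\|\mu_N(0)-\bar{\rho}_0\|_{H^{-\alpha}}^2\leq C_\alpha$$
uniformly in $N$, for every $\alpha>1/2$.

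\textbf{Tightness.} Fix $\alpha>1/2$ and choose $\alpha'\in(1/2,\alpha)$. The embedding $H^{-\alpha'}(\T)\hookrightarrow H^{-\alpha}(\T)$ is compact, because on the Fourier side it is multiplication by $\langle k\rangle^{\alpha'-\alpha}\to0$. Hence closed balls of $H^{-\alpha'}$ are compact in $H^{-\alpha}$. The uniform bound $\mathbb{E}\|\eta_0^N\|_{H^{-\alpha'}}^2\leq C_{\alpha'}$ and Chebyshev's inequality give $\mathbb{P}(\|\eta_0^N\|_{H^{-\alpha'}}>R)\leq C_{\alpha'}/R^2$. Therefore the laws of $\eta_0^N$ are tight in $H^{-\alpha}$, and by Prokhorov's theorem every subsequence has a further subsequence converging in distribution in $H^{-\alpha}$ to some limit $\tilde{\eta}$.

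\textbf{Identification of the limit.} Let $\tilde{\eta}$ be such a subsequential limit. For $\varphi\in C^\infty(\T)\subset H^{\alpha}$, the map $f\mapsto\langle\varphi,f\rangle$ is continuous on $H^{-\alpha}$. By the continuous mapping theorem, $(\langle\varphi_1,\eta_0^N\rangle,\dots,\langle\varphi_m,\eta_0^N\rangle)$ converges in distribution along the subsequence to $(\langle\varphi_1,\tilde\eta\rangle,\dots,\langle\varphi_m,\tilde\eta\rangle)$. By the multidimensional CLT discussed in Section~\ref{sub:time-evolution}, the same vector converges to $\mathcal{N}(0,\Sigma)$, which is the law of $(\langle\varphi_i,\eta_0\rangle)_i$ by \eqref{eq:multi-Gaussian}. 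So $\tilde\eta$ and $\eta_0$ have the same finite-dimensional distributions over smooth test functions. These functionals separate points of $H^{-\alpha}$: if $\langle e_k,f\rangle=0$ for all $k$, then $f=0$. The cylinder $\sigma$-algebra they generate is the Borel $\sigma$-algebra of the separable Hilbert space $H^{-\alpha}$, so the finite-dimensional distributions determine the law, and $\mathrm{Law}(\tilde\eta)=\mathrm{Law}(\eta_0)$. In particular $\eta_0$ has a version taking values in $H^{-\alpha}$, which is consistent with \textbf{(A3)} since $H^{-\alpha}\hookrightarrow\mathcal{S}'(\T)$ continuously. Because every subsequence has a further subsequence with the same limit, the whole sequence converges in distribution to $\eta_0$ in $H^{-\alpha}$.

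\textbf{Main obstacle.} The main subtlety is this identification step: $\eta_0$ is given only as an $\mathcal{S}'$-valued field, so one has to justify that its law on $\mathcal{S}'$ coincides with the pushforward of the $H^{-\alpha}$ limit. This uses the fact that the embedding is continuous and injective, and that both laws are determined by the joint laws of finitely many pairings with smooth functions.
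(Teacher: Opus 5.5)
Your proposal is correct and follows the same route as the paper. You get tightness in $H^{-\alpha}$ from Lemma~\ref{lem:bound} at $t=0$ (using zero initial entropy), the compact embedding $H^{-\beta}\hookrightarrow H^{-\alpha}$ for $\beta\in(1/2,\alpha)$, and Chebyshev's inequality, then apply Prokhorov and identify every subsequential limit with $\eta_0$. The only difference is that you spell out the identification step (finite-dimensional laws via the CLT, and the fact that cylinder sets generate the Borel $\sigma$-algebra of $H^{-\alpha}$), which the paper compresses into a one-line appeal to the $\mathcal{S}'$ convergence in \textbf{(A3)}.
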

\begin{proof}
    We first prove the tightness of laws of $\{\eta_0^N\}$ in $H^{-\alpha}$. Indeed, fix a $\beta\in(1/2,\alpha)$, the Sobolev Embedding Theorem says that $H^{-\beta}(\T)$ is compactly embedded into $H^{-\alpha}(\T)$. Hence for any $R>0$, the ball
    $$B_R^\beta:=\{\nu|\ \Vert\nu\Vert_{H^{-\beta}}\leq R\}$$
    is a compact set in $H^{-\alpha}(\T)$. Besides, due to the Assumption \textbf{(A3)}, we have $\rho_{N,0}=\bar{\rho}_0^{\otimes N}$ and thus $H(\rho_{N,0}|\bar{\rho}_0^{\otimes N})=0$. Therefore, we obtain by Markov's inequality and Lemma \ref{lem:bound} that
    $$\mathcal{L}_{\eta_0^N}((B_R^\beta)^c)=\mathbb{P}(\Vert\eta_0^N\Vert_{H^{-\beta}}>R)\leq\frac{1}{R^2}\mathbb{E}\Vert\eta_0^N\Vert_{H^{-\beta}}^2=\frac{N}{R^2}\mathbb{E}\Vert\mu_N(0)-\bar{\rho}_0\Vert_{H^{-\beta}}^2\leq\frac{C_{\beta}}{R^2},$$
    which implies the tightness of laws of $\{\eta_0^N\}$ in $H^{-\alpha}$. Here for a random element $X$, we use $\mathcal{L}_X$ to denote the distribution of $X$.\par
    By the Prohorov's Theorem, for every subsequence of $\{\eta_0^N\}$, there exists a further subsequence converging in distribution to some random element in $H^{-\alpha}(\T)$. But we have known that $\{\eta_0^N\}$ converges in distribution to $\eta_0$ in $\mathcal{S}'(\T)$, so we must have $\eta_0^N$ converges in distribution to $\eta_0$ in $H^{-\alpha}(\T)$.
\end{proof}
By the same arguments as in the proof of Corollary \ref{cor:tightconvergence}, we can show the tightness of laws of $\{\eta^N\}$ in $(H^{-\alpha}(\T))^{T+1}$. Since the uniqueness of the solution to \eqref{eq:etat} is obvious, we only need to show that every tight limit of $\{\eta^N\}$ in $(H^{-\alpha}(\T))^{T+1}$ is a solution to \eqref{eq:etat}. And by the argument below, we only need to prove this statement for a fixed $\alpha_0>1/2$. Our choice is $\alpha_0=3$.
\begin{theorem}\label{thm:solution}
    If a subsequence $\{\eta^{N_k}\}$ of $\{\eta^N\}$ converges in distribution to $\tilde{\eta}$ in $(H^{-3}(\T))^{T+1}$, then $\tilde{\eta}$ is a solution to \eqref{eq:etat}.
\end{theorem}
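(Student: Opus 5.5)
We will show that for every test function $\varphi$ (say $\varphi\in C^\infty(\T)$, or merely $C^3$) and every $0\le t\le T-1$, the limit satisfies the weak form \eqref{eq:weak-etat}, namely
$$\langle\varphi,\tilde\eta_{t+1}\rangle=\langle Q_{t,t+1}\varphi,\tilde\eta_t\rangle \quad\text{almost surely}.$$
The starting point is the exact identity derived in Section~\ref{sub:formal-derivation}:
$$\langle\varphi,\eta^N_{t+1}\rangle-\langle Q_{t,t+1}\varphi,\eta^N_t\rangle=\mathcal{E}^N_t(\varphi),$$
where the remainder has two pieces. The first is $\langle\varphi\circ F\circ\phi_{\mu_N(t)}-\varphi\circ F\circ\phi_{\bar\rho_t},\eta_t^N\rangle$, coming from replacing $F_{\mu_N(t)}$ by $F_{\bar\rho_t}$. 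The second is the Taylor remainder $\frac{\sqrt N}{2}\langle(\varphi\circ F)''(\xi)(\phi_{\mu_N(t)}-\phi_{\bar\rho_t})^2,\bar\rho_t\rangle$. The plan is first to prove $\mathbb{E}|\mathcal{E}^N_t(\varphi)|\to0$, and then to pass to the limit along the subsequence using continuity of the linear map $\eta\mapsto\langle\varphi,\eta_{t+1}\rangle-\langle Q_{t,t+1}\varphi,\eta_t\rangle$ on $(H^{-3})^{T+1}$.

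\textbf{Continuity step.} Under \textbf{(A1.3)} and \textbf{(A2)}, $F_{\bar\rho_t}=F\circ\phi_{\bar\rho_t}$ is $C^3$. The integral term in $Q_{t,t+1}\varphi$ is $C^3$ in $y$ because $h\in C^3$. Hence $Q_{t,t+1}\varphi\in C^3\subset H^3$, and $\eta\mapsto\langle Q_{t,t+1}\varphi,\eta_t\rangle$ is a continuous linear functional on $H^{-3}$; this is why $\alpha_0=3$ is a convenient choice. By the continuous mapping theorem, $\langle\varphi,\eta^{N_k}_{t+1}\rangle-\langle Q_{t,t+1}\varphi,\eta^{N_k}_t\rangle$ converges in distribution to $\langle\varphi,\tilde\eta_{t+1}\rangle-\langle Q_{t,t+1}\varphi,\tilde\eta_t\rangle$. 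Since the left side also equals $\mathcal{E}^{N_k}_t(\varphi)\to0$ in $L^1$, the limit is $0$ almost surely. Taking a countable dense family of $\varphi$ and using density then gives \eqref{eq:etat} for $\tilde\eta$.

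\textbf{Bounding the remainder.} The key quantity is $\delta_N(x):=(h\ast(\mu_N(t)-\bar\rho_t))(x)=\langle h(x,\cdot),\mu_N(t)-\bar\rho_t\rangle$. By duality \eqref{eq:dual},
$$\|\delta_N\|_{C^k}\le C\|h\|_{C^{k+1}}\|\mu_N(t)-\bar\rho_t\|_{H^{-1}}.$$
Lemma~\ref{lem:bound} with $\alpha=1$, combined with Theorem~\ref{thm:main} under \textbf{(A3)} (which gives $H(\rho_{N,t}|\bar\rho_t^{\otimes N})\le e^{Ct}$), yields $\mathbb{E}\|\mu_N(t)-\bar\rho_t\|_{H^{-1}}^2\le C/N$.

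For the Taylor piece this gives
$$\mathbb{E}\left[\sqrt N\,\|\delta_N\|_\infty^2\right]\le C/\sqrt N\to0.$$

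The first piece has the form $\langle g_N,\eta^N_t\rangle$ with $g_N=\varphi\circ F\circ(\phi_{\bar\rho_t}+\Delta\delta_N)-\varphi\circ F\circ\phi_{\bar\rho_t}$. We bound it by $\|g_N\|_{H^1}\|\eta_t^N\|_{H^{-1}}$. Expanding and using the chain rule, $\|g_N\|_{C^1}\le C\|\delta_N\|_{C^1}$, with $C$ depending on $\|\varphi\|_{C^2}$ and $\|F\|_{C^2}$. Then Cauchy--Schwarz gives
$$\mathbb{E}|\langle g_N,\eta^N_t\rangle|\le C\left(\mathbb{E}\|\delta_N\|_{C^1}^2\right)^{1/2}\left(\mathbb{E}\|\eta_t^N\|_{H^{-1}}^2\right)^{1/2}\le C N^{-1/2}\cdot C\to0.$$

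\textbf{Main obstacle.} The delicate point is making the formal $O(N^{-1/2})$ bounds of Section~\ref{sub:formal-derivation} rigorous. That section used almost-sure $W_1$ bounds, which are not available; the fix is to replace them by the $L^2$ entropy-based bound from Lemma~\ref{lem:bound}. We must also check that the constants do not hide a dependence on $N$: $\Delta\|\delta_N\|_\infty\le 2\Delta\|h\|_\infty$ is bounded deterministically, so the intermediate point $\xi$ stays in $\T$ with $(\varphi\circ F)''$ uniformly bounded, and the $C^1$ bound on $g_N$ is uniform as well. Finally, the measurability and joint-law issues in passing to the limit in the pair $(\eta^{N_k}_t,\eta^{N_k}_{t+1})$ are handled by the joint convergence in $(H^{-3})^{T+1}$.
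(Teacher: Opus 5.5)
Your proof is correct, and it takes a genuinely different route from the paper.

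\textbf{The paper's route.} It passes to a Skorohod realization $\tilde\eta^N\to\tilde\eta$ almost surely and introduces the solution $\hat\eta$ of \eqref{eq:etat} started from $\tilde\eta_0$. It then proves the coupling-type stability estimate of Proposition~\ref{prop:identify} in the full $H^{-3}$ norm. Taking the supremum over all $\varphi\in H^3$ there requires three ingredients:
\begin{itemize}
\item the $H^3\to H^3$ bound of Lemma~\ref{lem:H3}, which is where the lower bound $|F'|\ge c$ from \textbf{(A1.3)} enters;
\item the estimate $\mathbb{E}W_1^2(\mu_N(t),\bar\rho_t)\le C\mathrm{e}^{Ct}/N$ from Theorem~\ref{thm:Dobrushin} and Lemma~\ref{lem:expectation-Wasserstein};
\item uniform integrability, to get $\mathbb{E}\|\tilde\eta^N_0-\tilde\eta_0\|_{H^{-3}}\to0$.
\end{itemize}

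\textbf{Your route.} You identify the limit one test function at a time, using $\mathbb{E}|\mathcal{E}^N_t(\varphi)|\to0$ and the continuous mapping theorem for the linear functional $\eta\mapsto\langle\varphi,\eta_{t+1}\rangle-\langle Q_{t,t+1}\varphi,\eta_t\rangle$. This needs only $Q_{t,t+1}\varphi\in H^3$ for smooth $\varphi$, which the chain rule gives with no lower bound on $|F'|$. It also avoids Skorohod, uniform integrability, and the paper's step of asserting that the Skorohod copy $\tilde\mu_N$ still obeys the particle dynamics. Your residual $\mathcal{E}^N_t(\varphi)$ is exactly the paper's $Z_t(\varphi)$ from the proof of Theorem~\ref{thm:quantitative}, so your argument is the qualitative version of that later proof. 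What the paper buys in exchange is the norm-level estimate \eqref{eq:identify}, which, as the paper itself remarks, yields no rate anyway.

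\textbf{Two small points.}
\begin{itemize}
\item Replacing the $W_1$ bound by Lemma~\ref{lem:bound} with $\alpha=1$ plus Theorem~\ref{thm:main} is valid. However, your statement that $W_1$ bounds are ``not available'' is inaccurate. Only almost-sure bounds are unavailable; the $L^2$ bound on $W_1$ above would serve equally well in both of your remainder estimates.
\item In the density step, record $\|Q_{t,t+1}\varphi\|_{H^3}\le C\|\varphi\|_{C^3}$, with $C$ depending on $\|F\|_{C^3}$, $\|h\|_{C^3}$ and $\Delta$. Then take the countable family dense in $C^3(\T)$, for example trigonometric polynomials with rational coefficients, so the almost-sure identity extends to all test functions simultaneously.
\end{itemize}
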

Before giving the proof of Theorem~\ref{thm:solution}, we first use it to complete the proof of Theorem~\ref{thm:CLT}.
\begin{proof}[Proof of Theorem \ref{thm:CLT}]
    We first assume that $\alpha\in(1/2,3]$. By Theorem \ref{thm:main} and Lemma \ref{lem:bound}, the relative entropy $H(\rho_{N,t}|\bar{\rho}_t^{\otimes N})$, and thus $\mathbb{E}\Vert\eta_t^N\Vert_{H^{-\alpha}}^2$, are uniformly bounded. So by the same arguments as Corollary \ref{cor:tightconvergence}, we can show the tightness of laws of $\{\eta^N\}$ in $(H^{-\alpha}(\T))^{T+1}$. By the Prohorov's Theorem again, for every subsequence of $\{\eta^N\}$, there exists a further subsequence converging in distribution to some random element $\tilde{\eta}$ in $(H^{-\alpha}(\T))^{T+1}$. Since $(H^{-\alpha}(\T))^{T+1}$ is embedded into $(H^{-3}(\T))^{T+1}$, $\{\eta^N\}$ also converges in distribution to $\tilde{\eta}$ in $(H^{-3}(\T))^{T+1}$. Thus $\tilde{\eta}$ is a solution to \eqref{eq:etat} by Theorem \ref{thm:solution}. By Corollary \ref{cor:tightconvergence}, we have $\tilde{\eta}_0\overset{d}{=}\eta_0$. Hence $\tilde{\eta}\overset{d}{=}\eta$ by the uniqueness of solution to \eqref{eq:etat}. Therefore, $\{\eta^N\}$ converges in distribution to $\eta$ in $(H^{-\alpha}(\T))^{T+1}$ for $\alpha\in(1/2,3]$, and hence for $\alpha\in(3,+\infty)$ as well. The Gaussianity of $\eta$ comes from the discussion in Section~\ref{sub:time-evolution}.
\end{proof}
Now the only thing we need to do is to prove Theorem~\ref{thm:solution}. Without loss of generality, we assume that $\{\eta^N\}$ converges in distribution to $\tilde{\eta}$ in $(H^{-3}(\T))^{T+1}$. By the Skorohod representation theorem, there exists $\tilde{\eta}^N$ with the same distribution as $\eta^N$ in $(H^{-3}(\T))^{T+1}$, $N\geq 1$, such that $\{\tilde{\eta}^N\}$ converges almost surely. We still denote the almost sure limit by $\tilde{\eta}$. We will use the following proposition to identify $\tilde{\eta}$ with the solution to \eqref{eq:etat} starting from initial data $\tilde{\eta}_0$.
\begin{proposition}\label{prop:identify}
    Let $\hat{\eta}=(\hat{\eta}_t)_{t=0}^{T}$ be the solution to \eqref{eq:etat} with initial condition $\hat{\eta}_0=\tilde{\eta}_0$. Then there exists a constant $C>0$, such that
    \begin{equation}\label{eq:identify}
        \mathbb{E}\Vert\tilde{\eta}_t^N-\hat{\eta}_t\Vert_{H^{-3}}\leq\mathrm{e}^{Ct}\left(\mathbb{E}\Vert\tilde{\eta}_0^N-\tilde{\eta}_0\Vert_{H^{-3}}+\frac{1}{\sqrt{N}}\right),\quad t=0,\dots,T.
    \end{equation}
\end{proposition}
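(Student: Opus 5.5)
We prove \eqref{eq:identify} by a one-step recursion. We compare the exact evolution of $\tilde{\eta}^N$, namely the identity preceding \eqref{eq:weak-etatN} (which holds pathwise for $\eta^N$ and hence, being a measurable relation between consecutive components, for $\tilde{\eta}^N$ too), with the linear evolution $\langle\varphi,\hat{\eta}_{t+1}\rangle=\langle Q_{t,t+1}\varphi,\hat{\eta}_t\rangle$. For $\varphi\in H^3$ with $\|\varphi\|_{H^3}\leq 1$ we write
\begin{align*}
\langle\varphi,\tilde{\eta}_{t+1}^N-\hat{\eta}_{t+1}\rangle
=&\langle Q_{t,t+1}\varphi,\tilde{\eta}_t^N-\hat{\eta}_t\rangle
+\langle\varphi\circ F_{\mu_N(t)}-\varphi\circ F_{\bar{\rho}_t},\tilde{\eta}_t^N\rangle\\
&+\sqrt{N}\,\mathcal{R}_t^N(\varphi),
\end{align*}
where $\mathcal{R}_t^N(\varphi)$ is the second-order Taylor remainder $\frac{1}{2}\langle(\varphi\circ F)''(\xi)(\phi_{\mu_N(t)}-\phi_{\bar{\rho}_t})^2,\bar{\rho}_t\rangle$. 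Taking the supremum over $\varphi$ and then expectations, we aim for
$$\mathbb{E}\|\tilde{\eta}_{t+1}^N-\hat{\eta}_{t+1}\|_{H^{-3}}\leq C\,\mathbb{E}\|\tilde{\eta}_t^N-\hat{\eta}_t\|_{H^{-3}}+\frac{C}{\sqrt{N}},$$
after which iteration gives the claim.

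\textbf{Step 1 (linear term).} We show $Q_{t,t+1}$ is bounded on $H^3$. The first part, $\varphi\circ F_{\bar{\rho}_t}$, is controlled in $C^3$ and hence in $H^3$ by the chain rule, using $F\in C^3$ and $h\in C^3$ from \textbf{(A1.3)}. The catch is that $\varphi\in H^3$ is only $C^{2,1/2}$, so we should really work with the $\sum_{j\le 3}\|\cdot^{(j)}\|_{L^2}$ norm of Proposition~\ref{prop:Hn}. Here the change of variables is bounded because $|F'|\ge c$ and $\phi_{\bar\rho_t}'\ge 1/2$ for small $\Delta$, so composition with $F_{\bar\rho_t}$ is bounded on $H^3$. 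The integral part $y\mapsto\Delta\int(\varphi\circ F)'(\phi_{\bar{\rho}_t}(x))\bar{\rho}_t(x)h(x,y)\dd x$ has $C^3$ norm bounded by $\|h\|_{C^3}\|\varphi\|_{C^1}$, which is at most $C\|\varphi\|_{H^3}$. Together these give $|\langle Q_{t,t+1}\varphi,\tilde{\eta}^N_t-\hat{\eta}_t\rangle|\le C\|\tilde{\eta}^N_t-\hat{\eta}_t\|_{H^{-3}}$.

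\textbf{Step 2 (coefficient-perturbation term).} Since $\varphi\circ F_{\mu_N(t)}-\varphi\circ F_{\bar{\rho}_t}$ depends on $\phi_{\mu_N(t)}-\phi_{\bar{\rho}_t}=\Delta h\ast(\mu_N(t)-\bar{\rho}_t)$, its $H^{\alpha}$ norm for some $\alpha\in(1/2,1]$ is bounded by $C\|h\ast(\mu_N(t)-\bar\rho_t)\|_{C^1}$. Using $h\in C^3$ and duality, this is at most $C\|\mu_N(t)-\bar{\rho}_t\|_{H^{-\alpha}}=CN^{-1/2}\|\tilde{\eta}_t^N\|_{H^{-\alpha}}$. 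The term is then at most $CN^{-1/2}\|\tilde{\eta}_t^N\|_{H^{-\alpha}}^2$, and Lemma~\ref{lem:bound} with Theorem~\ref{thm:main} (so that $H(\rho_{N,t}|\bar\rho_t^{\otimes N})\le C$ under \textbf{(A3)}) gives expectation $O(N^{-1/2})$. The bounds depend only on laws, so they transfer to $\tilde\eta^N$.

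\textbf{Step 3 (remainder).} We bound $\sqrt{N}|\mathcal{R}_t^N(\varphi)|\le C\sqrt N\|(\varphi\circ F)''\|_{L^\infty}\|h\ast(\mu_N(t)-\bar\rho_t)\|_{L^\infty}^2\le CN^{-1/2}\|\tilde\eta^N_t\|_{H^{-\alpha}}^2$, again $O(N^{-1/2})$ in expectation by Lemma~\ref{lem:bound}. This uses $\|\varphi\|_{C^2}\le C\|\varphi\|_{H^3}$, which is why $\alpha_0=3$ is chosen.

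The main obstacle is Step 1. We must verify carefully that $Q_{t,t+1}$ maps $H^3$ to $H^3$ with a constant uniform in $t\le T$, and that the terms in Step 2 are measurable in $\tilde\eta^N$ so that the pathwise identity for $\eta^N$ transfers to the Skorohod copies. For the latter we would note that $\mu_N(t)=\bar\rho_t+N^{-1/2}\eta^N_t$ is a deterministic function of $\eta^N_t$.
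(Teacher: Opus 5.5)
Your proposal is correct. The linear part matches the paper: your Step 1 is Lemma~\ref{lem:H3} for $\varphi\circ F_{\bar\rho_t}$ plus the elementary $C^3$ bound on the integral part of $Q_{t,t+1}\varphi$ (the paper's terms $\II$ and $\III$). Your two error terms are the paper's $\I$ and $\IV$.

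The genuine difference is how you make those error terms small in expectation. The paper bounds both by $C\sqrt{N}\,W_1^2(\tilde\mu_N(t),\bar\rho_t)\|\varphi\|_{H^3}$. It uses Kantorovich--Rubinstein duality for $\I$ and $\|h\ast\nu\|_{L^\infty}\le\|\partial_2 h\|_{L^\infty}W_1$ for $\IV$. It then reduces to time $0$ by the pathwise Dobrushin estimate (Theorem~\ref{thm:Dobrushin}) and concludes with the Bobkov--Ledoux bound of Lemma~\ref{lem:expectation-Wasserstein}, where $J_2(\bar\rho_0)<\infty$ because $\bar\rho_0\ge\kappa$.

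You instead bound both terms by $CN^{-1/2}\|\tilde\eta_t^N\|_{H^{-\alpha}}^2$ with $\alpha\in(1/2,1]$, via $H^{\alpha}$--$H^{-\alpha}$ duality. You then control $\mathbb{E}\|\eta_t^N\|_{H^{-\alpha}}^2$ at each time directly by Lemma~\ref{lem:bound}, combined with Theorem~\ref{thm:main} and $\HN(0)=0$.

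What each route buys:
\begin{itemize}
\item The paper's route keeps Proposition~\ref{prop:identify} independent of the entropy estimate. It uses only deterministic stability of the dynamics plus a sharp i.i.d.\ estimate at time $0$.
\item Your route dispenses with Theorem~\ref{thm:Dobrushin} and Lemma~\ref{lem:expectation-Wasserstein}. It reuses the same bound already needed for tightness and for uniform integrability in Theorem~\ref{thm:solution}. The cost is requiring $\Delta<\Delta_0$ and \textbf{(A2)}, but these are in force in Theorem~\ref{thm:CLT} anyway.
\item Your route is also more robust. The $O(N^{-1})$ rate for $\mathbb{E}W_1^2$ of an i.i.d.\ empirical measure is essentially a one-dimensional phenomenon and degrades in higher dimensions. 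The entropy-based negative Sobolev bound gives $O(N^{-1})$ regardless of dimension, provided $\alpha$ exceeds half the dimension.
\end{itemize}
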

To prove this proposition, we need the following lemma from \cite{BL19}, which is used to justify the heuristic assumption we made in the formal derivation.
\begin{lemma}[Bobkov and Ledoux, {\cite[Theorem 5.1]{BL19}}]\label{lem:expectation-Wasserstein}
    Define
    \begin{equation}\label{eq:def-J2}
        J_2(\bar{\rho}_0):=\int_0^1\frac{P(x)(1-P(x))}{\bar{\rho}_0(x)}\dd x,
    \end{equation}
    where $P(x)=\int_0^x\bar{\rho}_0(y)\dd y$ is the distribution function associated with $\bar{\rho}_0$. Then we have
    \begin{equation}\label{eq:expectation-Wasserstein}
        \mathbb{E}W_2^2(\mu_N(0),\bar{\rho}_0)\leq\frac{2}{N+1}J_2(\bar{\rho}_0).
    \end{equation}
\end{lemma}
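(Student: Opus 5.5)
The plan is to pass from the torus to the real line, write $W_2$ through quantile functions, and reduce the bound to an explicit computation with uniform order statistics.

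\textbf{Step 1: reduction to the line.} Identify $\T$ with $[0,1)$. The torus distance is at most the Euclidean distance of representatives, so $W_2$ on $\T$ is bounded by the $W_2$ distance between the same measures viewed on $[0,1]\subset\R$. It therefore suffices to prove \eqref{eq:expectation-Wasserstein} for measures on the line. There we use the quantile representation
$$W_2^2(\mu_N(0),\bar\rho_0)=\int_0^1|F_N^{-1}(s)-P^{-1}(s)|^2\dd s,$$
where $F_N$ is the empirical distribution function. Let $X_{(1)}\le\dots\le X_{(N)}$ be the order statistics of $x_1(0),\dots,x_N(0)$. Then $F_N^{-1}(s)=X_{(k)}$ for $s\in((k-1)/N,k/N]$. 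Since $\bar\rho_0\ge\kappa>0$ by \textbf{(A2)}, the map $P$ is a $C^1$ increasing bijection. Hence we may write $X_{(k)}=P^{-1}(U_{(k)})$, where the $U_{(k)}$ are uniform order statistics and $U_{(k)}\sim\mathrm{Beta}(k,N-k+1)$.

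\textbf{Step 2: a pointwise bound for the quantile increments.} Put $I(r):=\bar\rho_0(P^{-1}(r))$, so that $(P^{-1})'(r)=1/I(r)$. By Cauchy--Schwarz, for $u,s\in[0,1]$,
$$|P^{-1}(u)-P^{-1}(s)|^2=\Big|\int_s^u\frac{\dd r}{I(r)}\Big|^2\le|u-s|\,\Big|\int_s^u\frac{\dd r}{I(r)^2}\Big|.$$
Plugging this in with $u=U_{(k)}$ and taking expectations gives
$$\mathbb{E}W_2^2\le\int_0^1\frac{w_N(r)}{I(r)^2}\dd r,$$
with the explicit weight
$$w_N(r)=\sum_{k=1}^N\int_{(k-1)/N}^{k/N}\mathbb{E}\big[|U_{(k)}-s|\,\mathbf 1\{r\text{ lies between }s\text{ and }U_{(k)}\}\big]\dd s.$$
This step uses Fubini, which is legitimate because everything is nonnegative.

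\textbf{Step 3: evaluating the weight (the main obstacle).} The key step, where I expect most of the work, is to show that
$$w_N(r)\le\frac{2}{N+1}\,r(1-r).$$
The expectation is an explicit integral against Beta densities. The most promising route is to split according to whether $s<r<U_{(k)}$ or $U_{(k)}<r<s$. The indicator then becomes $\mathbb{P}(U_{(k)}>r)$-type tails, which equal binomial sums $\mathbb{P}(\mathrm{Bin}(N,r)<k)$. The $s$-integral over each block $((k-1)/N,k/N]$ is elementary. After summing over $k$, the binomial identities $\sum_k\mathbb{P}(\mathrm{Bin}(N,r)\ge k)=Nr$ and their second-moment analogues should collapse the sum to a multiple of $r(1-r)/(N+1)$.

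If the exact constant resists, I would fall back on the cruder bound $|u-s|\le|u-r|+|r-s|$. Combined with $\mathbb{E}(U_{(k)}-r)^2$-type moments of Beta variables, this already yields $C\,r(1-r)/(N+1)$, which is all that is used later. The sharp constant $2$ is Bobkov--Ledoux's and would come from doing the Beta integrals exactly.

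\textbf{Step 4: change of variables.} Finally, substitute $r=P(x)$, so that $\dd r=\bar\rho_0(x)\dd x$ and $I(r)=\bar\rho_0(x)$. This turns $\int_0^1 r(1-r)I(r)^{-2}\dd r$ into
$$\int_0^1\frac{P(x)(1-P(x))}{\bar\rho_0(x)}\dd x=J_2(\bar\rho_0),$$
which gives \eqref{eq:expectation-Wasserstein}. Under \textbf{(A2)} this quantity is finite, since $J_2\le 1/(4\kappa)$.
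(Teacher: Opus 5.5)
\textbf{Verdict: the approach is viable, but the key step is missing.} Your Steps 1, 2 and 4 are correct: the torus-to-line reduction, the order-statistics form of the quantile coupling, the Cauchy--Schwarz bound against $I^{-2}$, Tonelli, and the substitution $r=P(x)$. The paper offers no argument to compare with; it simply quotes Bobkov--Ledoux.

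\textbf{The gap is Step 3.} The inequality $w_N(r)\le\frac{2}{N+1}r(1-r)$ is the entire quantitative content of the lemma, and you only predict that binomial identities ``should collapse'' the sum. That prediction is also inaccurate. Since $r$ cuts one of the blocks $((k-1)/N,k/N]$, $w_N(r)$ is not a multiple of $r(1-r)/(N+1)$; it depends on the fractional part $\theta:=Nr-\lfloor Nr\rfloor$. Second-moment identities for $\mathrm{Bin}(N,r)$ only handle part of it.

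\textbf{The fallback does not close the gap either.} When $r$ lies between $s$ and $u$, $|u-s|=|u-r|+|r-s|$ holds with equality, so it gains nothing; the difficulty is the indicator. Controlling that indicator by second moments (Chebyshev) leads to $\int\min\bigl(|r-s|,\sigma_s^2/|r-s|\bigr)\dd s$ with $\sigma_s^2\approx s(1-s)/N$. This carries an extra factor of order $\log N$. Moreover, as $r\to0$ the factor $r$ is not produced by moment bounds centred at $s$; it has to come from the binomial tails $\mathbb{P}(U_{(k)}<r)=\mathbb{P}(\mathrm{Bin}(N,r)\ge k)$. In any case, an unspecified $C$ would not prove the stated constant $\frac{2}{N+1}$, even though it suffices for the later applications.

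\textbf{How to complete Step 3 along your lines.}
\begin{enumerate}
\item For $s<r$ write $\mathbb{E}[(U_{(k)}-s)\mathbf{1}\{U_{(k)}>r\}]=\mathbb{E}(U_{(k)}-r)_++(r-s)\,\mathbb{P}(U_{(k)}>r)$, and symmetrically for $s>r$.
\item Let $M\sim\mathrm{Bin}(N,r)$ be the number of sample points in $[0,r]$, so $\mathbb{P}(U_{(k)}>r)=\mathbb{P}(M\le k-1)$. The $(r-s)$-pieces then sum exactly to $\frac{1}{2N^2}\mathbb{E}(M-Nr)^2=\frac{r(1-r)}{2N}$.
\item For the $(U_{(k)}-r)_\pm$-pieces, condition on $M$: the points below, resp.\ above, $r$ are i.i.d.\ uniform on $[0,r]$, resp.\ $[r,1]$. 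Use the means of uniform order statistics.
\item Then apply $\mathbb{E}[g(M)/(M+1)]=\frac{1}{(N+1)r}\mathbb{E}[g(M'-1)\mathbf{1}\{M'\ge1\}]$ with $M'\sim\mathrm{Bin}(N+1,r)$, and its mirror image for $N-M$. The two sides combine into $\frac{1}{2N(N+1)}\mathbb{E}[D(D-1)]$ with $D=M'-\lfloor Nr\rfloor$. Add the term $\frac{\theta}{N}\,\mathbb{E}\bigl(U_{(\lfloor Nr\rfloor+1)}-r\bigr)$ from the block containing $r$.
\end{enumerate}
Altogether
\[
w_N(r)=\frac{r(1-r)}{N}+\frac{\theta(1-\theta)-r(1-r)}{2N(N+1)}.
\]
So $w_N(r)\le\frac{2}{N+1}r(1-r)$ is equivalent to $\theta(1-\theta)\le(2N-1)r(1-r)$. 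This holds in all cases:
\begin{itemize}
\item For $r<1/N$ one has $\theta=Nr$, and $(2N-1)r(1-r)-Nr(1-Nr)=(N-1)r\bigl(1+(N-1)r\bigr)\ge0$.
\item The case $r>1-1/N$ is symmetric.
\item For $1/N\le r\le 1-1/N$ (so $N\ge2$), $(2N-1)r(1-r)\ge(2N-1)(N-1)/N^2\ge\frac34\ge\theta(1-\theta)$.
\end{itemize}
For $N=1$ this gives $w_1(r)=r(1-r)$, so the weight bound is sharp there. With this computation inserted, your argument is a complete proof of the lemma.
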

The following lemma shows that $\varphi\mapsto\varphi\circ F_{\bar{\rho}_t}$ is a bounded operator in $H^3(\T)$.
\begin{lemma}\label{lem:H3}
    For any $\varphi\in H^3(\T)$, we have
    \begin{equation}\label{eq:H3}
        \Vert\varphi\circ F_{\bar{\rho}_t}\Vert_{H^3}\leq C\Vert\varphi\Vert_{H^3},
    \end{equation}
    where the constant $C>0$ depends on $F,\Delta$ and $\| h\|_{C^3}$.
\end{lemma}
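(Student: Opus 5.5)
The plan is to reduce \eqref{eq:H3} to pointwise bounds on the first three derivatives of $G:=F_{\bar{\rho}_t}=F\circ\phi_{\bar{\rho}_t}$, and then to use the equivalent norm of Proposition~\ref{prop:Hn}. The bound is not obvious only because $\varphi\circ G$ is measured in $L^2$. We therefore need both upper bounds on $G',G'',G'''$ and a lower bound on $|G'|$, so that the change of variables $u=G(y)$ controls $L^2$ norms.

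First, the regularity of $\phi_{\bar{\rho}_t}$. From \eqref{eq:def-phimu},
\[
\phi_{\bar{\rho}_t}^{(k)}(x)=\delta_{k1}+\Delta\int_{\T}\partial_1^k h(x,y)\bar{\rho}_t(y)\dd y,\qquad k=1,2,3,
\]
so $\|\phi_{\bar{\rho}_t}'\|_{L^\infty},\|\phi_{\bar{\rho}_t}''\|_{L^\infty},\|\phi_{\bar{\rho}_t}'''\|_{L^\infty}\le 1+\Delta\|h\|_{C^3}$, uniformly in $t$ and independently of $\bar{\rho}_t$ because $\bar{\rho}_t$ is a probability measure. For $\Delta<\Delta_1$ (Corollary~\ref{cor:push-forward}; we may shrink it so that $\Delta\|h\|_{C^1}\le 1/2$), we also have $\phi_{\bar{\rho}_t}'\ge 1/2$. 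Combining with \textbf{(A1.3)}, $G=F\circ\phi_{\bar{\rho}_t}$ is $C^3$ with $\|G\|_{C^3}\le C(\|F\|_{C^3},\Delta,\|h\|_{C^3})$ and $|G'|\ge c/2>0$. Hence $G$ is a local diffeomorphism of $\T$, i.e.\ a covering map of some degree $d$ with $|d|\le\|G'\|_{L^\infty}$.

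Next, differentiate with the chain rule (Faà di Bruno):
\[
(\varphi\circ G)'=\varphi'(G)G',\qquad
(\varphi\circ G)''=\varphi''(G)(G')^2+\varphi'(G)G'',
\]
\[
(\varphi\circ G)'''=\varphi'''(G)(G')^3+3\varphi''(G)G'G''+\varphi'(G)G'''.
\]
Each term has the form $\varphi^{(j)}(G)\cdot(\text{bounded function})$ for $0\le j\le 3$. It therefore suffices to show $\|\psi\circ G\|_{L^2}\le C\|\psi\|_{L^2}$ for $\psi\in L^2(\T)$. By the change of variables over the covering map,
\[
\int_{\T}|\psi(G(y))|^2\dd y=\int_{\T}|\psi(u)|^2\sum_{y\in G^{-1}(u)}\frac{1}{|G'(y)|}\dd u\le\frac{2|d|}{c}\|\psi\|_{L^2}^2 .
\]
Summing over $j$ and applying Proposition~\ref{prop:Hn} gives \eqref{eq:H3}.

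The only real obstacle is this $L^2$ composition step, since it is where $|F'|\ge c$ is essential: without it, $\varphi\circ G$ need not be controlled by $\|\varphi\|_{L^2}$. A simpler route is available in one dimension, because $H^3(\T)\hookrightarrow C^2$ gives $\|\varphi^{(j)}\|_{L^\infty}\le C\|\varphi\|_{H^3}$ for $j\le 2$. So all terms except $\varphi'''(G)(G')^3$ can be bounded in $L^\infty$, and the covering-map estimate is needed only for $\varphi'''\circ G$. There we can alternatively write $\int|\varphi'''(G)|^2|G'|^6\le\|G'\|_\infty^5\int|\varphi'''(G)|^2|G'|=\|G'\|_\infty^5|d|\,\|\varphi'''\|_{L^2}^2$, which does not even need the lower bound on $|G'|$. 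Either way, the constant depends only on $\|F\|_{C^3}$ (and $c$), $\Delta$ and $\|h\|_{C^3}$.
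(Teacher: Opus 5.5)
Your proof is correct and follows essentially the same route as the paper: an $L^2$ bound for composition with $F_{\bar{\rho}_t}$ by change of variables over the inverse branches (using $|F'|\geq c$ and $\phi_{\bar{\rho}_t}'\geq 1-\Delta\|\partial_1 h\|_{L^\infty}$), then the chain rule up to third order and Proposition~\ref{prop:Hn}. The only inaccuracy is in your closing aside, which claims the $\varphi'''$ term does not need the lower bound on $|G'|$: the identity $\int_{\T}|\psi(G)|^2|G'|\dd y=|d|\,\|\psi\|_{L^2}^2$ holds only because $G$ is a covering map, and in general $|d|$ must be replaced by the counting function $\#G^{-1}(u)$, which need not be bounded, so that variant does use the lower bound after all.
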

\begin{proof}
    Since for $\Delta<\Delta_1\leq\Delta_0$, $\phi_{\bar{\rho}_t}$ is a diffeomorphism on $\T$, if we let 
    $$m:=\int_0^1|F_{\bar{\rho}_t}'(x)|\dd x=\int_0^1|F'(x)|\dd x\in\mathbb{N},$$
    then the inverse of $F_{\bar{\rho}_t}$ will have $m$ branches, denoted by $\tau_1,\dots,\tau_m$. Let $A_i=\tau_i(\T),\ 1\leq i\leq m$, then we have
    \begin{align*}
        \Vert\varphi\circ F_{\bar{\rho}_t}\Vert_{L^2}^2&=\int_{\T}|\varphi\circ F_{\bar{\rho}_t}(x)|^2\dd x\\
        &=\sum_{i=1}^{m}\int_{A_i}|\varphi\circ F_{\bar{\rho}_t}(x)|^2\dd x\\
        &=\sum_{i=1}^{m}\int_{\T}\frac{|\varphi(y)|^2}{|F_{\bar{\rho}_t}'(\tau_i(y)|}\dd y\\
        &\leq\frac{m}{c(1-\Delta\Vert\partial_1h\Vert_{L^{\infty}})}\Vert\varphi\Vert_{L^2}^2.
    \end{align*}
    Since the derivatives of $\varphi\circ F_{\bar{\rho}_t}$ of orders up to $3$ are given by
    \begin{align*}
        (\varphi\circ F_{\bar{\rho}_t})'&=(\varphi'\circ F_{\bar{\rho}_t})\cdot F_{\bar{\rho}_t}',\\
        (\varphi\circ F_{\bar{\rho}_t})''&=(\varphi''\circ F_{\bar{\rho}_t})\cdot(F_{\bar{\rho}_t}')^2+(\varphi'\circ F_{\bar{\rho}_t})\cdot F_{\bar{\rho}_t}''\\
        (\varphi\circ F_{\bar{\rho}_t})'''&=(\varphi'''\circ F_{\bar{\rho}_t})\cdot(F_{\bar{\rho}_t}')^3+3(\varphi''\circ F_{\bar{\rho}_t})\cdot F_{\bar{\rho}_t}'\cdot F_{\bar{\rho}_t}''+(\varphi'\circ F_{\bar{\rho}_t})\cdot F_{\bar{\rho}_t}''',
    \end{align*}
    and $F_{\bar{\rho}_t}$ is $C^3$ according to Assumption \textbf{(A1.3)}, a similar argument as above yields that
    $$\Vert(\varphi\circ F_{\bar{\rho}_t})^{(k)}\Vert_{L^2}^2\leq C\Vert\varphi^{(k)}\Vert_{L^2}^2,\quad k=0,1,2,3.$$
    Therefore, the estimate \eqref{eq:H3} follows from Proposition~\ref{prop:Hn}:
    $$\Vert\varphi\circ F_{\bar{\rho}_t}\Vert_{H^3}^2\leq C\sum_{k=0}^{3}\Vert(\varphi\circ F_{\bar{\rho}_t})^{(k)}\Vert_{L^2}^2\leq C\sum_{k=0}^{3}\Vert\varphi^{(k)}\Vert_{L^2}^2\leq C\Vert\varphi\Vert_{H^3}^2.$$
\end{proof}
Now we are ready to prove Proposition~\ref{prop:identify}, which is the most delicate part in this section.
\begin{proof}[Proof of Proposition~\ref{prop:identify}]
    We first define $\tilde{\mu}_N(t):=\bar{\rho}_t+\tilde{\eta}_t^N/\sqrt{N}$, then since $\bar{\rho}_t$ is not random, we have $\tilde{\mu}_N\overset{d}{=}\mu_N$ in $(H^{-3}(\T))^{T+1}$. Hence $\tilde{\mu}_N(t+1)=(F_{\tilde{\mu}_N(t)})_{\ast}\tilde{\mu}_N(t)$ and consequently 
    $$\tilde{\eta}_{t+1}^N=\sqrt{N}(\tilde{\mu}_{N}(t+1)-\bar{\rho}_{t+1})=\sqrt{N}\left((F_{\tilde{\mu}_N(t)})_{\ast}\tilde{\mu}_N(t)-(F_{\bar{\rho}_t})_{\ast}\bar{\rho}_t\right)$$
    almost surely for $t=0,\dots,T-1$. Thus for any test function $\varphi\in H^3(\T)$, the first equality of \eqref{eq:weak-etatN} and \eqref{eq:weak-etat} still holds true, with $\eta^N,\mu_N$ and $\eta$ replaced by $\tilde{\eta}^N,\tilde{\mu}_N$ and $\hat{\eta}$ respectively. More precisely, we have
    $$\begin{aligned}
        \langle\varphi,\tilde{\eta}_{t+1}^N\rangle=&\langle\varphi\circ F\circ\phi_{\tilde{\mu}_N(t)},\tilde{\eta}_t^N\rangle+\Delta\langle(\varphi\circ F)'(\phi_{\bar{\rho}_t})\cdot h\ast\tilde{\eta}_t^N,\bar{\rho}_t\rangle\\
        &\hspace{0.5cm}+\frac{\Delta^2\sqrt{N}}{2}\langle(\varphi\circ F)''(\xi)\cdot[h\ast(\tilde{\mu}_N(t)-\bar{\rho}_t)]^2,\bar{\rho}_t\rangle
    \end{aligned}$$
    and
    $$\langle\varphi,\hat{\eta}_{t+1}\rangle=\langle\varphi\circ F\circ\phi_{\bar{\rho}_t},\hat{\eta}_t\rangle+\Delta\langle(\varphi\circ F)'(\phi_{\bar{\rho}_t})\cdot h\ast\hat{\eta}_t,\bar{\rho}_t\rangle$$
    for $t=0,\dots,T-1$. Taking the difference, we get
    \begin{align*}
        &\langle\varphi,\tilde{\eta}_{t+1}^N-\hat{\eta}_{t+1}\rangle\\
        =&\langle\varphi\circ F\circ\phi_{\tilde{\mu}_N(t)}-\varphi\circ F\circ\phi_{\bar{\rho}_t},\tilde{\eta}_t^N\rangle+\langle\varphi\circ F\circ\phi_{\bar{\rho}_t},\tilde{\eta}_t^N-\hat{\eta}_t\rangle\\
        &\hspace{0.5cm}+\Delta\langle(\varphi\circ F)'(\phi_{\bar{\rho}_t})\cdot h\ast(\tilde{\eta}_t^N-\hat{\eta}_t),\bar{\rho}_t\rangle+\frac{\Delta^2\sqrt{N}}{2}\langle(\varphi\circ F)''(\xi)\cdot[h\ast(\tilde{\mu}_N(t)-\bar{\rho}_t)]^2,\bar{\rho}_t\rangle\\
        =&\I+\II+\III+\IV.
    \end{align*}
    By the Sobolev embedding theorem $\varphi\in C^2(\T)$ and $\Vert\varphi\Vert_{C^2}\leq C\Vert\varphi\Vert_{H^3}$. And by Lemma~\ref{lem:H3}, $\Vert\varphi\circ F\circ\phi_{\bar{\rho}_t}\Vert_{H^3}\leq C\Vert\varphi\Vert_{H^3}$. Then we estimate the four terms above respectively:
    \begin{align*}
        |\I|&\leq\sqrt{N}W_1(\tilde{\mu}_N(t),\bar{\rho}_t)\cdot\Vert(\varphi\circ F\circ\phi_{\tilde{\mu}_N(t)}-\varphi\circ F\circ\phi_{\bar{\rho}_t})'\Vert_{L^{\infty}}\\
        &=\sqrt{N}W_1(\tilde{\mu}_N(t),\bar{\rho}_t)\cdot\Vert(\varphi\circ F)'(\phi_{\tilde{\mu}_N(t)})\cdot\phi_{\tilde{\mu}_N(t)}'-(\varphi\circ F)'(\phi_{\bar{\rho}_t})\cdot\phi_{\bar{\rho}_t}'\Vert_{L^{\infty}}\\
        &\leq\sqrt{N}W_1(\tilde{\mu}_N(t),\bar{\rho}_t)\big(\Vert\phi_{\tilde{\mu}_N(t)}'\Vert_{L^{\infty}}\cdot\Vert(\varphi\circ F)'(\phi_{\tilde{\mu}_N(t)})-(\varphi\circ F)'(\phi_{\bar{\rho}_t})\Vert_{L^{\infty}}\\
        &\hspace{4cm}+\Vert(\varphi\circ F)'\Vert_{L^{\infty}}\cdot\Vert\phi_{\tilde{\mu}_N(t)}'-\phi_{\bar{\rho}_t}'\Vert_{L^{\infty}}\big)\\
        &\leq C\sqrt{N}W_1(\tilde{\mu}_N(t),\bar{\rho}_t)\big(\Vert(\varphi\circ F)''\Vert_{L^{\infty}}\cdot\Vert\phi_{\tilde{\mu}_N(t)}-\phi_{\bar{\rho}_t}\Vert_{L^{\infty}}+\Vert(\varphi\circ F)'\Vert_{L^{\infty}}\cdot\Vert\phi_{\tilde{\mu}_N(t)}'-\phi_{\bar{\rho}_t}'\Vert_{L^{\infty}}\big)\\
        &\leq C\sqrt{N}W_1(\tilde{\mu}_N(t),\bar{\rho}_t)\Vert\varphi\Vert_{H^3}\big(\Vert h\ast(\tilde{\mu}_N(t)-\bar{\rho}_t)\Vert_{L^{\infty}}+\Vert (\partial_1h)\ast(\tilde{\mu}_N(t)-\bar{\rho}_t)\Vert_{L^{\infty}}\big)\\
        &\leq C\sqrt{N}W_1^2(\tilde{\mu}_N(t),\bar{\rho}_t)\Vert\varphi\Vert_{H^3},\\
        |\II|&\leq\Vert\varphi\circ F\circ\phi_{\bar{\rho}_t}\Vert_{H^3}\cdot\Vert\tilde{\eta}_t^N-\hat{\eta}_t\Vert_{H^{-3}}\\
        &\leq C\Vert\tilde{\eta}_t^N-\hat{\eta}_t\Vert_{H^{-3}}\cdot\Vert\varphi\Vert_{H^3},\\
        |\III|&\leq C\Vert(\varphi\circ F)'\Vert_{L^{\infty}}\cdot\Vert h\ast(\tilde{\eta}_t^N-\hat{\eta}_t)\Vert_{L^{\infty}}\\
        &\leq C\Vert\tilde{\eta}_t^N-\hat{\eta}_t\Vert_{H^{-3}}\cdot\Vert\varphi\Vert_{H^3},\\
        |\IV|&\leq C\sqrt{N}\Vert(\varphi\circ F)''\Vert_{L^{\infty}}\cdot\Vert h\ast(\tilde{\mu}_N(t)-\bar{\rho}_t)\Vert_{L^{\infty}}^2\\
        &\leq C\sqrt{N}W_1^2(\tilde{\mu}_N(t),\bar{\rho}_t)\Vert\varphi\Vert_{H^3}.
    \end{align*}
    Therefore,
    $$|\langle\varphi,\tilde{\eta}_{t+1}^N-\hat{\eta}_{t+1}\rangle|\leq C(\Vert\tilde{\eta}_t^N-\hat{\eta}_t\Vert_{H^{-3}}+\sqrt{N}W_1^2(\tilde{\mu}_N(t),\bar{\rho}_t))\Vert\varphi\Vert_{H^3},\quad\forall\varphi\in H^3(\T),$$
    implying that
    \begin{equation}\label{eq:H-3-one-step}
        \Vert\tilde{\eta}_{t+1}^N-\hat{\eta}_{t+1}\Vert_{H^{-3}}\leq C(\Vert\tilde{\eta}_t^N-\hat{\eta}_t\Vert_{H^{-3}}+\sqrt{N}W_1^2(\tilde{\mu}_N(t),\bar{\rho}_t)),\quad t=0,\dots,T-1.
    \end{equation}
    By the definition of Wasserstein distances and the Cauchy-Schwarz inequality, we have $W_1(\mu,\nu)\leq W_2(\mu,\nu)$ for any two probability measures $\mu$ and $\nu$. And since $\bar{\rho}_0$ is strictly positive, we have $J_2(\bar{\rho}_0)<\infty$. Thus applying Theorem~\ref{thm:Dobrushin} and Lemma~\ref{lem:expectation-Wasserstein}, we estimate that
    $$\mathbb{E}W_1^2(\tilde{\mu}_N(t),\bar{\rho}_t)\leq\mathrm{e}^{Ct}\mathbb{E}W_1^2(\tilde{\mu}_N(0),\bar{\rho}_0)\leq\mathrm{e}^{Ct}\mathbb{E}W_2^2(\tilde{\mu}_N(0),\bar{\rho}_0)=\mathrm{e}^{Ct}\mathbb{E}W_2^2(\mu_N(0),\bar{\rho}_0)\leq\frac{C\mathrm{e}^{Ct}}{N}.$$
    Hence taking expectation in \eqref{eq:H-3-one-step}, we obtain
    $$\mathbb{E}\Vert\tilde{\eta}_{t+1}^N-\hat{\eta}_{t+1}\Vert_{H^{-3}}\leq C\left(\mathbb{E}\Vert\tilde{\eta}_t^N-\hat{\eta}_t\Vert_{H^{-3}}+\frac{\mathrm{e}^{Ct}}{\sqrt{N}}\right),t=0,\dots,T-1.$$
    Then an induction argument yields the desired estimate \eqref{eq:identify}.
\end{proof}
With Proposition~\ref{prop:identify} in hand, we are not too far from the destination. Using the bound \eqref{eq:bound-negative-Sobolev} again with the properties of uniform integrability, we complete our proof as follows.
\begin{proof}[Proof of Theorem~\ref{thm:solution}]
    For simplicity, we denote $Y_N(t):=\Vert\tilde{\eta}_t^N-\tilde{\eta}_t\Vert_{H^{-3}}$. Then Lemma~\ref{lem:bound} and Theorem~\ref{thm:main} show that $\mathbb{E}Y_N^2(t)$ are uniformly bounded, which implies the uniform integrability of $Y_N(t)$. Together with the fact that $\tilde{\eta}_t^N$ converges to $\tilde{\eta}_t$ in $H^{-3}(\T)$ almost surely, we conclude that
    $$\lim_{N\to\infty}\mathbb{E}Y_N(t)=\lim_{N\to\infty}\mathbb{E}\Vert\tilde{\eta}_t^N-\tilde{\eta}_t\Vert_{H^{-3}}=0,\quad t=0,\dots,T.$$
    In particular, we have $\lim_{N\to\infty}\mathbb{E}\Vert\tilde{\eta}_0^N-\tilde{\eta}_0\Vert_{H^{-3}}=0$. Thus by Proposition~\ref{prop:identify}, we have
    $$\lim_{N\to\infty}\mathbb{E}\Vert\tilde{\eta}_t^N-\hat{\eta}_t\Vert_{H^{-3}}=0,\quad t=0,\dots,T,$$
    which forces that $\tilde{\eta}_t=\hat{\eta}_t$ almost surely for $t=0,\dots,T$. Thus $\tilde{\eta}$ must be a solution to \eqref{eq:etat}.
\end{proof}
\subsection{Quantitative estimate}\label{sub:quantitative}
If we can check the proof of Proposition~\ref{prop:identify} carefully, it can be seen that the estimate \eqref{eq:identify} still holds true with $\tilde{\eta}^N$ and $\hat{\eta}$ replaced by $\eta^N$ and $\eta$ respectively. More explicitly, we have
\begin{equation}\label{eq:quantitative2}
    \mathbb{E}\Vert\eta_t^N-\eta_t\Vert_{H^{-3}}\leq\mathrm{e}^{Ct}\left(\mathbb{E}\Vert\eta_0^N-\eta_0\Vert_{H^{-3}}+\frac{1}{\sqrt{N}}\right),\quad t=0,\dots,T.
\end{equation}
It seems that we have obtained a quantitative result. However, since $\eta_0^N$ converges to $\eta_0$ only in distribution, we have no information about the joint distribution of $\eta_0^N$ and $\eta_0$. So it is impossible and actually unreasonable to calculate $\mathbb{E}\Vert\eta_0^N-\eta_0\Vert_{H^{-3}}$. And in fact, one can check that $\mathbb{E}\Vert\eta_0^N-\eta_0^{N+p}\Vert_{H^{-3}}^2$ does not converge to $0$ as $N\to\infty$. Thus there does not exist a realization $\tilde{\eta}_0$ with the same distribution as $\eta_0$, such that $\mathbb{E}\Vert\eta_0^N-\tilde{\eta}_0\Vert_{H^{-3}}\to 0$ as $N\to\infty$. Therefore, the estimate \eqref{eq:quantitative2} cannot give any rate for the convergence of $\eta^N$ to $\eta$. To get a convergence rate, we can only estimate the difference between the expectations of some functions related to $\eta^N$ and $\eta$ respectively, which does not depend on the joint distribution of $\eta^N$ and $\eta$. This idea gives rise to the estimate \eqref{eq:quantitative} in Theorem~\ref{thm:quantitative}.
\begin{proof}[Proof of Theorem~\ref{thm:quantitative}]
    From the first equality of \eqref{eq:weak-etatN}, we know that
    \begin{align*}
        \langle\varphi_i,\eta_{t+1}^N\rangle=&\langle\varphi_i\circ F\circ\phi_{\mu_N(t)},\eta_t^N\rangle+\Delta\langle(\varphi_i\circ F)'(\phi_{\bar{\rho}_t})\cdot h\ast\eta_t^N,\bar{\rho}_t\rangle\\
        &\hspace{0.5cm}+\frac{\Delta^2\sqrt{N}}{2}\langle(\varphi_i\circ F)''(\xi)\cdot[h\ast(\mu_N(t)-\bar{\rho}_t)]^2,\bar{\rho}_t\rangle\\
        =&\langle\varphi_i\circ F\circ\phi_{\bar{\rho}_t},\eta_t^N\rangle+\Delta\langle(\varphi_i\circ F)'(\phi_{\bar{\rho}_t})\cdot h\ast\eta_t^N,\bar{\rho}_t\rangle\\
        &\hspace{0.5cm}+\langle\varphi_i\circ F\circ\phi_{\mu_N(t)}-\varphi_i\circ F\circ\phi_{\bar{\rho}_t},\eta_t^N\rangle\\
        &\hspace{0.5cm}+\frac{\Delta^2\sqrt{N}}{2}\langle(\varphi_i\circ F)''(\xi)\cdot[h\ast(\mu_N(t)-\bar{\rho}_t)]^2,\bar{\rho}_t\rangle\\
        =&\langle Q_{t,t+1}\varphi_i,\eta_t^N\rangle+Z_t(\varphi_i),\quad 1\leq i\leq m,0\leq t\leq T-1,
    \end{align*}
    where $Z_t(\varphi_i)$ denotes the error terms:
    $$Z_t(\varphi_i)=\langle\varphi_i\circ F\circ\phi_{\mu_N(t)}-\varphi_i\circ F\circ\phi_{\bar{\rho}_t},\eta_t^N\rangle+\frac{\Delta^2\sqrt{N}}{2}\langle(\varphi_i\circ F)''(\xi)\cdot[h\ast(\mu_N(t)-\bar{\rho}_t)]^2,\bar{\rho}_t\rangle.$$
    By the same argument used to estimate $\I$ and $\IV$ in the proof of Proposition~\ref{prop:identify}, we can show that
    $$|Z_t(\varphi_i)|\leq C\sqrt{N}W_1^2(\mu_N(t),\bar{\rho}_t)\|\varphi_i\|_{C^2}.$$
    Thus similarly by Theorem~\ref{thm:Dobrushin} and Lemma~\ref{lem:expectation-Wasserstein}, we have
    $$\mathbb{E}|Z_t(\varphi_i)|\leq\frac{C\mathrm{e}^{Ct}}{\sqrt{N}}\|\varphi_i\|_{C^2}.$$
    By induction, we have for $1\leq i\leq m$ and $0\leq t\leq T$ that
    $$\langle\varphi_i,\eta_t^N\rangle=\langle Q_{0,t}\varphi_i,\eta_0^N\rangle+\sum_{s=0}^{t-1}Z_s(Q_{s+1,t}\varphi_i).$$
    And recall that $\langle\varphi_i,\eta_t\rangle=\langle Q_{0,t}\varphi_i,\eta_0\rangle$, so we can estimate that
    \begin{align*}
        &\left|\mathbb{E}\Psi\left(\langle\varphi_1,\eta_t^N\rangle,\dots,\langle\varphi_m,\eta_t^N\rangle\right)-\mathbb{E}\Psi\left(\langle\varphi_1,\eta_t\rangle,\dots,\langle\varphi_m,\eta_t\rangle\right)\right|\\
        =&\Bigg|\mathbb{E}\Psi\left(\langle Q_{0,t}\varphi_1,\eta_0^N\rangle+\sum_{s=0}^{t-1}Z_s(Q_{s+1,t}\varphi_1),\dots,\langle Q_{0,t}\varphi_m,\eta_0^N\rangle+\sum_{s=0}^{t-1}Z_s(Q_{s+1,t}\varphi_m)\right)\\
        &\hspace{0.5cm}-\mathbb{E}\Psi\left(\langle Q_{0,t}\varphi_1,\eta_0\rangle,\dots,\langle Q_{0,t}\varphi_m,\eta_0\rangle\right)\Bigg|\\
        \leq&\left|\mathbb{E}\Psi\left(\langle Q_{0,t}\varphi_1,\eta_0^N\rangle,\dots,\langle Q_{0,t}\varphi_m,\eta_0^N\rangle\right)-\mathbb{E}\Psi\left(\langle Q_{0,t}\varphi_1,\eta_0\rangle,\dots,\langle Q_{0,t}\varphi_m,\eta_0\rangle\right)\right|\\
        &\hspace{0.5cm}+C\sum_{i=1}^{m}\sum_{s=0}^{t-1}\mathbb{E}|Z_s(Q_{s+1,t}\varphi_i)|.
    \end{align*}
    Note that by the definition of time evolution operators, we have $\| Q_{t,t+1}\varphi\|_{C^2}\leq C\|\varphi\|_{C^2}$ and thus $\|Q_{s,t}\varphi\|_{C^2}\leq\mathrm{e}^{C(t-s)}\|\varphi\|_{C^2}$. Hence,
    $$\sum_{i=1}^{m}\sum_{s=0}^{t-1}\mathbb{E}|Z_s(Q_{s+1,t}\varphi_i)|\leq\sum_{i=1}^{m}\sum_{s=0}^{t-1}\frac{C\mathrm{e}^{Cs}}{\sqrt{N}}\| Q_{s+1,t}\varphi_i\|_{C^2}\leq\frac{C\mathrm{e}^{Ct}}{\sqrt{N}}.$$
    For the first term, it converges to $0$ by the classical multivariate Central Limit Theorem, and the convergence rate is given by the multivariate Berry-Esseen/Stein estimate\cite{Got91}:
    \begin{align*}
        &\left|\mathbb{E}\Psi\left(\langle Q_{0,t}\varphi_1,\eta_0^N\rangle,\dots,\langle Q_{0,t}\varphi_m,\eta_0^N\rangle\right)-\mathbb{E}\Psi\left(\langle Q_{0,t}\varphi_1,\eta_0\rangle,\dots,\langle Q_{0,t}\varphi_m,\eta_0\rangle\right)\right|\\
        \leq&\frac{C\|\Psi\|_{C_b^3}}{\sqrt{N}}\sum_{i=1}^{m}\langle|Q_{0,t}\varphi_i|,\bar{\rho}_0\rangle^3\leq\frac{C\mathrm{e}^{Ct}}{\sqrt{N}}.
    \end{align*}
    Combining the estimates above, the desired quantitative result \eqref{eq:quantitative} follows.
\end{proof}
\section*{\bf Acknowledgements}
The author would like to thank his advisor, Zhenfu Wang, for many helpful discussions on the applications of relative entropy method on both propagation of chaos and Gaussian fluctuations, as well as for valuable guidance on the writing of this paper. The author also thanks Matteo Tanzi for inspiring talks and discussions on globally coupled maps and self-consistent operators. This work was supported by the National Key R\&D Program of China (Project No.~2024YFA1015500) and by the NSFC (Grant Nos.~12595282 and 12171009).

\bibliographystyle{abbrv}
\bibliography{reference}

\end{document}